\newtheorem{theorem}{Theorem}[section]
\newtheorem{lemma}[theorem]{Lemma}
\theoremstyle{definition}
\newtheorem{definition}[theorem]{Definition}
\newtheorem{remark}[theorem]{Remark}
\title[{\textls[-40]{Fractional SIR model of a reaction-diffusion with Caputo derivative}}]{Fractional 
differential equations of a reaction-diffusion SIR model involving the Caputo-fractional 
time-derivative and a nonlinear diffusion operator}
\author[Achraf Zinihi, Moulay Rchid Sidi Ammi and Delfim F. M. Torres]{}
\subjclass{Primary: 26A33, 49J20; Secondary:  47J05, 34K37, 65D15.}
\keywords{Fractional differential equations, epidemic model, 
Caputo fractional derivatives, $p$-Laplacian operator, 
optimal control, numerical approximations.}
\thanks{$^*$Corresponding author: Achraf Zinihi}
\begin{document}
\maketitle

\centerline{\scshape
Achraf Zinihi$^{{\href{mailto:a.zinihi@edu.umi.ac.ma;achrafzinihi99@gmail.com}{\textrm{\Letter}}}*1}$,
Moulay Rchid Sidi Ammi$^{{\href{mailto:rachidsidiammi@yahoo.fr}{\textrm{\Letter}}}1}$}
\centerline{\scshape and Delfim F. M. Torres$^{{\href{mailto:delfim@ua.pt}{\textrm{\Letter}}}2}$}

\medskip

{\footnotesize
\centerline{$^1$Department of Mathematics, MAIS Laboratory,} \centerline{AMNEA Group, Faculty of Sciences and Technics,}
\centerline{Moulay Ismail University of Meknes, Errachidia 52000, Morocco}}

\medskip

{\footnotesize
\centerline{$^2$Center for Research and Development in Mathematics and Applications (CIDMA),}
\centerline{Department of Mathematics, University of Aveiro, 3810-193 Aveiro, Portugal}}

\bigskip

\centerline{(Communicated by Arran Fernandez)}


\begin{abstract}
The main aim of this study is to analyze a fractional parabolic SIR epidemic 
model of a reaction-diffusion, by using the nonlocal Caputo fractional 
time-fractional derivative and employing the $p$-Laplacian operator.
The immunity is imposed through the vaccination program, which is regarded 
as a control variable. Finding the optimal control pair that reduces the number 
of sick people, the associated vaccination, and treatment expenses across 
a constrained time and space is our main study. The existence and uniqueness 
of the nonnegative solution for the spatiotemporal SIR model are established.
It is also demonstrated that an optimal control exists.
In addition, we obtain a description of the optimal control in terms of state 
and adjoint functions. Then, the optimality system is resolved by a discrete 
iterative scheme that converges after an appropriate test, similar to the 
forward-backward sweep method. Finally, numerical approximations are given 
to show the effectiveness of the proposed control program, which provides 
meaningful results using different values of the fractional order and $p$, 
respectively the order of the Caputo derivative and the $p$-Laplacian operators.
\end{abstract}


\section{Introduction}
\label{S1}

Fractional calculus is an ancient subject that arose as a result of a relevant
question that l'H\^opital asked Leibniz in a letter about the possible meaning
of a derivative of order $0.5$ \cite{Leibniz1849letter}. Recently, many
researchers have focused their attention on modeling real phenomena using fractional
derivatives. The dynamics of these problems were modeled and studied using the concept
of fractional derivatives. Such problems arise, for example, in biology, epidemiology,
physics, ecology, engineering, and various other areas of applied science
\cite{kilbas2006,Milici2019}.

In the literature of fractional calculus, systems that use fractional derivatives
can exhibit a more believable behavior \cite{Gorenflo1999, Ngoc2022, Torres2024}. Fractional
derivatives have many different definitions \cite{Gorenflo1999}, the most well-known
being the Riemann--Liouville and the Caputo fractional derivatives. The first is not always
relevant for modeling biological and physical systems, due to the fact
that the fractional derivative of a constant is different from zero. Moreover,
initial conditions of Cauchy problems are represented through fractional derivatives.
In contrast, Caputo derivatives offer an alternative where initial
conditions are presented as in the classical integer-order case and a constant's
fractional derivative is zero \cite{Diethelm2012,Gorenflo1999}.

Fractional derivatives that possess a nonsingular kernel have also generated
a large interest from researchers. This is due to nonsingular memory and
also for not obeying the algebraic standards of bonding and switching
the Mittag--Leffler function. The Caputo--Fabrizio fractional time-derivative
in the Caputo sense is sometimes preferred for modeling biological, physical,
or chemical dynamic systems, by describing diffusion and heterogeneity phenomena
at various scales in a clear and concise manner \cite{Khan2021, Owolabi2021}.

Mathematical models play an important role in modeling epidemiological phenomena
to describe the dynamic behavior of these phenomena, analyzing them, and combating
their spread, especially infectious diseases 
\cite{Bailey1975, Elgart2023, Rao2021, Zinihi2024MM}.
The SIR epidemic model presented by \cite{Kermack1927} is the first
mathematical model and one of the most significant pandemic models
since its birth in 1927. The population in this model is divided
into three compartments for the pathological condition.
In the first category, susceptible individuals $S$ represent
those who have not yet contracted the disease but are at risk of doing so.
Infected individuals $I$ describe the persons who have contracted
the disease and are able to spread it to those who are susceptible.
The third and last compartment are the recovered, denoted by $R$, that represents
individuals who have acquired immunity and become immune to the disease.
Recently, the classic SIR model was adopted to design many
infectious diseases, such as those spread from coronaviruses
\cite{Alshomrani2021,Calvetti2023,Ding2022,Ji2012,Mouaouine2018}.

A series of statistical studies, in several parts of the world,
have confirmed that the movement of individuals has a significant negative
impact on the spread of diseases. One can support this idea by mentioning
the SARS-CoV-2 virus that killed thousand of people, as the COVID-19 pandemic
has led to disturbance problems and crises in most areas in the world \cite{Fu2020}.
A statistical analysis on the Ebola virus in Western Africa has addressed how
the geographical effect has affected the disease's transmission
in this area \cite{Kramer2016}. According to a different study,
the prevalence of H1N1, a subtype of influenza $A$ virus,
an infectious viral disease that causes upper respiratory tract infections and,
in some cases, also lower respiratory tract inflammation, and results
in symptoms such as chills, nasal secretions, low appetite, and fever,
is very sensitive to people's mobility \cite{daCosta2018}.

Today's mobility of individuals is increasingly affecting the spread of diseases,
because of the high urban population density and abundance of transportation options
(high number of means of transport), as a result of scientific and technological
development, and the need for people's mobility for various reasons,
including studying, tourism, trade, work, etc. Therefore, the spatial factor
plays an important role in disease transmission. To include the spatial effect
in our biological system, and taking into account recent works that used PDEs
to model the mobility of people, the symbol $(x, t)$ in our model describes
spatial diffusion on time $t$ and space $x$
\cite{Ducasse2023, Lu2022, SidiAmmi2022, Walker2023, Zinihi2024IM}.

In recent years, there has been a growing interest in incorporating fractional calculus
and nonlinear operators into mathematical models to provide more accurate and realistic
descriptions of complex phenomena. When considering the SIR epidemic model,
utilizing the Caputo fractional time-derivative and the $p$-Laplacian operator can offer several motivations.
Unlike the traditional SIR model, the Caputo ($\mathcal{C}$, for short) fractional derivative allows us
to account for nonlocal and long-range interactions, mirroring the complex and interconnected
nature of real-world epidemics. It introduces memory effects, capturing the influence of past
infections on current transmission probabilities, which is crucial for diseases with varying
infection histories. This approach also accommodates anomalous diffusion patterns often observed
in epidemiology, providing a more versatile framework to model outbreaks. By embracing the
$\mathcal{C}$-fractional derivative, we can better predict long-term epidemic trends,
improve parameter estimation, and contribute to more accurate
and adaptable epidemiological modeling that can help shape effective
public health strategies in an ever-evolving landscape of infectious diseases.
Secondly, the diffusion operator, which is a nonlinear generalization of the Laplace operator,
offers advantages when modeling the interactions between infected and susceptible individuals.
The $p$-Laplacian operator considers the heterogeneity and nonlinearity inherent in epidemic dynamics.
In the context of the SIR model, it allows for a more realistic representation
of the varying susceptibility and infectiousness levels among individuals, leading to a better
understanding of the overall epidemic behavior. Furthermore, the combined use of the
$\mathcal{C}$-fractional time-derivative and the diffusion operator into the SIR
model can provide insights into the long-term behavior and stability of the system.
Fractional
calculus provides a broader perspective on the dynamics, allowing for the analysis of fractional
order differential equations. This can help to reveal novel phenomena and uncover additional
aspects of the epidemic spread that are not captured by traditional integer-order models.
The inclusion of the second operator further enhances the model's ability to capture complex
interactions and nonlinear effects, potentially leading to more accurate predictions
and control strategies for epidemic outbreaks. In summary, the introduction of the
$\mathcal{C}$-fractional time-derivative and the specific diffusion operator into the
SIR epidemic model provides a multitude of compelling motivations.
These modifications enable the consideration of memory effects, capture nonlinear interactions,
and provide a deeper understanding of the long-term behavior of epidemics. By embracing these
advanced mathematical tools, researchers can enhance the accuracy and realism of their models,
ultimately contributing to improved strategies for mitigating and controlling infectious diseases.
Nevertheless, several challenges have arisen during the course of our study. The nonlinearity
introduced by the second operator has made it considerably challenging to establish
the existence of a solution for our problem. Furthermore, we have encountered significant
difficulties in formulating the adjoint system associated with our proposed problem and,
subsequently, in deriving optimality conditions. Additionally, the discretization
of the problem for obtaining numerical results has presented its own set of challenges.

In the existing literature, several studies have explored FDEs or ODEs for epidemic modeling, 
but they typically do not incorporate PDEs to model spatial dynamics. While some works combine 
fractional derivatives with PDEs, they usually involve linear operators, which limit their 
ability to capture the complex, nonlinear dynamics observed in epidemic spread. 
For instance, studies like \cite{Alshomrani2021, Cui2022, Gatto2021, MR4458835, Zinihi2024ES} 
focus on FDE/ODE models, and others, such as \cite{Ge2021, SidiAmmi2022, SidiAmmi2023, Zinihi2024IM} 
discuss PDE-based approaches without the inclusion of fractional time derivatives 
or the $p$-Laplacian operator. In contrast, our approach uniquely combines fractional 
time derivatives with a PDE framework, using the $p$-Laplacian operator to model the 
spatial and temporal dynamics of disease spread more effectively. This combination allows 
us to capture important features such as spatial heterogeneities and memory effects, which 
are not adequately addressed in traditional models that rely solely on ODEs or linear operators. 
Our work, therefore, offers a novel perspective by integrating these components to better 
simulate the real-world spread of infectious diseases.

Due to the diverse nature of these operators, numerous scholars and experts have
contributed to the scientific literature in various domains. These include,
but are not limited to, the fields of biology \cite{SidiAmmi2023, Zinihi2024IM},
quantum theory \cite{SahaRay2017}, and continuum mechanics \cite{Borggaard2011}.
This widespread interest and application in different scientific disciplines
underline the broad and versatile impact of these operators on the research landscape.
As an illustration, it is worth noting that Tao and Li considered a
chemotaxis-Stokes system with slow $p$-Laplacian diffusion, with $\Omega$ a bounded
smooth domain in $\mathbb{R}^3$ \cite{Tao2020}.
Authors in \cite{SidiAmmi2023} analyzed a reaction-diffusion SIR epidemic model,
formulated as a parabolic system of partial differential equations with the integration
of the $p$-Laplacian operator, underscoring the critical role of vaccination
distribution as a means to induce immunity. The central focus of their research
is to develop an optimal control strategy, meticulously designed to curtail both
the spread of infection and the cost associated with vaccination
within a defined spatiotemporal framework.
This leads to many questions to be discussed in this work, such as:
\begin{itemize}
\item[(i)] What relation can be observed between the diffusion
factor $p$ and the fractional order of differentiation $\alpha$?
	
\item[(ii)] During a period of vaccination,
has the spread of the infection been successfully contained?
\end{itemize}
In this work, we introduce a novel approach to address an optimal control
problem involving reaction-diffusion systems. Additionally, we explore its
potential application in the field of surveillance, where we integrate the
spatial behavior of the population and the control term that represents the
vaccination program. This is a powerful tool that can prevent and control
the infection's spread. Our principal motivation is to investigate and analyze
the impact of a vaccination program on infection-related disease transmission,
in the context of our more accurate model that accounts spatial diffusion.

The paper is organized as follows. Crucial definitions related to
$\mathcal{C}$-fractional calculus, the $p$-Laplacian operator, and
their fundamental properties, are given in Section~\ref{S2}.
Section~\ref{S3} introduces the fractional optimal control of the SIR
model in the context of reaction-diffusion while, in Sections~\ref{S4}
and \ref{S5}, we establish the existence and uniqueness of a nonnegative
solution and we prove the existence of an optimal solution for our proposed model.
Furthermore, Section~\ref{S6} is dedicated to establish necessary optimality conditions.
Before closing the present study, insightful numerical approximations that elucidate
the relationship between variations in $p$ and $\alpha$ and the disease's propagation,
are explained in Section~\ref{S7}. Finally, we provide a concise summary of our study
in Section~\ref{S8}.


\section{Notations and preliminaries}
\label{S2}

In this section we recall some definitions and properties
about the fractional integral, the $\mathcal{C}$-fractional
time-derivative, and the $p$-Laplacian operator, that are needed in the sequel.
Let $\mathcal{T}>0$, $f\in H^1(0, \mathcal{T})$, $\alpha\in (0,1)$, $p\geq 2$,
and $\Omega$ denote a bounded and fixed domain of $\mathbb{R}^2$
with a smooth boundary referred to as $\partial \Omega$.

\begin{definition}[See \cite{kilbas2006}]
If $\omega \in \mathbb{C}$
is such that $Re(\omega)>0$, then the Gamma function is given by
$$
\Gamma(\omega):=\int_{0}^{+\infty} e^{-t} t^{\omega-1} d t.
$$
\end{definition}

\begin{definition}[See \cite{kilbas2006}]
\label{D1}
\
\begin{itemize}
\item[(a)] The $\mathcal{C}$-fractional derivative of $f$
with base point $0$ of order $\alpha$ is defined at point $t$ by
\begin{equation}\label{E2.1}
{ }^{\mathcal{C}} \mathcal{D}^{\alpha}_{t} f(t)
= \frac{1}{\Gamma(1-\alpha)} \int_{0}^{t} (t - y)^{-\alpha} f^{\prime}(y) d y.
\end{equation}

\item[(b)] The backward $\mathcal{C}$-fractional derivative
with base point $\mathcal{T}$, is defined by
\begin{equation}\label{E2.2}
{ }^{\mathcal{C}}_{\mathcal{T}} \mathcal{D}^{\alpha}_{t} f(t)
= \frac{-1}{\Gamma(1-\alpha)}
\int_{t}^{\mathcal{T}} (y - t)^{-\alpha} f^{\prime}(y) d y.
\end{equation}
\end{itemize}
\end{definition}

\begin{remark}\label{R2}\label{R1}
If we let $\alpha\rightarrow 1$ in \eqref{E2.1},
then we obtain the usual derivative $\partial_t$.
\end{remark}

\begin{definition}[See \cite{kilbas2006}]
\label{D2}
The fractional integral operator with base point $0$ is written as
\begin{equation}
\label{E2.3}
I^{\alpha} f(t) = \frac{1}{\Gamma(\alpha)} \int_{0}^{t} (t - y)^{\alpha-1} f(y) d y.
\end{equation}
\end{definition}

\begin{lemma}[See \cite{Alikhanov2010}]
\label{L1}
Let $\varphi: [0, \mathcal{T}] \longrightarrow L^2(\Omega)$. Assume that
there exists the fractional derivative of $\varphi$ in the Caputo sense. Then
\begin{equation}
\label{E2.4}
\left({ }^{\mathcal{C}} \mathcal{D}^{\alpha}_{t} \varphi(t),
\varphi(t)\right)_{L^2(\Omega)} \geq \frac{1}{2}
{ }^{\mathcal{C}} \mathcal{D}^{\alpha}_{t} \|\varphi(t)\|_{L^2(\Omega)}^2.
\end{equation}
\end{lemma}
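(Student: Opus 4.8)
The plan is to show that the difference between the two sides of \eqref{E2.4} is a manifestly nonnegative quantity. Writing $(\cdot,\cdot)$ for the inner product of $L^2(\Omega)$, set
$$
\mathcal{H}(t) := \left({ }^{\mathcal{C}} \mathcal{D}^{\alpha}_{t} \varphi(t), \varphi(t)\right)_{L^2(\Omega)} - \frac{1}{2}\,{ }^{\mathcal{C}} \mathcal{D}^{\alpha}_{t} \|\varphi(t)\|_{L^2(\Omega)}^2 .
$$
First I would expand both terms using the definition \eqref{E2.1}. Since $\partial_y \|\varphi(y)\|_{L^2(\Omega)}^2 = 2\left(\partial_y\varphi(y), \varphi(y)\right)_{L^2(\Omega)}$, the factor $\tfrac12$ cancels and the two singular integrals combine into the single expression
$$
\mathcal{H}(t) = \frac{1}{\Gamma(1-\alpha)}\int_0^t (t-y)^{-\alpha}\left(\partial_y\varphi(y),\, \varphi(t)-\varphi(y)\right)_{L^2(\Omega)}\,dy .
$$

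Next I would exploit the elementary identity $\left(\partial_y\varphi(y),\, \varphi(t)-\varphi(y)\right)_{L^2(\Omega)} = -\tfrac12\,\partial_y\|\varphi(t)-\varphi(y)\|_{L^2(\Omega)}^2$ (here $\varphi(t)$ is frozen), which recasts $\mathcal{H}(t)$ as
$$
\mathcal{H}(t) = -\frac{1}{2\Gamma(1-\alpha)}\int_0^t (t-y)^{-\alpha}\,\partial_y\|\varphi(t)-\varphi(y)\|_{L^2(\Omega)}^2\,dy .
$$
Integrating by parts in $y$ with $u=(t-y)^{-\alpha}$, so that $\partial_y u = \alpha(t-y)^{-\alpha-1}$, produces a boundary term at $y=0$ and a remaining integral, giving
$$
\mathcal{H}(t) = \frac{1}{2\Gamma(1-\alpha)}\left[\,t^{-\alpha}\|\varphi(t)-\varphi(0)\|_{L^2(\Omega)}^2 + \alpha\int_0^t (t-y)^{-\alpha-1}\|\varphi(t)-\varphi(y)\|_{L^2(\Omega)}^2\,dy\,\right].
$$
Because $\Gamma(1-\alpha)>0$ for $\alpha\in(0,1)$, while $t^{-\alpha}>0$, $\alpha>0$, $(t-y)^{-\alpha-1}>0$, and the squared norms are nonnegative, I conclude $\mathcal{H}(t)\geq 0$, which is exactly the claimed inequality \eqref{E2.4}.

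The main obstacle is to justify the integration by parts against the singular kernel $(t-y)^{-\alpha}$, and in particular to show that the boundary contribution at $y=t$ vanishes. Here I would invoke the regularity of $\varphi$ (absolute continuity into $L^2(\Omega)$, consistent with the standing hypothesis $f\in H^1(0,\mathcal{T})$): writing $\varphi(t)-\varphi(y)=\int_y^t \partial_s\varphi(s)\,ds$ and applying Cauchy--Schwarz yields $\|\varphi(t)-\varphi(y)\|_{L^2(\Omega)}^2 = O(t-y)$, so that $(t-y)^{-\alpha}\|\varphi(t)-\varphi(y)\|_{L^2(\Omega)}^2 = O\!\big((t-y)^{1-\alpha}\big)\to 0$ as $y\to t^-$ since $\alpha\in(0,1)$. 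The same regularity, together with dominated convergence, legitimizes interchanging the $L^2(\Omega)$ inner product with the time integral and differentiating the squared norm under it; approximating $\varphi$ by smooth functions and passing to the limit is a clean way to make every step rigorous.
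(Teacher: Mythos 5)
Your proof is correct, and it is essentially the proof of the cited source: the paper itself does not prove this lemma but quotes it from Alikhanov (2010), whose argument is exactly yours — write the difference of the two sides as $-\frac{1}{2\Gamma(1-\alpha)}\int_0^t (t-y)^{-\alpha}\,\partial_y\|\varphi(t)-\varphi(y)\|_{L^2(\Omega)}^2\,dy$, integrate by parts against the singular kernel, and observe that the boundary term at $y=0$ and the remaining integral are both nonnegative — transplanted from scalar to $L^2(\Omega)$-valued functions. Your treatment of the one delicate point, the vanishing of the boundary contribution at $y=t$ via the $H^1$-type regularity estimate $\|\varphi(t)-\varphi(y)\|_{L^2(\Omega)}^2=O(t-y)$ so that $(t-y)^{-\alpha}\|\varphi(t)-\varphi(y)\|_{L^2(\Omega)}^2\to 0$ for $\alpha\in(0,1)$, is also the standard and correct justification.
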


\begin{remark}
Let $g\in H^1(0,T)$. If $f\geq g$, then
\begin{equation}\label{E2.5}
I^{\alpha} f(t) \geq I^{\alpha} g(t).
\end{equation}
\end{remark}

\begin{lemma}[See \cite{kilbas2006}]
\label{L2}
One has
\begin{equation}\label{E2.6}
I^{\alpha} \left( { }^{\mathcal{C}}
\mathcal{D}^{\alpha}_{t} f(t)\right) = f(t) - f(0).
\end{equation}
\end{lemma}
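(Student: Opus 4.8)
The plan is to establish the identity by composing the two integral representations directly and collapsing the resulting iterated integral through a Beta-function evaluation. First I would insert the definitions \eqref{E2.1} and \eqref{E2.3} into the left-hand side, writing
\begin{equation*}
I^{\alpha}\left({ }^{\mathcal{C}}\mathcal{D}^{\alpha}_{t} f\right)(t)
= \frac{1}{\Gamma(\alpha)\Gamma(1-\alpha)}
\int_{0}^{t}\int_{0}^{s} (t-s)^{\alpha-1}(s-y)^{-\alpha} f'(y)\, dy\, ds,
\end{equation*}
so that the whole composition is encoded as a single double integral over the triangle $\{0\le y\le s\le t\}$.

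The core step would be to interchange the order of integration by Fubini's theorem, rewriting the domain as $y$ ranging over $[0,t]$ and, for each fixed $y$, $s$ ranging over $[y,t]$. This isolates the inner kernel integral $\int_{y}^{t} (t-s)^{\alpha-1}(s-y)^{-\alpha}\, ds$, which I would evaluate with the substitution $s=y+(t-y)u$. Under this change of variable the powers of $(t-y)$ cancel completely (their exponents sum to $(\alpha-1)+(-\alpha)+1=0$) and the integral reduces to the Beta integral $\int_{0}^{1} u^{-\alpha}(1-u)^{\alpha-1}\, du = B(1-\alpha,\alpha)$. Invoking the classical relation $B(1-\alpha,\alpha)=\Gamma(1-\alpha)\Gamma(\alpha)/\Gamma(1)=\Gamma(1-\alpha)\Gamma(\alpha)$, the inner integral equals exactly $\Gamma(\alpha)\Gamma(1-\alpha)$, which cancels the prefactor. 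What remains is
\begin{equation*}
I^{\alpha}\left({ }^{\mathcal{C}}\mathcal{D}^{\alpha}_{t} f\right)(t)
= \int_{0}^{t} f'(y)\, dy = f(t)-f(0),
\end{equation*}
the last equality being the fundamental theorem of calculus, valid since $f\in H^1(0,\mathcal{T})$ is absolutely continuous.

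The main obstacle is the rigorous justification of the Fubini interchange, because both kernels are singular: $(t-s)^{\alpha-1}$ blows up as $s\to t$ while $(s-y)^{-\alpha}$ blows up as $s\to y$. I would dispose of this by a Tonelli argument: since the kernels are nonnegative and their combined iterated integral over the triangle is finite (precisely the convergent Beta value computed above) while $f'\in L^2(0,\mathcal{T})\subset L^1(0,\mathcal{T})$, the double integral of the absolute value of the integrand is finite, so Fubini applies and every manipulation above is legitimate.
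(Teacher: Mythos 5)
Your proof is correct. Note that the paper itself does not prove this lemma at all---it is quoted directly from the reference \cite{kilbas2006}---so there is no internal argument to compare against; your composition-plus-Fubini argument, with the inner kernel integral collapsing to the Beta value $B(1-\alpha,\alpha)=\Gamma(\alpha)\Gamma(1-\alpha)$ and the fundamental theorem of calculus applied to the absolutely continuous function $f\in H^1(0,\mathcal{T})$, is precisely the standard proof given in that reference, and your Tonelli justification of the interchange of integration order is the right way to handle the two integrable singularities.
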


The last part of this section is about the $p$-Laplacian operator.
Recall that the constant $k$ may change in each mathematical expression.
Let $\psi$ be a function defined on $\mathbb{R}^2\longrightarrow\mathbb{R}^2$ by
$$
\psi(w) = |w|^{p-2}w, \ \forall w\in\mathbb{R}^2.
$$
The $p$-Laplacian operator is written as
$$
\Delta_p: \quad\begin{array}{l}
W^{1,p}_{0}(\Omega) \ \longrightarrow \ W^{-1,p^\prime}(\Omega)\\
u \quad \longrightarrow \quad \Delta_p u
=\operatorname{div}\left(\psi\left(\nabla u\right)\right):
\quad
\begin{array}{l}
W^{1,p}_{0}(\Omega) \ \longrightarrow \ \mathbb{R} \\
v \quad \longrightarrow \quad\langle\Delta_p u, v\rangle
\end{array},
\end{array}
$$
where $\left\langle \cdot , \cdot\right\rangle$ is the duality product between
$W^{1,p}_{0}(\Omega)$ and $W^{-1,p^\prime}(\Omega)$. Besides, by the Green formula,
and with the homogeneous boundary conditions, we have
$$
\langle -\Delta_p u, v\rangle
= \int_{\Omega} \left| \nabla u \right|^{p-2} \nabla u \cdot \nabla v dx
$$
and
$$
\|\Delta_p u\|_{W^{-1,p^\prime}(\Omega)} = \|u\|_{W^{1,p}_{0}(\Omega)}.
$$


\section{Mathematical model}
\label{S3}

Numerous studies in the epidemiological literature have utilized
optimal control theory to examine how vaccination and treatment
affect the spread of infectious diseases \cite{MR4458835, MR4498607, MR4468429}.
The mentioned strategies are used to reduce the spread of the pandemic or to
gain lasting immunity in individuals, by analyzing likely the consequences of
the vaccine given to vulnerable individuals and the treatment provided
to infected people. We assume that the total population $N$ comprises
three subgroups: Susceptible, Infectious, and Removed individuals.
Having introduced these three compartments, a reaction-diffusion
SIR epidemic model can be constructed as follows:
\begin{equation}
\label{eq:SIR:syst}
\left\{\begin{array}{l}
{ }^{\mathcal{C}} \mathcal{D}^{\alpha}_{t}S
= \lambda_1 \Delta_p S+\beta N-\mu S I - \xi S, \vspace{0.2cm}\\
{ }^{\mathcal{C}} \mathcal{D}^{\alpha}_{t}I
= \lambda_2 \Delta_p I+\mu S I-(\xi + \kappa) I,
\qquad (x, t) \in \Omega_\mathcal{T}, \vspace{0.2cm}\\
{ }^{\mathcal{C}} \mathcal{D}^{\alpha}_{t}R
= \lambda_3 \Delta_p R + \kappa I - \xi R,
\end{array}\right.
\end{equation}
where ${ }^{\mathcal{C}} \mathcal{D}^{\alpha}_{t}$ is the
$\mathcal{C}$-fractional derivative with base point $0$
of order $\alpha$ and $\mathcal{T}>0$. Table~\ref{Tab1}
provides the transmission coefficients
relevant to the SIR model.
\begin{table}[h]
\centering
\caption{Transmission coefficients.}\label{Tab1}\vspace{0.1cm}
\begin{tabular}{|c|c||c|c|}
\hline
$ \ \beta \ $ & \text{Birth rate}
& $ \ \xi \ $ & \text{Natural mortality rate}\\ \hline
$ \ \mu \ $ & \text{Effective contact rate}
& $ \ \kappa \ $ & \text{Removal rate}\\ \hline
\end{tabular}
\end{table}

The term $\mu SI$ describes the number of individuals at risk
of contracting the disease (newly infected people) that are being
infected by persons from category $I$, and $\kappa I$ models
the infected individuals that recover without intervention.
As highlighted in Section~\ref{S1}, it is crucial to consider
the spatial dimension when studying disease propagation. To describe
this effect, we introduce specific terms that characterize spatial diffusion.
This will involve the inclusion of diffusion operators, denoted as
$\Delta_p S$, $\Delta_p I$, and $\Delta_p R$, with $\lambda_1,\lambda_2,\lambda_3 >0$
representing the diffusion coefficients for the susceptible, infected,
and recovered populations, respectively.
Taking into account the assumptions outlined above,
we get the mathematical model \eqref{eq:SIR:syst}.

In the sequel we introduce a control function $u$ (vaccination),
assuming that all susceptible individuals that are vaccinated are
transferred directly to the category that has been removed,
as depicted in Figure~\ref{F1}.
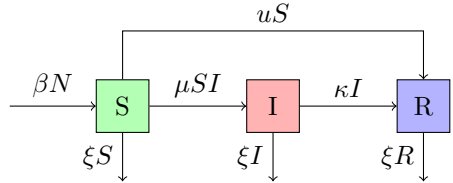
\begin{figure}[H]
\centering
\begin{tikzpicture}[node distance=2cm]
\node (S) [rectangle, draw, minimum size=0.7cm, fill=green!30] {S};
\node (I) [rectangle, draw, minimum size=0.7cm, fill=red!30, right of=S] {I};
\node (R) [rectangle, draw, minimum size=0.7cm, fill=blue!30, right of=I] {R};
\draw[->] (-1.5,0) -- ++(S) node[midway,above]{$\beta N$};
\draw[->] (S.south) -| (0,-1) node[near end,left]{$\xi S$};
\draw [->] (S) -- (I) node[midway,above]{$\mu SI$};
\draw[->] (I.south) -| (2,-1) node[near end,left]{$\xi I$};
\draw [->] (I) -- (R) node[midway,above]{$\kappa I$};
\draw[->] (R.south) -| (4,-1) node[near end,left]{$\xi R$};
\draw [->] (S.north) -| (0,1) -- (4,1) node[midway,above]{$u S$} -| (R.north) ;
\end{tikzpicture}
\caption{Visual representation of the control system \eqref{E3.1}.}\label{F1}
\end{figure}
Mathematically, we obtain the control system defined by
\begin{equation}
\label{E3.1}
\left\{\begin{array}{l}
{ }^{\mathcal{C}} \mathcal{D}^{\alpha}_{t}S
= \lambda_1 \Delta_p S + \beta N-\mu S I- \xi S - uS, \vspace{0.2cm}\\
{ }^{\mathcal{C}} \mathcal{D}^{\alpha}_{t}I
= \lambda_2 \Delta_p I+\mu S I-(\xi + \kappa) I, \vspace{0.2cm}\\
{ }^{\mathcal{C}} \mathcal{D}^{\alpha}_{t}R
= \lambda_3 \Delta_p R + \kappa I- \xi R + uS,
\end{array}\right.
\text{in } \Omega_\mathcal{T}
= \Omega \times [0, \mathcal{T}],
\end{equation}
with the Neumann boundary conditions
\begin{equation}
\label{E3.2}
\nabla S\cdot\vec{n} = \nabla I\cdot\vec{n}
= \nabla R\cdot\vec{n} = 0, \ \text{ on }
\partial\Omega_\mathcal{T}
= \partial\Omega \times [0, \mathcal{T}].
\end{equation}
These conditions are influenced by the normal $\vec{n}$
to the boundary $\partial\Omega$, and
\begin{equation}
\label{E3.3}
S(x, 0)=S_0, \ \ I(x, 0)=I_0, \ \ R(x, 0)=R_0, \ \text{ in }  \Omega.
\end{equation}
According to the no-flux boundary conditions, there is no emigration,
and system \eqref{E3.1}--\eqref{E3.3} is self-contained and features all
dynamic interactions across borders, while the initial conditions are
considered positive for biological reasons.

We now introduce an optimal control problem: to minimize
the objective functional
\begin{equation}
\label{E3.4}
\mathcal{J}(S, I, R, u)
= \|I(\cdot, \mathcal{T})\|_{L^{2}(\Omega)}^{2}
+ \|I\|_{L^{2}(\Omega_\mathcal{T})}^{2}
+ \eta \|u\|_{L^{2}(\Omega_\mathcal{T})}^{2},
\end{equation}
where $\eta$ is a weight constant used to control the vaccination rate, and
\begin{equation}
\label{E3.5}
u\in U_{a d}=\left\{v \in L^{\infty}(\Omega_\mathcal{T})
/ \|v\|_{L^{\infty}(\Omega_\mathcal{T})}<1 \text { and } v>0\right\}.
\end{equation}
We choose the norms $\|\cdot\|_{L^{2}(\Omega_\mathcal{T})}$
and $\|\cdot\|_{L^{2}(\Omega)}$ in \eqref{E3.4} because
the associated spaces are Hilbert spaces, which will allow us to explore,
in Section~\ref{S6}, the differentiability aspects of $\mathcal{J}$ in G\^ateaux sense,
based on the scalar products defined in the previously introduced Hilbert spaces.

Let $\nu = (\nu_1, \nu_2, \nu_3) = (S, I, R)$,
$\nu^0 =( \nu^0_1 ,\nu^0_2, \nu^0_3)=(S_0, I_0, R_0)$,
and $\lambda = (\lambda_1,\lambda_2,\lambda_3)$. Put
$\mathcal{W}(\Omega) = \left(W^{1,p}_{0}(\Omega)\right)^3$
with $\mathcal{W}^\prime(\Omega)$ its dual and
$\mathcal{H}(\Omega) = \left(L^2(\Omega)\right)^3$.
We consider the vector function $\Psi$ defined by
\[
\Psi(\nu(t))=\left(\Psi_1(\nu(t)),\Psi_2(\nu(t)),\Psi_3(\nu(t))\right),
\]
with $\nu(t)(\cdot)=\nu(t,\cdot)$ and
\begin{equation}
\label{E3.6}
\begin{cases}
\Psi_1(\nu(t))
= \beta(\nu_1+\nu_2+\nu_3)-\mu \nu_1\nu_2-(\xi + u)\nu_1,\\
\Psi_2(\nu(t))
= \mu \nu_1\nu_2-(\xi + \kappa)\nu_2,\qquad\qquad\qquad\qquad t\in [0, \mathcal{T}],\\
\Psi_3(\nu(t))
= \kappa \nu_2- \xi \nu_3+u\nu_1.
\end{cases}
\end{equation}
Let $\mathcal{L}$ be the nonlinear operator
\begin{equation}
\label{E3.7}
\begin{gathered}
\mathcal{L}: \quad \begin{array}{l}
D(\mathcal{L})\subset \mathcal{W}(\Omega)
\rightarrow \mathcal{W}^\prime(\Omega) \\
y \quad \rightarrow \quad -\lambda\Delta_p y
\end{array},\\
D(\mathcal{L})=\Big\{\omega\in \mathcal{W}(\Omega) / \
\nabla \omega_i\cdot\vec{n} = 0 \text{ on } \partial\Omega,  i=1,2,3\Big\}.
\end{gathered}
\end{equation}
Then \eqref{E3.1}--\eqref{E3.3} can be rewritten in the form
\begin{equation}
\label{E3.8}
\begin{cases}
{ }^{\mathcal{C}} \mathcal{D}^{\alpha}_{t}\nu(t) + \mathcal{L} \nu(t) = \Psi(\nu(t)),\\
\nu(0)=\nu^0.
\end{cases}
\qquad t\in[0, \mathcal{T}].
\end{equation}


\section{Existence and uniqueness of solution}
\label{S4}

In this section, we establish the existence and uniqueness of a solution
to the nonlinear fractional system \eqref{E3.1}--\eqref{E3.3}.
For that we apply the Faedo--Galerkin method.

To prove the existence of a solution for system \eqref{E3.8},
we commence by introducing the crucial notion of weak solution.

\begin{definition}
\label{D3}
We say that $\nu$ is a weak solution for problem \eqref{E3.8}
if it satisfies the initial condition $\nu(0)=\nu^0$
and the following equality:
\[
\left\langle { }^{\mathcal{C}} \mathcal{D}^{\alpha}_{t}\nu,
\varphi \right\rangle + \left\langle \mathcal{L}\nu,
\varphi \right\rangle - \left\langle \Psi(\nu) , \varphi \right\rangle = 0,
\ \forall\varphi\in L^p(0, \mathcal{T};\mathcal{W}(\Omega)).
\]
\end{definition}

\begin{lemma}[See \cite{Jarad2020}]
\label{L4}
If $\nu, \varphi \in C^{\infty}\left(\Omega_\mathcal{T}\right)$, then
$$
\begin{aligned}
\int_{0}^{\mathcal{T}}({ }^{\mathcal{C}} \mathcal{D}^{\alpha}_{t} \nu) \ \varphi d t
=& \int_{0}^{\mathcal{T}} \nu \ ({ }^{\mathcal{C}}_{\mathcal{T}}
\mathcal{D}^{\alpha}_{t} \varphi)  d t
+ \frac{1}{\Gamma(1-\alpha)} \varphi(x, \mathcal{T})
\int_{0}^{\mathcal{T}} \nu(t) (\mathcal{T}-t)^{-\alpha} d t\\
&- \frac{1}{\Gamma(1-\alpha)} \nu(x, 0)
\int_{0}^{\mathcal{T}} \varphi(t) t^{-\alpha} d t.
\end{aligned}
$$
\end{lemma}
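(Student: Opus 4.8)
The plan is to establish this fractional integration-by-parts identity by a direct computation, treating $x$ as a fixed parameter so that all integrals are over $[0,\mathcal{T}]$: I insert the definition of the forward Caputo derivative, exchange the order of integration by Fubini, integrate by parts in the inner variable, and finally re-identify the backward Caputo derivative together with the two boundary contributions. First I would write the left-hand side, using \eqref{E2.1}, as
\begin{equation*}
\int_0^{\mathcal{T}}\big({}^{\mathcal{C}}\mathcal{D}^\alpha_t\nu\big)\varphi\,dt
=\frac{1}{\Gamma(1-\alpha)}\int_0^{\mathcal{T}}\varphi(t)\int_0^t (t-y)^{-\alpha}\nu'(y)\,dy\,dt.
\end{equation*}
Since $\alpha\in(0,1)$, the kernel $(t-y)^{-\alpha}$ is integrable on the triangle $\{0\le y\le t\le\mathcal{T}\}$, and $\nu,\varphi$ are smooth, hence bounded with bounded derivatives, so Fubini applies and the double integral becomes $\frac{1}{\Gamma(1-\alpha)}\int_0^{\mathcal{T}}\nu'(y)\,G(y)\,dy$, where $G(y):=\int_y^{\mathcal{T}}(t-y)^{-\alpha}\varphi(t)\,dt$.

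Next I integrate by parts in $y$, so that $\int_0^{\mathcal{T}}\nu'G\,dy=[\nu G]_0^{\mathcal{T}}-\int_0^{\mathcal{T}}\nu G'\,dy$. Here $G(\mathcal{T})=0$, so the upper boundary term vanishes, while $G(0)=\int_0^{\mathcal{T}}t^{-\alpha}\varphi(t)\,dt$ produces the term $-\nu(0)G(0)$, which after dividing by $\Gamma(1-\alpha)$ is exactly the last term of the claimed identity. To compute $G'(y)$, I would first remove the singularity from the variable endpoint by the substitution $s=t-y$, writing $G(y)=\int_0^{\mathcal{T}-y}s^{-\alpha}\varphi(s+y)\,ds$; differentiating by the Leibniz rule then yields
\begin{equation*}
G'(y)=-(\mathcal{T}-y)^{-\alpha}\varphi(\mathcal{T})+\int_y^{\mathcal{T}}(t-y)^{-\alpha}\varphi'(t)\,dt.
\end{equation*}
By \eqref{E2.2}, the remaining integral equals $-\Gamma(1-\alpha)\,{}^{\mathcal{C}}_{\mathcal{T}}\mathcal{D}^\alpha_t\varphi(y)$. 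Substituting $G'$ back, the term $-(\mathcal{T}-y)^{-\alpha}\varphi(\mathcal{T})$ contributes $\varphi(\mathcal{T})\int_0^{\mathcal{T}}\nu(y)(\mathcal{T}-y)^{-\alpha}\,dy$, which is the first boundary term, and the backward-derivative term reconstructs $\int_0^{\mathcal{T}}\nu\,({}^{\mathcal{C}}_{\mathcal{T}}\mathcal{D}^\alpha_t\varphi)\,dt$ once the overall factor $\Gamma(1-\alpha)$ cancels. Collecting the three pieces gives precisely the stated formula.

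The main obstacle is the differentiation of $G$: because the kernel $(t-y)^{-\alpha}$ is singular exactly at the moving lower endpoint $t=y$, naive differentiation under the integral sign is not justified. The substitution $s=t-y$ is the key device, since it transfers the singularity to the fixed endpoint $s=0$, where $s^{-\alpha}$ remains integrable, and moves all $y$-dependence into the smooth factor $\varphi(s+y)$ and into the upper limit $\mathcal{T}-y$; only then does the Leibniz rule apply cleanly and produce the boundary term $-(\mathcal{T}-y)^{-\alpha}\varphi(\mathcal{T})$. A secondary technical point is the justification of Fubini, which is immediate once one observes that the kernel is absolutely integrable over the triangle and that the smooth functions $\nu,\varphi$ are bounded there.
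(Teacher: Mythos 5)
Your proof is correct. Note, however, that the paper itself does not prove this lemma at all: it is imported verbatim from the cited reference of Jarad and Abdeljawad, so there is no internal proof to compare against. Your computation is the standard derivation of such fractional integration-by-parts formulas and is essentially the argument found in that reference: Fubini on the triangle $\{0\le y\le t\le\mathcal{T}\}$ (justified, as you note, by the integrability of $(t-y)^{-\alpha}$ and the boundedness of the smooth factors), then integration by parts against $G(y)=\int_y^{\mathcal{T}}(t-y)^{-\alpha}\varphi(t)\,dt$, with the substitution $s=t-y$ correctly used to move the singularity to a fixed endpoint before applying the Leibniz rule. Your identification of the three resulting pieces with the backward Caputo term and the two boundary terms checks out exactly, including signs and the cancellation of $\Gamma(1-\alpha)$. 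One small point you could make explicit: after differentiation, $G'(y)$ contains the factor $(\mathcal{T}-y)^{-\alpha}$, which blows up at $y=\mathcal{T}$; the integration by parts $\int_0^{\mathcal{T}}\nu' G = [\nu G]_0^{\mathcal{T}}-\int_0^{\mathcal{T}}\nu G'$ is still legitimate because this singularity is integrable ($\alpha<1$), so $G$ is absolutely continuous on $[0,\mathcal{T}]$ and $G(\mathcal{T})=0$ holds as a genuine limit — a one-line remark (or a limit over $[0,\mathcal{T}-\epsilon]$) would close this gap cleanly.
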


Our first result asserts the existence of a weak solution for problem \eqref{E3.8}.

\begin{theorem}
\label{T1}
Let $\nu^0\in  L^2(\Omega)$. Problem \eqref{E3.8} possesses at least
one weak solution within the spaces $L^\infty(0, \mathcal{T};
\mathcal{H}(\Omega))\cap L^p(0, \mathcal{T}; \mathcal{W}(\Omega))$.
\end{theorem}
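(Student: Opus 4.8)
The plan is to construct the weak solution of \eqref{E3.8} by the Faedo--Galerkin method, as announced before the statement. Fix a sequence $\{w_j\}_{j\geq 1}$ that is free and total in $\mathcal{W}(\Omega)$ and orthonormal in $\mathcal{H}(\Omega)$, set $V_m=\operatorname{span}\{w_1,\dots,w_m\}$, and seek an approximate solution $\nu_m(t)=\sum_{j=1}^m g_{jm}(t)\,w_j$ of the projected problem
\begin{equation*}
\left\langle {}^{\mathcal{C}}\mathcal{D}^{\alpha}_{t}\nu_m,w_j\right\rangle
+\left\langle \mathcal{L}\nu_m,w_j\right\rangle
=\left\langle \Psi(\nu_m),w_j\right\rangle,\qquad j=1,\dots,m,
\end{equation*}
together with $\nu_m(0)=\nu_m^0\to\nu^0$ in $\mathcal{H}(\Omega)$. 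By orthonormality this reduces to a coupled system of Caputo fractional ODEs for the coefficients $g_{jm}$. Rewriting it as an equivalent Volterra integral system by means of Lemma~\ref{L2} and invoking a fixed-point argument (the right-hand side being continuous, indeed locally Lipschitz in the $g_{jm}$), I would obtain a unique local-in-time solution on some $[0,t_m)$.

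The core of the argument is the a priori estimate. Testing with $\nu_m$ (multiply the $j$th equation by $g_{jm}$ and sum over $j$) and applying Lemma~\ref{L1} gives
\begin{equation*}
\frac{1}{2}\,{}^{\mathcal{C}}\mathcal{D}^{\alpha}_{t}\|\nu_m(t)\|_{\mathcal{H}(\Omega)}^2
+\left\langle \mathcal{L}\nu_m,\nu_m\right\rangle
\leq \left\langle \Psi(\nu_m),\nu_m\right\rangle .
\end{equation*}
The $p$-Laplacian term is coercive, $\langle\mathcal{L}\nu_m,\nu_m\rangle=\sum_{i}\lambda_i\|\nabla\nu_{m,i}\|_{L^p(\Omega)}^p\geq C\,\|\nu_m\|_{\mathcal{W}(\Omega)}^p$, and for the reaction term I would exploit the polynomial structure of $\Psi$ in \eqref{E3.6}: the linear contributions are bounded by $C\|\nu_m\|_{\mathcal{H}(\Omega)}^2$, while the quadratic terms, once paired with $\nu_m$, produce cubic contributions that I would dominate using the embedding $W^{1,p}_{0}(\Omega)\hookrightarrow L^q(\Omega)$ (valid for every finite $q$ since $\Omega\subset\mathbb{R}^2$ and $p\geq 2$) together with Young's inequality, absorbing the leading part into the dissipation; alternatively one truncates $\Psi$ to a globally Lipschitz nonlinearity and removes the truncation a posteriori through uniform bounds. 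Applying $I^\alpha$, using Lemma~\ref{L2} and the monotonicity \eqref{E2.5}, followed by a fractional Gr\"onwall inequality, then yields bounds on $\nu_m$ that are uniform in $m$ in $L^\infty(0,\mathcal{T};\mathcal{H}(\Omega))\cap L^p(0,\mathcal{T};\mathcal{W}(\Omega))$; these global-in-time bounds simultaneously preclude blow-up, so the local solutions extend to all of $[0,\mathcal{T}]$.

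With the uniform estimates in hand, $\mathcal{L}\nu_m$ is bounded in $L^{p^\prime}(0,\mathcal{T};\mathcal{W}^\prime(\Omega))$ and, testing the Galerkin equation, ${}^{\mathcal{C}}\mathcal{D}^{\alpha}_{t}\nu_m$ is bounded in a suitable dual space. Hence, along a subsequence, $\nu_m\rightharpoonup\nu$ weakly in $L^p(0,\mathcal{T};\mathcal{W}(\Omega))$ and weakly-$*$ in $L^\infty(0,\mathcal{T};\mathcal{H}(\Omega))$, $\mathcal{L}\nu_m\rightharpoonup\chi$, and a fractional Aubin--Lions-type compactness argument provides strong convergence $\nu_m\to\nu$ in $L^2(\Omega_\mathcal{T})$ and a.e. The strong convergence lets me pass to the limit in the reaction term $\Psi(\nu_m)$, while Lemma~\ref{L4} transposes the Caputo derivative onto the test function and recovers the initial datum $\nu(0)=\nu^0$ of Definition~\ref{D3}. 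It then remains to identify $\chi=\mathcal{L}\nu$, which I would close by the Minty--Browder monotonicity trick, exploiting $\langle\mathcal{L}\nu_m-\mathcal{L}\varphi,\nu_m-\varphi\rangle\geq 0$ for every $\varphi\in L^p(0,\mathcal{T};\mathcal{W}(\Omega))$.

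The decisive difficulty is precisely this last identification combined with the nonlocal time operator: because $\Delta_p$ is nonlinear, weak convergence of $\nu_m$ does not transfer to $|\nabla\nu_m|^{p-2}\nabla\nu_m$, and the monotonicity argument cannot be fed by a classical parabolic integration by parts in $t$ — the requisite energy (in)equality must instead be produced through Lemma~\ref{L1} and the fractional integration-by-parts formula of Lemma~\ref{L4}. Establishing the strong $L^2$-compactness in this fractional framework, so that the quadratic reaction nonlinearities converge, is the second delicate point, and it is where the dual bound on ${}^{\mathcal{C}}\mathcal{D}^{\alpha}_{t}\nu_m$ is indispensable.
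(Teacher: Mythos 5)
Your overall skeleton — Galerkin basis, Volterra reformulation for local existence of the approximate system, energy estimate via Lemma~\ref{L1}, extraction of weak limits, Lemma~\ref{L4} to transfer the Caputo derivative onto test functions, and the Minty/type-M monotonicity trick to identify $\chi=\mathcal{L}\nu$ — coincides with the paper's proof. The divergence, and the genuine gap, is in how you close the a priori estimate for the reaction term, which is precisely the step that makes the bounds global on $[0,\mathcal{T}]$.

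Pairing $\Psi(\nu_m)$ from \eqref{E3.6} with $\nu_m$ produces the sign-indefinite cubic contribution $\mu\,\nu_{m,1}\nu_{m,2}(\nu_{m,2}-\nu_{m,1})$, and your plan to dominate it by Sobolev embedding plus Young's inequality, absorbing into the dissipation, cannot work here. In the Neumann setting of the model (conditions \eqref{E3.2} and $D(\mathcal{L})$ in \eqref{E3.7}) constants are admissible, the gradient term vanishes on them, and an inequality of the form $\int_\Omega|\nu|^3\,dx\leq \epsilon\,\lambda\|\nabla\nu\|_{L^p}^p+C\bigl(1+\|\nu\|_{L^2}^2\bigr)$ fails already for $\nu\equiv c$ with $c$ large; more generally, any Gagliardo--Nirenberg/Young splitting of a cubic leaves a remainder $\|\nu\|_{L^2}^{q}$ with $q>2$, so the fractional Gr\"onwall step yields only a \emph{local-in-time} bound (the differential inequality is superlinear in $\|\nu_m\|_{\mathcal{H}(\Omega)}^2$ and may blow up), not the uniform bound \eqref{E4.7} needed to extend the Galerkin solutions to all of $[0,\mathcal{T}]$ and run the compactness argument. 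Your fallback — truncating $\Psi$ to a globally Lipschitz map and ``removing the truncation a posteriori through uniform bounds'' — is circular unless you produce an $L^\infty$ bound independent of the truncation level, which you do not. The paper closes this step with a structural trick specific to the SIR system that your proposal misses: summing the three Galerkin equations cancels the quadratic and control terms ($\mu SI$, $\kappa I$, $uS$), so the total population $N_m=\vartheta_m^1+\vartheta_m^2+\vartheta_m^3$ satisfies a \emph{linear} fractional inequality ${}^{\mathcal{C}}\mathcal{D}^{\alpha}_{t}\|N_m\|_{L^2(\Omega)}\leq (C+\beta-\xi)\|N_m\|_{L^2(\Omega)}$, to which Gr\"onwall applies directly; this bounds $N_m$, hence (together with nonnegativity of the components) each $\vartheta_m^i$, and only then is the term $\int_\Omega\Psi(\vartheta_m)\vartheta_m\,dx$ in \eqref{E4.5} controlled and the energy estimate closed globally. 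You need this mass-control/cancellation argument, or an equivalent invariant-region bound, to make your estimate work.
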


\begin{proof}
Consider a basis $\{e_j\}_{j\in\mathbb{N}}$ belonging to the space
$\mathcal{W}(\Omega)$, where each $e_j$ is a smooth function defined
in $\Omega$. We aim to prove that the finite-dimensional approximation
solutions $\left(\vartheta_m(t)\right)_{m\in\mathbb{N}}$
converge to the exact one, where
\[
\vartheta_m(t)=\sum_{j=1}^m \omega_j(t) e_j.
\]
These approximated solutions correspond
to the solutions of the following approximate system:
\begin{equation}\label{E4.1}
\begin{cases}
\left\langle { }^{\mathcal{C}} \mathcal{D}^{\alpha}_{t}\vartheta_m,
e_j\right\rangle + \left\langle \mathcal{L}\vartheta_m , e_j\right\rangle
= \left\langle \Psi(\vartheta_m), e_j\right\rangle, \quad t>0, \ j=1, \ldots, m,\\
\vartheta_m(0)=\sum_{j=1}^m \omega_j(0) e_j =:\vartheta_m^0,
\end{cases}
\end{equation}
where $\omega_j(t)=\omega_{m}^j(t)$. Since $\mathcal{L}$ is a bounded operator
and the determinant $\underset{j\neq l}{\det}(\left\langle e_j, e_l\right\rangle)$
is nonzero (due to $\{e_j\}_{j\in\mathbb{N}}$ forming a basis of $\mathcal{W}(\Omega)$),
coupled with the continuity of $\Psi$ as a function, it follows, based on the results of
\cite{Almeida2017,Diethelm2010}, that problem \eqref{E4.1} possesses at least one
local solution within the interval $[0, T_m)$, where $\mathcal{T}_m>0$.
In addition, \eqref{E4.1} can be rewritten in the form
\begin{equation}
\label{E4.2}
{ }^{\mathcal{C}} \mathcal{D}^{\alpha}_{t}\omega_k(t)
= -\lambda \int_{\Omega} |\nabla \vartheta_m|^{p-2} \nabla \vartheta_m
\cdot \nabla e_k dx + \int_{\Omega} \Psi\left(\vartheta_m\right) e_k d x =: G_k(t)
\end{equation}
with $1\leq k\leq m$. By applying the fractional integral \eqref{E2.6} to \eqref{E4.2},
we obtain
\begin{equation}
\label{E4.3}
\omega_k(t) = \omega_k(0) + \frac{1}{\Gamma(\alpha)} \int_0^t (t-s)^{\alpha - 1} G_k(s)d s.
\end{equation}
We proceed by showing the extensibility of this approximate solution over the entire
temporal interval $[0, \mathcal{T}]$. To achieve this, we multiply equation \eqref{E4.1}
by $\omega_i(t)$, sum up over $i=1, \ldots, m$, and employ the Green formula
alongside \eqref{E2.4}, yielding
\begin{equation}
\label{E4.4}
\frac{1}{2} { }^{\mathcal{C}} \mathcal{D}^{\alpha}_{t}
\left\|\vartheta_m\right\|_{\mathcal{H}(\Omega)}^2
+ \lambda\int_{\Omega}\left|\nabla \vartheta_m\right|^{p} d x
\leq \int_{\Omega} \Psi\left(\vartheta_m\right) \vartheta_m d x.
\end{equation}
Using \eqref{E2.5} and \eqref{E2.6}, we get
\begin{equation}
\label{E4.5}
\begin{aligned}
\left\|\vartheta_m(t)\right\|_{\mathcal{H}(\Omega)}^2
\leq &\left\|\vartheta_m(0)\right\|_{\mathcal{H}(\Omega)}^2
- \frac{2\lambda}{\Gamma(\alpha)} \int_{0}^{t}
\int_{\Omega} (t - s)^{\alpha - 1} \left|\nabla \vartheta_m(s)\right|^{p} dx ds\\
&+ \frac{2}{\Gamma(\alpha)} \int_{0}^{t} \int_{\Omega}
(t - s)^{\alpha - 1} \Psi\left(\vartheta_m(s)\right) \vartheta_m(s) dx ds.
\end{aligned}
\end{equation}
Starting from equation \eqref{E4.1}, we derive the following relationship:
$$
{ }^{\mathcal{C}} \mathcal{D}^{\alpha}_{t} N_m
= \mathcal{L}\vartheta_m^1 + \mathcal{L}\vartheta_m^2
+ \mathcal{L}\vartheta_m^3 + \beta N_m - \xi N_m,
$$
where $N_m = \vartheta_m^1 + \vartheta_m^2 + \vartheta_m^3$. Given that
$\mathcal{L}$ is a bounded operator, we can establish the existence
of a constant $C$ such that
$$
{ }^{\mathcal{C}} \mathcal{D}^{\alpha}_{t}
\|N_m\|_{L^2(\Omega)} \leq (C + \beta - \xi) \|N_m\|_{L^2(\Omega)}.
$$
Utilizing \eqref{E2.5} and \eqref{E2.6}, we can obtain that
$$
\|N_m\|_{L^2(\Omega)} \leq \|N_m^0\|_{L^2(\Omega)}
+ \frac{C + \beta - \xi}{\Gamma(\alpha)}
\int_0^t (t-s)^{\alpha-1}\|N_m(s)\|_{L^2(\Omega)} ds.
$$
Applying the Gronwall inequality, we conclude with
$$
\begin{aligned}
\|N_m\|_{L^2(\Omega)}
&\leq \|N_m^0\|_{L^2(\Omega)} \exp\left[ \frac{C + \beta - \xi}{\Gamma(\alpha)}
\int_0^t (t-s)^{\alpha-1} ds\right]\\
&\leq \|N_m^0\|_{L^2(\Omega)} \exp\left[
\frac{C + \beta - \xi}{\Gamma(\alpha)} \frac{t^\alpha}{\alpha} \right] < \infty.
\end{aligned}
$$
This establishes the boundedness of $(\vartheta_m)_m$ within $\mathcal{H}(\Omega)$.
In other words, the second term $\Psi$ is a member of $\mathcal{H}(\Omega)$
with respect to $t\in [0, T_m)$. Consequently, by \eqref{E4.5}, there exist
positive constants $c_1$ and $c_2$ that allow us to express the inequality
\begin{equation}
\label{E4.6}
\left\|\vartheta_m(t)\right\|_{\mathcal{H}(\Omega)}^2
\leq  \left\|\vartheta_m(0)\right\|_{\mathcal{H}(\Omega)}^2 + c_1 t + c_2.
\end{equation}
Accordingly, we can confidently broaden the scope of our approximate solution
to encompass the entirety of the temporal interval $[0, \mathcal{T}]$.
As a result, we obtain that
\begin{equation}\label{E4.7}
\left\|\vartheta_m\right\|_{L^\infty(0,
\mathcal{T}; \mathcal{H}(\Omega))} \leq  c.
\end{equation}
Given that $W^{1,p}_0(\Omega)$ is compactly embedded in $L^p(\Omega)\subseteq L^2(\Omega)$,
we deduce that the sequence $(\vartheta_m^i)_{i=1,2,3}$ is also compact within $L^2(\Omega)$.
Considering now the inequality \eqref{E4.7}, and knowing that $\mathcal{L}$
is a bounded operator, we establish the existence of a subsequence $(\vartheta_m)$,
denoted again as $(\vartheta_m)$, such that the following weak convergence relations hold:
\begin{equation}
\label{E4.8}
\begin{aligned}
& \vartheta_m \rightharpoonup \nu \text { weak-star in }
L^{\infty}\left(0, \mathcal{T}; \mathcal{H}(\Omega)\right), \\
& \mathcal{L} \vartheta_m \rightharpoonup  \chi \text { weakly in }
L^{p^\prime}\left(0, \mathcal{T}; \mathcal{W}^\prime(\Omega)\right),\\
& \vartheta_m \rightharpoonup  \nu \text { weakly in }
L^p(0, \mathcal{T}; \mathcal{W}(\Omega)),\\
& \vartheta_m \longrightarrow  \nu \text { strongly in }
\mathcal{H}(\Omega_\mathcal{T}), \\
& \vartheta_m \longrightarrow  \nu \text { a.e. in }
\mathcal{H}(\Omega_\mathcal{T}).
\end{aligned}
\end{equation}
Expressing $\vartheta_m^1 \vartheta_m^2 - \nu_1 \nu_2$ as
$\left(\vartheta_m^1 - \nu_1\right) \vartheta_m^2
+ \nu_1\left(\vartheta_m^2 - \nu_2\right)$, and considering
the convergences \eqref{E4.8}, we can utilize \eqref{E4.7}
to conclude that $\vartheta_m^1 \vartheta_m^2 \rightharpoonup
\nu_1 \nu_2$ weakly in $L^p(0, \mathcal{T}; W^{1, p}_0(\Omega))$.
This implies that
\begin{equation}
\label{E4.9}
\Psi(\vartheta_m) \rightharpoonup  \Psi(\nu)
\text { weakly in } L^p(0, \mathcal{T}; \mathcal{W}(\Omega)).
\end{equation}
Note that the space $D^\prime(\Omega_\mathcal{T})$ is the dual of
$C_0^\infty(\Omega_\mathcal{T})$. If $\varphi\in C_0^\infty(\Omega_\mathcal{T})$, then
$$
\int_{0}^{\mathcal{T}} \int_{\Omega} \vartheta_m  \ ({ }^{\mathcal{C}}_{\mathcal{T}}
\mathcal{D}^{\alpha}_{t} \varphi) dx dt \longrightarrow \int_{0}^{\mathcal{T}}
\int_{\Omega} \nu \ ({ }^{\mathcal{C}}_{\mathcal{T}} \mathcal{D}^{\alpha}_{t} \varphi) dx dt,
$$
$$
\int_{\Omega} \varphi(x, \mathcal{T}) \int_{0}^{\mathcal{T}}
\vartheta_m(t) (\mathcal{T}-t)^{-\alpha} dt dx
\longrightarrow \int_{\Omega} \varphi(x, \mathcal{T})
\int_{0}^{\mathcal{T}} \nu(t) (\mathcal{T}-t)^{-\alpha} dt dx,
$$
and
$$
\frac{1}{\Gamma(1-\alpha)} \vartheta_m(x, 0) \int_{0}^{\mathcal{T}}
\varphi(t) t^{-\alpha} d t \longrightarrow \frac{1}{\Gamma(1-\alpha)}
\nu(x, 0) \int_{0}^{\mathcal{T}} \varphi(t) t^{-\alpha} d t.
$$
By Lemma~\ref{L4}, and according to the initial conditions of \eqref{E4.1},
we find that
\begin{equation}
\label{E4.10}
{ }^{\mathcal{C}} \mathcal{D}^{\alpha}_{t}\vartheta_m
\rightharpoonup { }^{\mathcal{C}} \mathcal{D}^{\alpha}_{t}
\nu \text{ weakly in } D^\prime(\Omega_\mathcal{T}).
\end{equation}
The remaining step is to prove that $\mathcal{L}\nu = \chi$. Recall
that $\mathcal{L}$ is of type $M$, that is, if $\vartheta_m\rightharpoonup \nu$,
$\mathcal{L}\vartheta_m\rightharpoonup \chi$, and $\limsup_{m\rightarrow\infty}
\langle\mathcal{L}\vartheta_m , \vartheta_m\rangle\leq \chi(\nu)$,
then $\mathcal{L}\nu = \chi$. Hence, we have
\[
\limsup _{m\rightarrow\infty} \left\langle\mathcal{L}\vartheta_m,
\vartheta_m\right\rangle \leq \left\langle\chi , \nu\right\rangle.
\]
Due to the monotonicity of $\mathcal{L}$, we can observe that
for all $\vartheta_m,\omega\in D(\mathcal{L})$
$$
\begin{aligned}
D_m &= \int_{0}^{\mathcal{T}} \left\langle\mathcal{L}\vartheta_m
- \mathcal{L}\omega , \vartheta_m - \omega\right\rangle dt\\
&= \int_{0}^{\mathcal{T}} \left\langle\mathcal{L}\vartheta_m,
\vartheta_m\right\rangle dt - \int_{0}^{\mathcal{T}}
\left\langle\mathcal{L}\vartheta_m, \omega\right\rangle dt
- \int_{0}^{\mathcal{T}} \left\langle\mathcal{L}\omega ,
\vartheta_m - \omega\right\rangle dt \geq 0.
\end{aligned}
$$
Therefore, we conclude that
$$
\begin{aligned}
\limsup _{m\rightarrow\infty} D_m
&\leq \int_{0}^{\mathcal{T}} \left\langle\chi , \nu\right\rangle dt
- \int_{0}^{\mathcal{T}} \left\langle\chi , \omega\right\rangle dt
- \int_{0}^{\mathcal{T}} \left\langle\mathcal{L}\omega , \nu - \omega\right\rangle dt\\
&\leq \int_{0}^{\mathcal{T}} \left\langle\chi
- \mathcal{L}\omega, \nu - \omega\right\rangle dt.
\end{aligned}
$$
Finally, we establish that
$$
\int_{0}^{\mathcal{T}} \left\langle\chi
- \mathcal{L}\omega , \nu - \omega\right\rangle dt\geq 0.
$$
Considering $\delta>0$, we set  $\omega = \nu - \delta h \in D(\mathcal{L})$.
Since $\mathcal{L}$ is hemicontinuous, that is, the function $t\mapsto \left\langle
\mathcal{L}\left( \nu_1 + t \nu_2\right) , \nu_3 \right\rangle$ is continuous
of $\mathbb{R}\rightarrow\mathbb{R}$ for all $\nu_1, \nu_2, \nu_3\in W^{1, p}_0(\Omega)$,
we can state that
$$
\delta\int_{0}^{\mathcal{T}} \left\langle\chi
- \mathcal{L}(\nu - \delta h) , h\right\rangle dt \geq 0.
$$
Consequently,
$$
\left\langle\chi - \mathcal{L}(\nu - \delta h) , h\right\rangle \geq 0.
$$
As we let $\delta\rightarrow 0$, we find that
$$
\left\langle\chi - \mathcal{L}\nu , h\right\rangle
\geq 0, \ \forall h \in D(\mathcal{L}),
$$
and, subsequently,
\begin{equation}
\label{E4.11}
\chi = \mathcal{L}\nu.
\end{equation}
The proof is complete.
\end{proof}

We now prove uniqueness of solution.

\begin{theorem}
\label{T2}
A weak solution $\nu$ of \eqref{E3.8} is unique.
\end{theorem}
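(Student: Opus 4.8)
The plan is a standard energy and monotonicity argument adapted to the Caputo setting. Suppose $\nu$ and $\tilde\nu$ are two weak solutions of \eqref{E3.8} sharing the same initial datum $\nu^0$, and put $w=\nu-\tilde\nu$, so that $w(0)=0$ and
$$
{ }^{\mathcal{C}} \mathcal{D}^{\alpha}_{t} w + \bigl(\mathcal{L}\nu-\mathcal{L}\tilde\nu\bigr) = \Psi(\nu)-\Psi(\tilde\nu).
$$
First I would test this identity against $w$ in $\mathcal{H}(\Omega)$. The fractional derivative term is controlled from below by Lemma~\ref{L1}, giving $\tfrac12{ }^{\mathcal{C}} \mathcal{D}^{\alpha}_{t}\|w\|_{\mathcal{H}(\Omega)}^2 \le \langle { }^{\mathcal{C}} \mathcal{D}^{\alpha}_{t} w,w\rangle$, while the diffusion term is handled by the monotonicity of the $p$-Laplacian already exploited in the proof of Theorem~\ref{T1}; in fact, for $p\ge 2$ the strong monotonicity inequality yields $\langle\mathcal{L}\nu-\mathcal{L}\tilde\nu,w\rangle\ge c\|\nabla w\|_{L^p(\Omega)}^p\ge 0$, a nonnegative quantity that I would keep on the left-hand side in order to absorb gradient contributions later.

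The decisive estimate is the Lipschitz-type bound on the reaction term $\langle\Psi(\nu)-\Psi(\tilde\nu),w\rangle$. All components in \eqref{E3.6} are affine except the bilinear coupling $\mu\nu_1\nu_2$; writing $\nu_1\nu_2-\tilde\nu_1\tilde\nu_2 = w_1\nu_2+\tilde\nu_1 w_2$, the only problematic integrals are of the form $\int_\Omega w_i\,\nu_j\,w_k\,dx$. These I would control by Hölder's inequality followed by the Sobolev embedding of $W^{1,p}_0(\Omega)$, together with the a priori bounds coming from the boundedness of $N=\nu_1+\nu_2+\nu_3$ (and its counterpart for $\tilde\nu$) established in the existence proof; any residual factor $\|\nabla w\|_{L^p(\Omega)}^p$ is then absorbed into the coercive diffusion term retained above via Young's inequality. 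The outcome is an inequality of the form
$$
{ }^{\mathcal{C}} \mathcal{D}^{\alpha}_{t}\|w\|_{\mathcal{H}(\Omega)}^2 \le C\,\|w\|_{\mathcal{H}(\Omega)}^2,
$$
with $C>0$ depending only on the data and on the uniform bounds of the two solutions.

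Finally, applying the fractional integral $I^\alpha$ and invoking Lemma~\ref{L2} with $w(0)=0$ converts this into
$$
\|w(t)\|_{\mathcal{H}(\Omega)}^2 \le \frac{C}{\Gamma(\alpha)}\int_0^t (t-s)^{\alpha-1}\|w(s)\|_{\mathcal{H}(\Omega)}^2\,ds,
$$
and a fractional Gronwall inequality forces $\|w(t)\|_{\mathcal{H}(\Omega)}\equiv 0$ on $[0,\mathcal{T}]$, that is, $\nu=\tilde\nu$. The main obstacle I anticipate is precisely the nonlinear estimate in the critical case $p=2$: since $W^{1,2}_0(\Omega)$ does not embed into $L^\infty(\Omega)$ in two space dimensions, the bilinear bound cannot be closed through a crude $L^\infty$ estimate and must instead rely on a Ladyzhenskaya-type interpolation, with careful bookkeeping to guarantee that the excess gradient norm is genuinely dominated by the $p$-Laplacian coercivity.
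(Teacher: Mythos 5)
Your proposal follows essentially the same route as the paper's proof: form the difference $w$ of two solutions, test the equation against $w$, control the Caputo term via Lemma~\ref{L1}, discard the monotone $p$-Laplacian contribution, bound the reaction term to obtain ${}^{\mathcal{C}} \mathcal{D}^{\alpha}_{t}\|w\|_{\mathcal{H}(\Omega)}^2 \leq C\,\|w\|_{\mathcal{H}(\Omega)}^2$, and conclude by applying $I^{\alpha}$ together with \eqref{E2.5}--\eqref{E2.6} and a Gronwall argument. The only divergence is cosmetic: the paper simply asserts that $\Psi$ is Lipschitz (citing references) where you spell out the bilinear decomposition, the absorption via Young's inequality, and the delicate case $p=2$ --- a more explicit treatment of the same step, not a different argument.
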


\begin{proof}
Consider two weak solutions to problem \eqref{E4.1},
$\upsilon$ and $\bar{\upsilon}$. Let us denote their
difference by $\omega = \upsilon - \bar{\upsilon}$. We have
\[
{ }^{\mathcal{C}} \mathcal{D}^{\alpha}_{t} \omega
+ \left(\mathcal{L}\upsilon - \mathcal{L}\bar{\upsilon}\right)
= \left(\Psi(\upsilon) - \Psi(\bar{\upsilon})\right),
\]
with its associate initial data $\omega^0 = \upsilon^0 - \bar{\upsilon}^0$.
Multiplying this equation by $\omega$, we obtain that
\[
\left\langle { }^{\mathcal{C}} \mathcal{D}^{\alpha}_{t} \omega,
\omega\right\rangle +  \left\langle\mathcal{L}\upsilon
- \mathcal{L}\bar{\upsilon} , \upsilon - \bar{\upsilon}\right\rangle
= \left\langle \Psi(\upsilon) - \Psi(\bar{\upsilon}),
\upsilon - \bar{\upsilon}\right\rangle.
\]
As stated in Theorem~\ref{T1}, $\nu$ is constrained within the space
$\mathcal{H}(\Omega)$. Thus, function $\Psi$ is Lipschitz continuous
in $\nu$ uniformly with respect to $t\in [0, \mathcal{T}]$
and the operator $\mathcal{L}$ is monotone
\cite{Barbu1994,SidiAmmi2022,SidiAmmi2023}.
By \eqref{E2.4}, we get
\[
{ }^{\mathcal{C}} \mathcal{D}^{\alpha}_{t}
\left\|\omega(t)\right\|_{\mathcal{H}(\Omega)}^2
\leq C  \left\|\omega(t)\right\|_{\mathcal{H}(\Omega)}^2.
\]
Using \eqref{E2.5} and \eqref{E2.6}, we have
\[
\left\|\omega(t)\right\|_{\mathcal{H}(\Omega)}^2
\leq \left\|\omega^0\right\|_{\mathcal{H}(\Omega)}^2
+ \frac{C}{\Gamma(\alpha)} \int_{0}^{t}
(t - s)^{\alpha-1}\left\|\omega(s)\right\|_{\mathcal{H}(\Omega)}^2 ds.
\]
According to Gronwall's inequality, we conclude that
\[
\left\|\omega(t)\right\|_{\mathcal{H}(\Omega)}^2
\leq \left\|\omega^0\right\|_{\mathcal{H}(\Omega)}^2
\exp\left[ \frac{C}{\Gamma(\alpha)} \int_0^t (t-s)^{\alpha-1} ds\right],
\]
which completes the proof.
\end{proof}

\begin{theorem}\label{T3}
The solution of \eqref{E3.1}--\eqref{E3.3} is positive.
\end{theorem}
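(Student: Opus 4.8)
The plan is to run a negative-part energy argument adapted to the Caputo derivative, exploiting the fact that each reaction term in \eqref{E3.1} carries a favorable sign on the set where the corresponding component becomes negative. For a function $w$ write $w_-=\max(-w,0)\ge 0$, so that $w\,w_-=-w_-^2$ and, by the $p$-Laplacian identity from Section~\ref{S2}, $\int_\Omega (\Delta_p w)\,w_-\,dx=\int_\Omega|\nabla w_-|^p\,dx\ge 0$. The crucial fractional ingredient is the convexity refinement of Lemma~\ref{L1}: since $\Phi(w)=w_-^2$ is convex and $C^1$ with $\Phi'(w)=-2w_-$, one has ${ }^{\mathcal{C}}\mathcal{D}^{\alpha}_{t}(w_-^2)\le -2\,w_-\,{ }^{\mathcal{C}}\mathcal{D}^{\alpha}_{t}w$, hence after integrating in $x$,
\[
\int_\Omega w_-\,{ }^{\mathcal{C}}\mathcal{D}^{\alpha}_{t}w\,dx\le -\tfrac12\,{ }^{\mathcal{C}}\mathcal{D}^{\alpha}_{t}\|w_-\|_{L^2(\Omega)}^2.
\]
Because $S_0,I_0,R_0>0$, each negative part vanishes at $t=0$; the strategy is to derive a fractional Gronwall inequality for the relevant $L^2$-norms and apply \eqref{E2.5}--\eqref{E2.6} to force them to stay zero. (The argument yields nonnegativity, which is the biologically meaningful content of the statement.)

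First I would treat $I$, whose equation is homogeneous in $I$. Testing the $I$-equation with $I_-$ and using the two facts above gives, after discarding the nonnegative diffusion contribution,
\[
\tfrac12\,{ }^{\mathcal{C}}\mathcal{D}^{\alpha}_{t}\|I_-\|_{L^2(\Omega)}^2\le \int_\Omega(\mu S-\xi-\kappa)\,I_-^2\,dx\le C\,\|I_-\|_{L^2(\Omega)}^2,
\]
where the last bound uses the boundedness of $S$ coming from Theorems~\ref{T1}--\ref{T2} (equivalently, the uniform Lipschitz control of $\Psi$ invoked there). Applying $I^\alpha$ via \eqref{E2.6}, using \eqref{E2.5} and $I_-(0)=0$, the fractional Gronwall inequality yields $\|I_-(t)\|_{L^2(\Omega)}=0$, i.e. $I\ge 0$ on $\Omega_\mathcal{T}$.

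Next I would treat $S$ and $R$ simultaneously, since their sources are cross-coupled: $S$ is fed by $\beta N=\beta(S+I+R)$ and $R$ by $uS$, so neither sign can be settled in isolation. Testing the $S$- and $R$-equations with $S_-$ and $R_-$ respectively and adding, the terms $-\mu SI$, $-\xi S$, $-uS$ (in $S$) and $-\xi R$ (in $R$) are dissipative or vanish once $I\ge 0$ and $u>0$ are used, while the coupling terms $-\beta\int_\Omega R\,S_-\,dx$ and $-\int_\Omega uS\,R_-\,dx$ are supported on $\{R<0\}$ and $\{S<0\}$ and are bounded by Young's inequality through $\int_\Omega S_-R_-\,dx\le\tfrac12(\|S_-\|_{L^2(\Omega)}^2+\|R_-\|_{L^2(\Omega)}^2)$. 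Collecting terms, I would obtain
\[
\tfrac12\,{ }^{\mathcal{C}}\mathcal{D}^{\alpha}_{t}\big(\|S_-\|_{L^2(\Omega)}^2+\|R_-\|_{L^2(\Omega)}^2\big)\le C\big(\|S_-\|_{L^2(\Omega)}^2+\|R_-\|_{L^2(\Omega)}^2\big),
\]
and then conclude exactly as before, with $S_-(0)=R_-(0)=0$, that $S\ge 0$ and $R\ge 0$.

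I expect the main obstacle to be twofold. Analytically, the convexity inequality for ${ }^{\mathcal{C}}\mathcal{D}^{\alpha}_{t}$ is stated in Lemma~\ref{L1} only for the quadratic functional and for smooth (or $H^1$) arguments, whereas $w_-$ is merely Lipschitz; making the first display rigorous requires approximating $\Phi$ by smooth convex functions (or regularizing $w_-$) and passing to the limit, which is the delicate technical point. Structurally, the difficulty is the $S$--$R$ coupling, which is precisely why these two components must be estimated together rather than separately; the favorable signs of $-\mu SI$, $-\xi S$, $-uS$, $-\xi R$ and of $-\kappa I$ (once $I\ge 0$) are what make the combined Gronwall estimate close.
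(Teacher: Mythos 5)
Your proposal is correct, and its first half coincides with the paper's own proof: both rest on the negative-part decomposition, the Caputo energy inequality of Lemma~\ref{L1} (your convexity refinement with $\Phi(w)=w_-^2$ is exactly what the paper uses tacitly when it writes $\tfrac12\,{ }^{\mathcal{C}} \mathcal{D}^{\alpha}_{t}\|I^-\|_{L^2(\Omega)}^2$ after testing with $I^-$), the sign fact $\int_\Omega(\Delta_p w)\,w_-\,dx\ge 0$, and the fractional Gronwall argument via \eqref{E2.5}--\eqref{E2.6}, with the component $I$ handled first in the same way. The genuine difference is the second half. The paper treats $R$ \emph{alone}: after $I\ge 0$ it tests the $R$-equation with $R^-$, keeps the terms $\kappa\int_\Omega I R^-\,dx$ and $\int_\Omega uSR^-\,dx$, bounds them by Cauchy--Schwarz as $2\kappa\|I\|_{L^2(\Omega)}\|R^-\|_{L^2(\Omega)}+2C\|S\|_{L^2(\Omega)}\|R^-\|_{L^2(\Omega)}$ --- a bound \emph{linear} in $\|R^-\|_{L^2(\Omega)}$ --- and then passes, with only the remark ``$L^{2}(\Omega)\subset L^{1}(\Omega)$'', to a bound \emph{quadratic} in $\|R^-\|_{L^2(\Omega)}$ so that Gronwall applies; $S$ is then dispatched with ``the same methodology''. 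That linear-to-quadratic step is not justified as written, precisely because the sign of $S$ is still unknown at that stage. Your coupled treatment of $S$ and $R$ avoids the problem: splitting $R=R_+-R_-$ and $S=S_+-S_-$ inside the cross terms $-\beta\int_\Omega R\,S_-\,dx$ and $-\int_\Omega uS\,R_-\,dx$, discarding the favorably signed halves, and closing with Young's inequality produces a right-hand side $C\bigl(\|S_-\|_{L^2(\Omega)}^2+\|R_-\|_{L^2(\Omega)}^2\bigr)$ that is genuinely quadratic, so the joint fractional Gronwall inequality forces both negative parts to vanish. In short, your structural observation that the $S$--$R$ coupling through $\beta N$ and $uS$ demands a simultaneous estimate repairs the weakest step of the published argument. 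The remaining caveats are common to both proofs: the Caputo chain-rule inequality is applied to the merely Lipschitz map $w\mapsto w_-$ (you rightly flag the need for smoothing; the paper is silent on this), and both need an $L^\infty$-type bound on $S$ (the paper writes $S\le N$ with $N$ treated as a constant), so your version is no less rigorous than the paper's and is structurally sounder.
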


\begin{proof}
We begin by establishing the nonnegativity of $I$.
Let us express $I$ as the difference of its positive
and negative components:
$$
I = I^+ - I^-,
$$
where
$$
\begin{array}{l}\vspace{0.15cm}
I^+(x, t) = \sup\{I(x, t), 0\}
\ \text{ and } \ I^-(x, t) = \sup\{-I(x, t), 0\}.
\end{array}
$$
Now, by multiplying the second equation of the system \eqref{E3.1}--\eqref{E3.3}
by $I^-$, and integrating over $\Omega$, we can use \eqref{E2.4}
to derive the inequality
$$
\frac{1}{2} { }^{\mathcal{C}} \mathcal{D}^{\alpha}_{t}
\left\|I^-\right\|_{L^2(\Omega)}^2 + \lambda\int_{\Omega}
\left|\nabla I\right|^{p-2} \left(\nabla I^-\right)^{2} d x
\leq \mu\int_{\Omega} S (I^-)^2 dx + (\xi+\kappa)\int_{\Omega} (I^-)^2 dx.
$$
In simpler terms,
$$
\frac{1}{2}^{\mathcal{C}} \mathcal{D}^{\alpha}_{t}\left\|I^-\right\|_{L^2(\Omega)}^2
\leq \mu\int_{\Omega} S (I^-)^2 dx \leq \mu N\int_{\Omega} (I^-)^2 dx.
$$
Utilizing \eqref{E2.5} and \eqref{E2.6}, we can further simplify the inequality to
$$
\left\|I^-\right\|_{L^2(\Omega)}^2 \leq \left\|I_0^-\right\|_{L^2(\Omega)}^2
+ \frac{2\mu N}{\Gamma(\alpha)}\int_{0}^{t}
(t - y)^{\alpha-1} \left\|I^-(y)\right\|_{L^2(\Omega)}^2 dy.
$$
By applying Gronwall's inequality, we deduce that
$$
\left\|I^-\right\|_{L^2(\Omega)}^2 \leq \left\|I_0^-\right\|_{L^2(\Omega)}^2
\exp\left[ \frac{2\mu N}{\Gamma(\alpha)} \int_0^t (t-y)^{\alpha-1} dy\right].
$$
This implies that $I^-$ must be zero and, therefore, $I$ is positive.
The same reasoning applies to $R$. By multiplying the third equation
of \eqref{E3.1}--\eqref{E3.3} by $R^-$, one obtains
$$\begin{aligned}
&\frac{1}{2}^{\mathcal{C}} \mathcal{D}^{\alpha}_{t}
\left\|R^-\right\|_{L^2(\Omega)}^2 + \lambda\int_{\Omega}
\left|\nabla R\right|^{p-2} \left(\nabla R^-\right)^{2} d x
\\&\leq \kappa\int_{\Omega} I R^- dx
- \xi\int_{\Omega} (R^-)^2 dx + \int_{\Omega} u S R^- dx.
\end{aligned}
$$
This can be simplified as
$$
{ }^{\mathcal{C}} \mathcal{D}^{\alpha}_{t}\left\|R^-\right\|_{L^2(\Omega)}^2
\leq 2\kappa\int_{\Omega} I R^- dx + 2\int_{\Omega} u S R^- dx.
$$
As $u$ belongs to $U_{ad}$, by the Cauchy--Schwarz inequality,
there exists a constant $C$ such that
$$
{ }^{\mathcal{C}} \mathcal{D}^{\alpha}_{t}\left\|R^-\right\|_{L^2(\Omega)}^2
\leq 2\kappa \left\|I\right\|_{L^2(\Omega)} \left\|R^-\right\|_{L^2(\Omega)}
+ 2C\left\|S\right\|_{L^2(\Omega)} \left\|R^-\right\|_{L^2(\Omega)}.
$$
Because $L^2(\Omega) \subset L^1(\Omega)$, there exists a constant $C$ such that
$$
{ }^{\mathcal{C}} \mathcal{D}^{\alpha}_{t}\left\|R^-\right\|_{L^2(\Omega)}^2
\leq C \left( \left\|I\right\|_{L^2(\Omega)}^2
+ \left\|S\right\|_{L^2(\Omega)}^2 \right) \left\|R^-\right\|_{L^2(\Omega)}^2.
$$
Now, with the help of \eqref{E2.5} and \eqref{E2.6}, we can state that
$$\begin{aligned}
&\left\|R^-\right\|_{L^2(\Omega)}^2 \\& \leq \left\|R_0^-\right\|_{L^2(\Omega)}^2
\\& \quad + \frac{C}{\Gamma(\alpha)}\int_{0}^{t} (t - y)^{\alpha-1}
\left(\left\|I(y)\right\|_{L^2(\Omega)}^2
+ \left\|S(y)\right\|_{L^2(\Omega)}^2 \right)
\left\|R^-(y)\right\|_{L^2(\Omega)}^2 dy.
\end{aligned}
$$
Therefore,
$$\begin{aligned}
&\left\|R^-\right\|_{L^2(\Omega)}^2 \\& \leq \left\|R_0^-\right\|_{L^2(\Omega)}^2
\exp\left[\frac{C}{\Gamma(\alpha)}\int_{0}^{t} (t - y)^{\alpha-1}
\left(\left\|I(y)\right\|_{L^2(\Omega)}^2
+ \left\|S(y)\right\|_{L^2(\Omega)}^2 \right)dy\right].
\end{aligned}
$$
This leads to the conclusion that $R^-$ must be zero and, consequently,
$R$ is nonnegative. Using the same methodology, we can establish
that $S$ is also positive.
\end{proof}


\section{Existence of an optimal solution}
\label{S5}

To demonstrate the existence of an optimal control
using the technique of minimizing sequences
we make use of Lemma~\ref{L5}. Before, we recall
the definition of the Mittag--Leffler function $E_{\alpha}$.
Let $\omega \in \mathbb{C}$ such that $Re(\omega)>0$. Then,
\begin{equation}
\label{eq:MLf}
E_{\alpha}(\omega)
= \sum_{k=0}^{\infty} \frac{\omega^{k}}{\Gamma(\alpha k +1)}.
\end{equation}

\begin{lemma}[See \cite{SidiAmmi2022}]
\label{L5}
Let $\omega \in L^{\infty}\left(0, \mathcal{T};
L^{2}(\Omega)\right) \cap H^{1}\left(0, \mathcal{T};
L^{1}(\Omega)\right)$, where $b>0$.
There exists a constant $k>0$ such that
$$
\frac{k}{E_{\alpha}\left(-\gamma \mathcal{T}^{\alpha}\right)}
\|\omega\|_{L^{\infty}\left(0, \mathcal{T}; L^{1}(\Omega)\right)}
\geq \left\|\partial_{t} \omega\right\|_{L^{1}
\left(0, \mathcal{T}; L^{1}(\Omega)\right)},
$$
where $E_{\alpha}$ is the Mittag--Leffler function \eqref{eq:MLf}.
\end{lemma}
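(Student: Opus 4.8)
The plan is to obtain the $L^{1}(0,\mathcal{T};L^{1}(\Omega))$ bound on the classical time-derivative $\partial_t\omega$ from quantitative control of its Caputo counterpart, exploiting the explicit representation of $\omega$ through the Mittag--Leffler function. The natural starting point is the identity
${ }^{\mathcal{C}} \mathcal{D}^{\alpha}_{t}\omega = I^{1-\alpha}(\partial_t\omega)$,
which follows at once by comparing \eqref{E2.1} with Definition~\ref{D2} (taking order $1-\alpha$); together with Lemma~\ref{L2} it shows that the fractional and the ordinary derivatives determine one another. Since $\omega\in H^{1}(0,\mathcal{T};L^{1}(\Omega))$, the derivative $\partial_t\omega$ exists in $L^{1}(\Omega)$ for a.e.\ $t$, so the task reduces to a quantitative estimate of its time integral in terms of the $L^{\infty}(0,\mathcal{T};L^{1}(\Omega))$ size of $\omega$.

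First I would use that, in the intended application, $\omega$ is a state component governed by the linear-plus-bounded part of \eqref{E3.8}, i.e.\ it solves a Caputo equation ${ }^{\mathcal{C}} \mathcal{D}^{\alpha}_{t}\omega+\gamma\omega=g$ with $\gamma>0$ the effective reaction constant and $g$ bounded by Theorem~\ref{T1}. The variation-of-constants formula then gives
\begin{equation*}
\omega(t) = E_{\alpha}(-\gamma t^{\alpha})\,\omega(0)
+ \int_{0}^{t} (t-s)^{\alpha-1} E_{\alpha,\alpha}\!\left(-\gamma (t-s)^{\alpha}\right) g(s)\,ds,
\end{equation*}
where $E_{\alpha,\alpha}$ denotes the two-parameter Mittag--Leffler function. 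Differentiating in $t$ and using $\tfrac{d}{dt}E_{\alpha}(-\gamma t^{\alpha}) = -\gamma t^{\alpha-1}E_{\alpha,\alpha}(-\gamma t^{\alpha})$ yields a representation of $\partial_t\omega$ whose kernel carries an integrable $t^{\alpha-1}$ singularity.

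Next I would integrate $\|\partial_t\omega(t)\|_{L^{1}(\Omega)}$ over $[0,\mathcal{T}]$. The singular factor is absorbed through $\int_{0}^{\mathcal{T}} t^{\alpha-1}\,dt=\mathcal{T}^{\alpha}/\alpha<\infty$, while the complete monotonicity and boundedness of $t\mapsto E_{\alpha}(-\gamma t^{\alpha})$ and $t\mapsto E_{\alpha,\alpha}(-\gamma t^{\alpha})$ on $[0,\mathcal{T}]$ give uniform control of the Mittag--Leffler factors. Combining these estimates produces $\|\partial_t\omega\|_{L^{1}(0,\mathcal{T};L^{1}(\Omega))}\leq C\,\|\omega\|_{L^{\infty}(0,\mathcal{T};L^{1}(\Omega))}$ for some $C=C(\alpha,\gamma,\mathcal{T})$. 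Finally, since $t\mapsto E_{\alpha}(-\gamma t^{\alpha})$ is decreasing, one has $0<E_{\alpha}(-\gamma\mathcal{T}^{\alpha})\leq E_{\alpha}(-\gamma t^{\alpha})\leq 1$ on $[0,\mathcal{T}]$, and I would write $C=k/E_{\alpha}(-\gamma\mathcal{T}^{\alpha})$ by absorbing this minimal value into the constant, thereby recovering the stated inequality.

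The main obstacle is the passage from the Caputo derivative back to the classical one: inverting ${ }^{\mathcal{C}} \mathcal{D}^{\alpha}_{t}\omega = I^{1-\alpha}(\partial_t\omega)$ amounts to applying a fractional differentiation of order $1-\alpha$, which is singular, and one must justify the interchange of $\tfrac{d}{dt}$ with the fractional integral together with the resulting a.e.\ pointwise representation inside the low-regularity class $H^{1}(0,\mathcal{T};L^{1}(\Omega))$ rather than in a smooth setting. Equally delicate is tracking the Mittag--Leffler factor with the correct sign, so that dividing by $E_{\alpha}(-\gamma\mathcal{T}^{\alpha})<1$ genuinely \emph{weakens} the estimate into the required form; this is guaranteed by the monotone decay of $E_{\alpha}(-\gamma t^{\alpha})$ on the bounded interval.
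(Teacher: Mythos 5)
First, be aware that the paper itself contains no proof of Lemma~\ref{L5}: the lemma is imported verbatim from \cite{SidiAmmi2022} (complete with the undefined symbols $b$ and $\gamma$) and is used as a black box in the proof of Theorem~\ref{T4}, so there is no internal argument to compare yours against. Your instinct that the statement cannot be meant for arbitrary $\omega$ in the stated class is sound: with a constant $k$ independent of $\omega$ the inequality is false (take $\omega_n(t,x)=\sin(nt)\,\phi(x)$, whose $L^{\infty}(0,\mathcal{T};L^{1}(\Omega))$ norm stays bounded while $\|\partial_t\omega_n\|_{L^{1}(0,\mathcal{T};L^{1}(\Omega))}\sim n$), so the estimate must exploit the underlying evolution equation, exactly as you assume. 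Note, however, that \cite{SidiAmmi2022} works with the Atangana--Baleanu--Caputo derivative, whose kernel is the bounded, nonsingular function $E_{\alpha}$ itself; the factor $E_{\alpha}(-\gamma\mathcal{T}^{\alpha})^{-1}$ arises there as the reciprocal of a positive lower bound for that kernel. That mechanism has no counterpart for the singular-kernel Caputo derivative used in this paper, and your proof has to invent a substitute --- which is where it breaks.

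The genuine gap is the differentiation step. In your variation-of-constants formula the inhomogeneous term is the convolution of $g$ with $K(\tau)=\tau^{\alpha-1}E_{\alpha,\alpha}(-\gamma\tau^{\alpha})$, and near $\tau=0$ one has $K^{\prime}(\tau)\sim(\alpha-1)\tau^{\alpha-2}/\Gamma(\alpha)$, which is \emph{not} integrable; your claim that differentiating in $t$ produces only an integrable $t^{\alpha-1}$ singularity is true for the homogeneous term $E_{\alpha}(-\gamma t^{\alpha})\omega(0)$ alone. The legitimate formula, $\frac{d}{dt}(K\ast g)(t)=K(t)g(0)+(K\ast\partial_t g)(t)$, requires $g\in W^{1,1}$ and yields a bound in terms of $\|\partial_t g\|$ --- circular here, because $g$ collects reaction terms built from the very states whose time derivatives the lemma is supposed to control. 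This failure is not repairable by more careful bookkeeping: if ${}^{\mathcal{C}}\mathcal{D}^{\alpha}_{t}\omega=h$, i.e.\ $\omega=\omega(0)+I^{\alpha}h$, then $\|h\|_{L^{\infty}}\leq 1$ is compatible with $\|\partial_t\omega\|_{L^{1}}$ being arbitrarily large (take $h(s)=\sin(Ns)$, for which $\partial_t\omega$ is of order $N^{1-\alpha}$), so no argument using only $L^{\infty}$-type control of the forcing --- which is all Theorem~\ref{T1} provides --- can succeed. A secondary issue: your $g$ also contains $\lambda\Delta_p\omega$, which Theorem~\ref{T1} bounds only in the dual space $L^{p^{\prime}}(0,\mathcal{T};\mathcal{W}^{\prime}(\Omega))$, not in $L^{1}(\Omega)$, so the $L^{1}(\Omega)$-valued Mittag--Leffler representation you start from is not available in the first place.
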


The primary result of this section can now be formulated and proved as follows.

\begin{theorem}
\label{T4}
Model \eqref{E3.1}--\eqref{E3.3} admits at least one optimal solution $\nu^*(u^*)$
in $\left(L^\infty(\Omega_\mathcal{T})\right)^3$ that minimizes \eqref{E3.4}.
\end{theorem}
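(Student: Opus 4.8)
The plan is to apply the direct method of the calculus of variations: build a minimizing sequence of admissible controls, extract a subsequence that converges in suitable topologies, and show that its limit is both an admissible control and a minimizer. First I would set
\[
\mathfrak{m} = \inf\Big\{\mathcal{J}(\nu, u) \; : \; u \in U_{ad}, \ \nu = \nu(u) \text{ solves } \eqref{E3.8}\Big\}.
\]
Since $\mathcal{J}\geq 0$ by its definition \eqref{E3.4}, this infimum is finite and nonnegative, so there is a minimizing sequence $(u_n)_n\subset U_{ad}$ with associated states $\nu_n=(S_n,I_n,R_n)$ — each guaranteed by Theorem~\ref{T1} — such that $\mathcal{J}(\nu_n,u_n)\to\mathfrak{m}$.

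Next I would collect uniform bounds. By the definition \eqref{E3.5}, $(u_n)_n$ is bounded by $1$ in $L^\infty(\Omega_\mathcal{T})$, and the a priori estimates from the proof of Theorem~\ref{T1} (in particular the energy bound \eqref{E4.7} and the control of the states in $\mathcal{H}(\Omega)$, which carry over since the control enters only through the bounded terms $uS$) show that $(\nu_n)_n$ is bounded in $L^\infty(0,\mathcal{T};\mathcal{H}(\Omega))\cap L^p(0,\mathcal{T};\mathcal{W}(\Omega))$ uniformly in $n$. Applying Lemma~\ref{L5} to each component then supplies a uniform bound on $\partial_t\nu_n$ in $L^1(0,\mathcal{T};L^1(\Omega))$, giving the time-regularity needed for compactness. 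Passing to a subsequence (not relabeled), I would extract convergences analogous to \eqref{E4.8}: $u_n\rightharpoonup u^*$ weak-star in $L^\infty(\Omega_\mathcal{T})$; $\nu_n\rightharpoonup\nu^*$ weak-star in $L^\infty(0,\mathcal{T};\mathcal{H}(\Omega))$ and weakly in $L^p(0,\mathcal{T};\mathcal{W}(\Omega))$; $\mathcal{L}\nu_n\rightharpoonup\chi$ weakly in $L^{p^\prime}(0,\mathcal{T};\mathcal{W}^\prime(\Omega))$; and, crucially, $\nu_n\to\nu^*$ strongly in $\mathcal{H}(\Omega_\mathcal{T})$ and a.e., the strong convergence coming from the compact embedding of $W^{1,p}_0(\Omega)$ into $L^2(\Omega)$ together with the time bound from Lemma~\ref{L5}, via an Aubin--Lions--Simon argument.

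The core step is to show that $\nu^*$ is precisely the state generated by $u^*$, i.e. that the limit pair still satisfies \eqref{E3.8}. The linear terms pass to the limit by weak convergence. For the reaction terms I would use the strong $L^2$ convergence by splitting $S_nI_n-S^*I^*=(S_n-S^*)I_n+S^*(I_n-I^*)$, exactly as in the derivation of \eqref{E4.9}, and $u_nS_n-u^*S^*=u_n(S_n-S^*)+(u_n-u^*)S^*$, where the first summand vanishes by strong convergence of $S_n$ and the second by the weak-star convergence $u_n\rightharpoonup u^*$ paired with $S^*\in L^2(\Omega_\mathcal{T})$. For the $p$-Laplacian I would reproduce the monotonicity (type-$M$) and hemicontinuity argument of Theorem~\ref{T1} to identify $\chi=\mathcal{L}\nu^*$, as in \eqref{E4.11}. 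Finally, since the set $\{v:\|v\|_{L^\infty(\Omega_\mathcal{T})}\leq 1,\ v\geq 0\}$ is convex and weak-star closed, the limit $u^*$ remains admissible.

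It then remains to check optimality. The functional \eqref{E3.4} is a finite sum of squared $L^2$-norms, hence convex and strongly continuous, and therefore weak-star lower semicontinuous, so
\[
\mathcal{J}(\nu^*,u^*)\leq\liminf_{n\to\infty}\mathcal{J}(\nu_n,u_n)=\mathfrak{m},
\]
which forces $(\nu^*,u^*)$ to attain the infimum and be optimal. \textbf{The main obstacle} is the passage to the limit in the nonlinear $p$-Laplacian term: unlike the reaction products, it cannot be resolved by strong $L^2$ convergence alone and requires the monotonicity machinery, so adapting the type-$M$ identification of Theorem~\ref{T1} to the present minimizing-sequence setting is the delicate ingredient of the argument.
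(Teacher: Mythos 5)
Your proposal follows essentially the same route as the paper's proof: a minimizing sequence, the a priori bounds inherited from Theorem~\ref{T1}, Lemma~\ref{L5} combined with an Aubin-type compactness argument to get strong convergence in $\mathcal{H}(\Omega_\mathcal{T})$, the same splittings of the bilinear reaction terms, the type-$M$/monotonicity identification of the $p$-Laplacian limit, and the convexity/closedness of $U_{ad}$ to keep the limit control admissible. If anything, you are slightly more careful than the paper on two points: you pass to the limit in $u_nS_n$ by pairing weak-star convergence of $u_n$ with strong convergence of $S_n$ (the paper simply asserts strong $L^2$ convergence of $u^n$ on a subsequence), and you make the weak lower semicontinuity of $\mathcal{J}$ explicit, which the paper leaves implicit when it ``takes the limit'' in the objective.
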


\begin{proof}
Let $\left((\nu^n, u^n)\right)_{k}$ be such that
\[
\mathcal{J}(\nu^*,u^*) = \inf\left\{\mathcal{J}(\nu, u)\right\}
= \lim_{n\rightarrow\infty} \mathcal{J}(\nu^n, u^n),
\]
where $\nu^n = (\nu_i^n)_{i=1,2,3}$ and $u^n\in U_{ad}$.
The pair $(\nu^n, u^n)$ satisfies system
\begin{equation}
\label{E5.1}
\left\{\begin{array}{l}
{ }^{\mathcal{C}} \mathcal{D}^{\alpha}_{t}\nu_1^n
= \mathcal{L}\nu_1^n + \mu\left(\nu_1^n + \nu_2^n
+ \nu_3^n\right)-\beta \nu_1^n \nu_2^n - \xi \nu_1^n - u^n \nu_1^n, \\
{ }^{\mathcal{C}} \mathcal{D}^{\alpha}_{t}\nu_2^n
= \mathcal{L}\nu_2^n + \beta \nu_1^n \nu_2^n - (\xi + \kappa) \nu_2^n,\\
{ }^{\mathcal{C}} \mathcal{D}^{\alpha}_{t}\nu_3^n
= \mathcal{L}\nu_3^n + \kappa \nu_2^n - \xi \nu_3^n + u^n \nu_1^n,
\end{array}\right.
\text{ in } \Omega_\mathcal{T},
\end{equation}
with the conditions
\begin{equation}
\label{E5.2}
\nabla \nu_i^n\cdot\vec{n} = 0,
\ \text{ on }  \partial\Omega_\mathcal{T}, \ i=1,2,3,
\end{equation}
and
\begin{equation}
\label{E5.3}
\nu_i^n(0) = \nu_i^0, \ \text{ in }  \Omega, \ i=1,2,3.
\end{equation}
Let $i\in\{1,2,3\}$. According to Theorem~\ref{T1}, the sequence $(\nu_i^n)$
is bounded in $L^\infty(0, \mathcal{T}; \mathcal{H}(\Omega))$
and in $L^p(0, \mathcal{T}; \mathcal{W}(\Omega))$.
Hence, there exists a positive constant $c$ such that
$$
\| { }^{\mathcal{C}} \mathcal{D}^{\alpha}_{t}\nu_i^n
- \mathcal{L}\nu_i^n \|_{L^2(\Omega_\mathcal{T})} \leq c.
$$
As a result, there exists a subsequence of ($\nu^n$),
still denoted as ($\nu^n$), such that
$$
\begin{aligned}
& { }^{\mathcal{C}} \mathcal{D}^{\alpha}_{t}\nu^n
- \mathcal{L}\nu^n \rightharpoonup  \phi
\text{ weakly in } \mathcal{H}(\Omega_\mathcal{T}), \\
& \nu^n \rightharpoonup  \nu^* \text { weakly in }
L^p(0, \mathcal{T}; \mathcal{W}(\Omega)).
\end{aligned}
$$
By utilizing \eqref{E4.8}, \eqref{E4.10}, and \eqref{E4.11},
we deduce that
$$
\phi = { }^{\mathcal{C}}
\mathcal{D}^{\alpha}_{t}\nu^* - \mathcal{L}\nu^*.
$$
Let us now define the space
$$
E = \left\{w\in L^p(0, \mathcal{T}; W^{1,p}_0(\Omega))
/ \partial_t w\in L^1(0, \mathcal{T}; L^1(\Omega)) \right\}.
$$
Since $W^{1,p}_0(\Omega)$ is compactly embedded in
$L^p(\Omega)\subseteq L^2(\Omega)$, it follows that
$(\nu_i^n)$ is compact in $L^2(\Omega)$. By applying
Lemma~\ref{L5}, we conclude that $(\partial_t \nu_i^n)$
is bounded in $L^1(0, \mathcal{T}; L^1(\Omega))$. Using
the classical Aubin argument \cite{Moussa2015}, we establish
the compact embedding of the space $E$ in $L^2(\Omega_\mathcal{T})$.
Therefore, there exists a subsequence of ($\nu^n$),
denoted as ($\nu^n$), such that
$$
\begin{aligned}
& \nu^n \rightharpoonup  \nu^*
\text{ weakly in } \mathcal{H}(\Omega_\mathcal{T})
\text{ and in } L^\infty\left(0, \mathcal{T}; \mathcal{H}(\Omega)\right), \\
& \nu^n \longrightarrow  \nu^*
\text{ strongly in } \mathcal{H}(\Omega_\mathcal{T}), \\
& \nu^n \longrightarrow  \nu^*
\text{ a.e. in } \mathcal{H}(\Omega_\mathcal{T}),\\
& \nu^n(T) \longrightarrow  \nu^*(T)
\text{ in } \mathcal{H}(\Omega_\mathcal{T}).
\end{aligned}
$$
Writing $\nu_1^n \nu_2^n - \nu_1^* \nu_2^*
= \left(\nu_1^n-\nu_1^*\right) \nu_2^n + \nu_1^*\left(\nu_2^n - \nu_2^*\right)$,
and by leveraging the convergence $\nu_i^n \longrightarrow \nu_i^*$ in
$L^2(\Omega_\mathcal{T})$ and the boundedness of $(\nu_1^n), (\nu_2^n)$
in $L^{\infty}(\Omega_\mathcal{T})$, we conclude that $\nu_1^n \nu_2^n
\longrightarrow \nu_1^* \nu_2^*$ in $L^2(\Omega_\mathcal{T})$.
Additionally, we have $u^n \rightarrow u^*$
in $L^2(\Omega_\mathcal{T})$ on a subsequence of
$(u^n)$ denoted again as $(u^n)$.
Utilizing the closeness and convexity of $U_{a d}$ in $L^2(\Omega_\mathcal{T})$,
we conclude that $U_{a d}$ is weakly closed. Therefore, $u^* \in U_{a d}$.
In a similar manner, $u^n \nu_1^n \longrightarrow u^* \nu_1^*$
in $L^2(\Omega_\mathcal{T})$.
Afterwards, we take the limit in the system satisfied by $\nu^n$ as
$n \rightarrow \infty$, finding that $\left(\nu^*, u^*\right)$
is an optimal solution of \eqref{E3.1}--\eqref{E3.5}.
\end{proof}


\section{Necessary optimality conditions}
\label{S6}

To establish necessary optimality conditions, consider $\nu^{*}(u^{*})$
an optimal solution and $u^{\varepsilon} = u^{*} + \varepsilon u \in U_{ad}$
with $u \in U_{ad}$. Let $\nu^{\varepsilon} = (\nu_{i}^{\varepsilon})_{i=1,2,3}
= (\nu_{1}, \nu_{2}, \nu_{3})(u^{\varepsilon})$ and $\nu^{*}
= (\nu_{i}^{*})_{i=1,2,3} = (\nu_{1}, \nu_{2}, \nu_{3})(u^{*})$ be
solutions of \eqref{E3.1}--\eqref{E3.3}. By subtracting the system associated
with $\nu^{*}$ from the one corresponding to $\nu^{\varepsilon}$, and noting
that $\nu_{i}^{\varepsilon} = \nu_{i}^{*} + \varepsilon y_{i}^{\varepsilon}$
for any $i = 1, 2, 3$, we obtain that
\begin{equation}
\label{E6.1}
{ }^{\mathcal{C}} \mathcal{D}^{\alpha}_{t} y^{\varepsilon}
=  \lambda\frac{\Delta_p \nu^{\varepsilon}
-  \Delta_p \nu^{*}}{\varepsilon}
+ \frac{\Psi\left(\nu^{\varepsilon}\right)
- \Psi\left(\nu^{*}\right)}{\varepsilon}, \ (x, t)\in \Omega_\mathcal{T},
\end{equation}
with
\begin{equation}
\label{E6.2}
\nabla y_{i}^\varepsilon\cdot \vec{n}
= 0, \quad (x, t) \in \Sigma_T,
\end{equation}
and
\begin{equation}
\label{E6.3}
y_{i}^{\varepsilon}(x, 0)=0 \quad x \in \Omega.
\end{equation}
A straightforward calculation yields
$$
\frac{\Psi\left(\nu^{\varepsilon}\right)
- \Psi\left(\nu^{*}\right)}{\varepsilon}
= \mathcal{N}_{\varepsilon} y^{\varepsilon} + \mathcal{F}u,
$$
with
$$
\mathcal{N}_{\varepsilon}
=\left(
\begin{array}{ccc}
\beta-\mu \nu_{2}^{\varepsilon}- \xi -u^{\varepsilon}
& -\mu \nu_{1}^{*}+\beta & \beta \\
\mu \nu_{2}^{\varepsilon}
& \mu \nu_{1}^{*}- \xi - \kappa & 0 \\
u^{\varepsilon} & \kappa & -\xi
\end{array}\right)
\ \text{ and } \
\mathcal{F}=\left(\begin{array}{c}
-\nu_{1}^{*} \\
0 \\
\nu_{1}^{*}
\end{array}\right).
$$
On the other hand,
$$
\begin{array}{ll}
\Delta_p \nu^{\varepsilon} -  \Delta_p \nu^{*}
&= \operatorname{div}(\psi(\nabla \nu^{\varepsilon}))
- \operatorname{div}(\psi(\nabla \nu^{*}))\\
&= \psi^{\prime}(\nabla \nu^{\varepsilon})\Delta \nu^{\varepsilon}
- \psi^{\prime}(\nabla \nu^{*})\Delta \nu^{*} \\
&= \psi^{\prime}(\nabla \nu^{\varepsilon})\left(\Delta \nu^{\varepsilon}
- \Delta \nu^{*}\right) + \left(\psi^{\prime}(\nabla \nu^{\varepsilon})
- \psi^{\prime}(\nabla \nu^{*})\right)\Delta \nu^{*}.
\end{array}
$$
Then,
$$
\frac{1}{\varepsilon} \left(\Delta_p \nu^{\varepsilon} -  \Delta_p \nu^{*}\right)
= \psi^{\prime}(\nabla \nu^{\varepsilon})\frac{\Delta \nu^{\varepsilon}
- \Delta \nu^{*}}{\varepsilon} + \frac{\psi^{\prime}(\nabla \nu^{\varepsilon})
- \psi^{\prime}(\nabla \nu^{*})}{\varepsilon}\Delta \nu^{*}.
$$
Applying the mean value theorem, we can find $\theta_\varepsilon$ such that
$$
\frac{1}{\varepsilon} \left(\Delta_p \nu^{\varepsilon} - \Delta_p \nu^{*}\right)
= \psi^{\prime}(\nabla \nu^{\varepsilon})\Delta y^{\varepsilon}
+ \psi^{\prime\prime}(\theta_\varepsilon)\frac{\nabla \nu^{\varepsilon}
- \nabla \nu^{*}}{\varepsilon}\Delta \nu^{*}.
$$
Since $y_i^\varepsilon, \nu_i^\varepsilon$ are bounded in
$L^p(0, \mathcal{T};W^{1, p}_0(\Omega))$, uniformly with respect
to $\varepsilon$, $\psi$ is a regular function. Thus,
$$
\begin{array}{ll}
\lim\limits_{\varepsilon \rightarrow 0} \frac{\Delta_p \nu^{\varepsilon}
-  \Delta_p \nu^{*}}{\varepsilon}
&= \psi^{\prime}(\nabla \nu^{*})\Delta y
+ \psi^{\prime\prime}(\nabla \nu^{*})\Delta \nu^{*}\nabla y\\
&= \psi^{\prime}(\nabla \nu^{*})\nabla\cdot\nabla y
+ \operatorname{div}\left(\psi^{\prime}(\nabla \nu^{*})\right)\nabla y \\
&= \operatorname{div}\left(\psi^{\prime}(\nabla \nu^{*})\nabla y\right).
\end{array}
$$
On the other hand, the elements of the matrix $\mathcal{N}_{\varepsilon}$
are also uniformly bounded with respect to $\varepsilon$. We can now proceed
to the limit in \eqref{E6.1}--\eqref{E6.3} as $\varepsilon \rightarrow 0$.
Considering \eqref{E5.1}, we arrive at
\begin{equation}
\label{E6.4}
\left\{\begin{array}{ll}
{ }^{\mathcal{C}} \mathcal{D}^{\alpha}_{t} y
= \lambda \operatorname{div}\left(\psi^{\prime}(\nabla \nu^{*})\nabla y\right)
+ \mathcal{N}y + \mathcal{F}u,
&\text { in } \Omega_\mathcal{T} \\
\left(\nabla y_{i}\cdot \vec{n}\right)_{1\leq i\leq 3} = 0,
&\text { on } \Sigma_T,\\
y(x, 0) = 0, &\text { in } \Omega,
\end{array}\right.
\end{equation}
where
$$
\mathcal{N}
= \left(
\begin{array}{ccc}
\beta-\mu \nu_{2}^{*}- \xi -u^{*}
& -\mu \nu_{1}^{*}+\beta & \beta \\
\mu \nu_{2}^{*} & \mu \nu_{1}^{*}- \xi - \kappa & 0 \\
u^{*} & \kappa & -\xi
\end{array}\right).
$$
Applying the theory of fractional differential equations,
it follows that system \eqref{E6.4} admits a solution \cite{Khan2019,Milici2019}.
This allows us to derive the adjoint problem associated with the variable $y$.
We introduce the adjoint vector, denoted as $\rho =\left(\rho_1, \rho_2, \rho_3\right)$,
defined by
$$
\int_{0}^{\mathcal{T}}\int_{\Omega} \left({ }^{\mathcal{C}}
\mathcal{D}^{\alpha}_{t} y - \lambda \operatorname{div}\left(\psi^{\prime}(\nabla \nu^{*})
\nabla y\right)\right) \rho dx dt = \int_{0}^{\mathcal{T}}
\int_{\Omega} \left(\mathcal{N}y + \mathcal{F}u\right) \rho dx dt.
$$
Because of
$$
\begin{array}{ll}
\int_{\Omega}\operatorname{div}\left(\psi^{\prime}(\nabla \nu^{*})\nabla y\right) \rho dx
&=  -\int_{\Omega} \psi^{\prime}(\nabla \nu^{*})\nabla y\cdot\nabla \rho dx
+ \int_{\partial\Omega} \rho\psi^{\prime}(\nabla \nu^{*})
\nabla y\cdot\vec{n} d\sigma\\
&= -\int_{\Omega} \psi^{\prime}(\nabla \nu^{*})\nabla y\cdot\nabla \rho dx\\
&=  \int_{\Omega} y \operatorname{div}\left( \psi^{\prime}(\nabla \nu^{*})\nabla \rho\right) dx
- \int_{\partial\Omega} y\psi^{\prime}(\nabla \nu^{*})\nabla \rho\cdot\vec{n} d\sigma\\
&=  \int_{\Omega} y \operatorname{div}\left( \psi^{\prime}(\nabla \nu^{*})\nabla \rho\right) dx,
\end{array}
$$
and Lemma~\ref{L4}, we have
$$
\int_{0}^{\mathcal{T}}({ }^{\mathcal{C}} \mathcal{D}^{\alpha}_{t} y) \ \rho d t
= \int_{0}^{\mathcal{T}} y \ ({ }^{\mathcal{C}}_{\mathcal{T}}
\mathcal{D}^{\alpha}_{t} \rho)  d t + \frac{1}{\Gamma(1-\alpha)}
\rho(x, \mathcal{T}) \int_{0}^{\mathcal{T}} y(t) (\mathcal{T}-t)^{-\alpha} d t.
$$
As a result, the dual system, corresponding to \eqref{E3.1}--\eqref{E3.3},
can be expressed as
\begin{equation}
\label{E6.5}
\left\{\begin{array}{ll}\vspace{0.15cm}
{ }^{\mathcal{C}}_{\mathcal{T}} \mathcal{D}^{\alpha}_{t} \rho
- \lambda \operatorname{div}\left(\psi^{\prime}(\nabla \nu^{*})\nabla \rho\right)
- \mathcal{N}\rho = \mathcal{M}^{*} \mathcal{M} \nu^{*},
&\text { in } \Omega_\mathcal{T} \\ \vspace{0.15cm}
\nabla \rho_{i}\cdot \vec{n} = 0, \ i=1,2,3, &\text { on } \Sigma_T,\\
\rho(x, \mathcal{T}) = \mathcal{M}^{*} \mathcal{M}
\nu^{*}(x, \mathcal{T}), &\text { in } \Omega,
\end{array}\right.
\end{equation}
with $\mathcal{M}$ the matrix defined by
$$
\mathcal{M} = \left(\begin{array}{lll}
0 & 0 & 0 \\
0 & 1 & 0 \\
0 & 0 & 0
\end{array}\right).
$$
By \cite{Khan2019,Milici2019}, system \eqref{E6.5} has a solution.

The following theorem establishes necessary optimality conditions for $u^*$.

\begin{theorem}
\label{T5}
If $u^{*}$ is a solution of our optimal control problem, then
\begin{equation}
\label{E6.6}
\begin{aligned}
u^{*} &=\min \left\{1, \max \left(-\frac{1}{\eta}
\mathcal{F}^{*} \rho, 0\right)\right\} \\
&=\min \left\{1, \max \left(0, -\frac{\nu_{1}^{*}}{\eta}
\left(\rho_{1}-\rho_{3}\right)\right)\right\},
\end{aligned}
\end{equation}
where $(\nu^*, u^*)$ is solution of \eqref{E3.1}--\eqref{E3.5} and
$\rho$ is a solution of \eqref{E6.5}.
\end{theorem}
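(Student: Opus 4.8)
The plan is to convert optimality of $u^{*}$ into a first-order variational inequality and then read off the pointwise formula by projecting onto the control constraints. Fix $w \in U_{ad}$ and set the admissible direction $u := w - u^{*}$, so that $u^{\varepsilon} := u^{*} + \varepsilon u \in U_{ad}$ for all $\varepsilon \in (0,1]$ by convexity of $U_{ad}$. Writing $\nu^{\varepsilon} = \nu^{*} + \varepsilon y^{\varepsilon}$ as in \eqref{E6.1}, each summand of $\mathcal{J}$ in \eqref{E3.4} is the square of a Hilbert-space norm, so $\mathcal{J}$ is G\^ateaux differentiable and, using $y^{\varepsilon} \to y$ with $y$ the solution of the sensitivity system \eqref{E6.4},
\[
\tfrac12\frac{d}{d\varepsilon}\mathcal{J}(u^{\varepsilon})\Big|_{\varepsilon=0^{+}} = \big(\mathcal{M}\nu^{*}(\cdot,\mathcal{T}),\mathcal{M}y(\cdot,\mathcal{T})\big)_{L^{2}(\Omega)} + \big(\mathcal{M}\nu^{*},\mathcal{M}y\big)_{L^{2}(\Omega_\mathcal{T})} + \eta\,(u^{*},u)_{L^{2}(\Omega_\mathcal{T})}.
\]
Since $u^{*}$ minimizes $\mathcal{J}$ over $U_{ad}$, this quantity is nonnegative for every $w \in U_{ad}$.

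The key step is to eliminate the sensitivity $y$ from the first two terms by means of the adjoint state $\rho$. I would pair the sensitivity equation \eqref{E6.4} with $\rho$ and integrate over $\Omega_\mathcal{T}$, transferring each operator onto $\rho$: the diffusion term moves across via the Green formula and the Neumann conditions in \eqref{E6.4}--\eqref{E6.5}, whose boundary integrals vanish exactly as in the computation preceding \eqref{E6.5}; the reaction term transfers through the matrix $\mathcal{N}$; and the Caputo derivative transfers through Lemma~\ref{L4}, where $y(\cdot,0)=0$ cancels the initial boundary term. The adjoint equation \eqref{E6.5} is built precisely so that its source $\mathcal{M}^{*}\mathcal{M}\nu^{*}$ reproduces the running-cost term $(\mathcal{M}\nu^{*},\mathcal{M}y)$ and its terminal datum $\rho(\cdot,\mathcal{T})=\mathcal{M}^{*}\mathcal{M}\nu^{*}(\cdot,\mathcal{T})$ reproduces the terminal-cost term $(\mathcal{M}\nu^{*}(\cdot,\mathcal{T}),\mathcal{M}y(\cdot,\mathcal{T}))$. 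Since the only forcing in \eqref{E6.4} is $\mathcal{F}u$, this duality collapses the two state terms into $(\mathcal{F}^{*}\rho,u)_{L^{2}(\Omega_\mathcal{T})}$, so the variational inequality becomes
\[
\int_{\Omega_\mathcal{T}}\big(\mathcal{F}^{*}\rho + \eta u^{*}\big)\,(w-u^{*})\,dx\,dt \geq 0, \qquad \forall\, w \in U_{ad}.
\]

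It remains to localize this inequality. Because admissible $w$ may be chosen to perturb $u^{*}$ upward on $\{u^{*} < 1\}$ and downward on $\{u^{*} > 0\}$, a standard Lebesgue-point argument yields the pointwise relations $\mathcal{F}^{*}\rho + \eta u^{*} = 0$ on $\{0 < u^{*} < 1\}$, $\mathcal{F}^{*}\rho + \eta u^{*} \geq 0$ on $\{u^{*} = 0\}$, and $\mathcal{F}^{*}\rho + \eta u^{*} \leq 0$ on $\{u^{*} = 1\}$. These are exactly the conditions characterizing the projection of $-\tfrac1\eta\mathcal{F}^{*}\rho$ onto $[0,1]$, whence $u^{*} = \min\{1,\max(0,-\tfrac1\eta\mathcal{F}^{*}\rho)\}$; inserting the explicit form of $\mathcal{F}$ then yields the second expression in \eqref{E6.6}. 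The main obstacle I expect is the duality of the second paragraph: since \eqref{E6.4} is only the formal linearization of a genuinely nonlinear $p$-Laplacian problem, one must first justify rigorously the convergence $\varepsilon^{-1}(\nu^{\varepsilon}-\nu^{*}) \to y$ in $L^{p}(0,\mathcal{T};\mathcal{W}(\Omega))$ and the legitimacy of the integrations by parts, and, most delicately, reconcile the terminal boundary term produced by Lemma~\ref{L4}, which carries the weight $\int_{0}^{\mathcal{T}} y(t)(\mathcal{T}-t)^{-\alpha}\,dt$ rather than $y(\cdot,\mathcal{T})$ itself, with the terminal cost; this mismatch is intrinsic to replacing the classical integration by parts by its Caputo counterpart.
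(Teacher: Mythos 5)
Your proposal is correct and follows essentially the same route as the paper's own proof: G\^ateaux differentiation of $\mathcal{J}$, elimination of the sensitivity $y$ by pairing the linearized system \eqref{E6.4} with the adjoint state of \eqref{E6.5}, and identification of $u^{*}$ as the projection of $-\frac{1}{\eta}\mathcal{F}^{*}\rho$ onto the admissible interval. The two points where you are more explicit than the paper --- the Lebesgue-point localization of the variational inequality, and the mismatch between the Caputo boundary term $\frac{1}{\Gamma(1-\alpha)}\rho(\cdot,\mathcal{T})\int_{0}^{\mathcal{T}}y(t)(\mathcal{T}-t)^{-\alpha}\,dt$ produced by Lemma~\ref{L4} and the terminal-cost term involving $y(\cdot,\mathcal{T})$ itself --- are both genuine: the paper asserts the projection formula without the localization argument and silently passes over that boundary-term discrepancy in its duality computation, so your flagged concern identifies a real gap in the published argument rather than in yours.
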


\begin{proof}
Let us assume $(\nu^*, u^*)$ to be an optimal solution
to \eqref{E3.1}--\eqref{E3.5}. This allows us to state the following:
$$
\begin{aligned}
\mathcal{J}(\nu^*, u^*)
&=\left\|\nu_{2}^*\right\|_{L^{2}(\Omega_\mathcal{T})}^{2}+\left\|\nu_{2}^*(\cdot,
\mathcal{T})\right\|_{L^{2}(\Omega)}^{2}+\eta\|u^*\|_{L^{2}(\Omega_\mathcal{T})}^{2} \\
&=\int_{0}^{\mathcal{T}}\|\mathcal{M} \nu^*\|_{\mathcal{H}(\Omega)}^{2} d t
+\|\mathcal{M} \nu^*(\cdot, \mathcal{T})\|_{\mathcal{H}(\Omega)}^{2}
+\eta\|u^*\|_{L^{2}(\Omega_\mathcal{T})}^{2}.
\end{aligned}
$$
For a sufficiently small $\varepsilon>0$, considering that the minimum
of the objective functional is achieved at $u^{*}$, the inequality
$$
\frac{\mathcal{J}\left(u^{*}+\varepsilon w\right)
	- \mathcal{J}(u)}{\varepsilon} \geq 0
$$
is equivalent to
$$
\int_{0}^{\mathcal{T}}\left(F^{*} \rho
+ \eta u^{*}, w\right)_{L^{2}(\Omega)} d t \geq 0, \ \forall w \in U_{a d}.
$$
Since $\mathcal{H}(\Omega)$ and $L^{2}(\Omega_\mathcal{T})$
are Hilbert spaces, then
\begin{align*}
&\mathcal{J}^{\prime}\left(\nu^{*}, u^{*}\right)(w)
\\=& \lim _{\varepsilon \rightarrow 0} \frac{1}{\varepsilon}
\left(\mathcal{J}\left(\nu^{\varepsilon}, u^{\varepsilon}\right)
- \mathcal{J}\left(\nu^{*}, u^{*}\right)\right) \\
=& \lim _{\varepsilon \rightarrow 0} \frac{1}{\varepsilon}
\left(\int_{0}^{\mathcal{T}} \int_{\Omega} \left(\left(\nu_{2}^{\varepsilon}\right)^{2}
-\left(\nu_{2}^{*}\right)^{2}\right) d x d t
+ \int_{\Omega} \left(\left(\nu_{2}^{\varepsilon}(x, \mathcal{T})\right)^{2}
-\left(\nu_{2}^{*}(x, \mathcal{T})\right)^{2}\right) d x\right. \\
&\qquad +\left.\eta \int_{0}^{\mathcal{T}} \int_{\Omega}\left(
\left(u^{\varepsilon}\right)^{2}-\left(u^{*}\right)^{2}\right) d x d t\right) \\
=& \lim _{\varepsilon \rightarrow 0}\left(\int_{0}^{\mathcal{T}}
\int_{\Omega}\left(\frac{\nu_{2}^{\varepsilon}-\nu_{2}^{*}}{\varepsilon}\right)
\left(\nu_{2}^{\varepsilon}+\nu_{2}^{*}\right) d x d t
+ \int_{\Omega}\left(\frac{\nu_{2}^{\varepsilon}-\nu_{2}^{*}}{\varepsilon}\right)
\left(\nu_{2}^{\varepsilon}+\nu_{2}^{*}\right)(x, \mathcal{T}) d x\right. \\
&\qquad +\left.\eta \int_{0}^{\mathcal{T}} \int_{\Omega}(\varepsilon
w^{2}+2 w u^{*}) d x d t \right).
\end{align*}
Because $\nu_{2}^{\varepsilon} \rightarrow \nu_{2}^{*}$ belongs to
$L^{2}(\Omega_\mathcal{T})$ and $\nu_{2}^{\varepsilon},
\nu_{2}^{*} \in L^{\infty}(\Omega_\mathcal{T})$, then
$$
\begin{aligned}
\mathcal{J}^{\prime}\left(\nu^{*}, u^{*}\right)(w)
&= 2 \int_{0}^{\mathcal{T}} \int_{\Omega}\left(\nu_{2}^{*}\right)
\nu^{\prime}\left(u^{*}\right) w \ d x d t\\
&\quad + 2 \int_{\Omega}\left(\left(\nu_{2}^{*}\right) \nu^{\prime}
\left(u^{*}\right) w\right)(x, \mathcal{T}) d x
+ 2 \eta \int_{0}^{\mathcal{T}} \int_{\Omega} w u^{*} d x d t.
\end{aligned}
$$
This is the same as
$$
\begin{aligned}
J^{\prime}\left(\nu^{*}, u^{*}\right)(w)
&= 2 \int_{0}^{\mathcal{T}}\left( \mathcal{M} \nu^{*},
\mathcal{M} y\right)_{\mathcal{H}(\Omega)} d t \\
&\quad  + 2\left( \mathcal{M} \nu^{*}(x, \mathcal{T}),
\mathcal{M} y(x, \mathcal{T})\right)_{\mathcal{H}(\Omega)}
+ 2 \eta \int_{0}^{\mathcal{T}}\left( u^{*}, w\right)_{L^{2}(\Omega)} d t,
\end{aligned}
$$
where $y = \nu^{\prime}\left(u^{*}\right) w$ represents the unique
solution of \eqref{E6.4} related to $w$. By using \eqref{E6.4}
and \eqref{E6.5}, we get
$$
\begin{aligned}
\int_{0}^{\mathcal{T}}\left( \mathcal{M} \nu^{*}, \mathcal{M} y \right)_{\mathcal{H}(\Omega)}
&d t +\left( \mathcal{M} \nu^{*}(x, \mathcal{T}),
\mathcal{M} y(x, \mathcal{T})\right)_{\mathcal{H}(\Omega)} \\
&=\int_{0}^{\mathcal{T}}\left( \mathcal{M}^{*} \mathcal{M} \nu^{*},
y\right)_{\mathcal{H}(\Omega)} d t + \left( \mathcal{M}^{*}
\mathcal{M} \nu^{*}(x, \mathcal{T}), y(x, \mathcal{T})\right)_{\mathcal{H}(\Omega)}\\
&= \int_{0}^{\mathcal{T}}\left({ }^{\mathcal{C}}_{\mathcal{T}}
\mathcal{D}^{\alpha}_{t} \rho - \lambda \operatorname{div}\left(\psi^{\prime}
(\nabla \nu^{*})\nabla \rho\right)
- \mathcal{N}\rho \ , \ y\right)_{\mathcal{H}(\Omega)} d t\\
&\quad +\left( \mathcal{M}^{*} \mathcal{M} \nu^{*}(x, \mathcal{T}),
y(x, \mathcal{T})\right)_{\mathcal{H}(\Omega)}\\
&= \int_{0}^{\mathcal{T}}\left( \rho \ , \ { }^{\mathcal{C}}
\mathcal{D}^{\alpha}_{t} y - \lambda \operatorname{div}\left(\psi^{\prime}(
\nabla \nu^{*})\nabla y\right) - \mathcal{N}y\right)_{\mathcal{H}(\Omega)} d t,
\end{aligned}
$$
that is,
$$
\begin{aligned}
\int_{0}^{\mathcal{T}}\left( \mathcal{M} \nu^{*}, \mathcal{M} y \right)_{\mathcal{H}(\Omega)}
&d t +\left( \mathcal{M} \nu^{*}(x, \mathcal{T}),
\mathcal{M} y(x, \mathcal{T})\right)_{\mathcal{H}(\Omega)} \\
&= \int_{0}^{\mathcal{T}}\left( \rho,
\mathcal{F}w)\right)_{\mathcal{H}(\Omega)} d t \\
&= \int_{0}^{\mathcal{T}}\left( \mathcal{F}^{*} \rho,
w\right)_{L^{2}(\Omega)} d t,
\end{aligned}
$$
and
$$
\mathcal{J}^{\prime}\left(\nu^{*}, u^{*}\right)(w)
= 2\int_{0}^{\mathcal{T}}\left(\mathcal{F}^{*} \rho
+ \eta u^{*}, w\right)_{L^{2}(\Omega)} d t.
$$
With the G\^{a}teaux differentiability of $\mathcal{J}$ at $u^*$
and the convexity of $U_{ad}$, we can affirm that
$$
0\leq \mathcal{J}^{\prime}\left(\nu^{*}, u^{*}\right)(\phi-u^*)
$$
holds for all $\phi\in U_{ad}$. Therefore, we can state that
\begin{align*}
u^{*}
&=\min \left\{1, \max \left(-\frac{1}{\eta} \mathcal{F}^{*} \rho, 0\right)\right\}\\
&= \min \left\{1, \max \left(0, -\frac{\nu_{1}^{*}}{\eta}
\left(\rho_{1}-\rho_{3}\right)\right)\right\}.
\end{align*}
The proof is complete.
\end{proof}

While our study provides a thorough analysis of the fractional spatiotemporal SIR model, 
including the derivation of necessary optimality conditions for the associated 
control problem, the derivation of sufficient conditions for optimality remains 
an open question. Sufficient conditions often require additional structural properties 
of the objective functional or state dynamics. For example, strict convexity 
of the objective functional
\[
\mathcal{J}(u) = \|I(\cdot, \mathcal{T})\|_{L^2(\Omega)}^2 
+ \|I\|_{L^2(\Omega_\mathcal{T})}^2 + \eta \|u\|_{L^2(\Omega_\mathcal{T})}^2,
\]
requires verifying that
\[
\frac{\partial^2 \mathcal{J}}{\partial u^2} > 0 
\quad \text{for all admissible } u \in U_{\text{ad}}.
\]

Similarly, monotonicity properties of the state operator \(\mathcal{A}[S, I, R, u]\) defined by
\[
\mathcal{A}[S, I, R, u] =
\begin{bmatrix}
{ }^{\mathcal{C}} \mathcal{D}^\alpha_t S - \lambda_1 \Delta_p S - \beta N + \mu S I + \xi S + uS \\
{ }^{\mathcal{C}} \mathcal{D}^\alpha_t I - \lambda_2 \Delta_p I - \mu S I + (\xi + \kappa) I \\
{ }^{\mathcal{C}} \mathcal{D}^\alpha_t R - \lambda_3 \Delta_p R - \kappa I + \xi R - uS
\end{bmatrix},
\]
governing the dynamics, such as
\[
\langle \mathcal{A}[S_1, I_1, R_1, u] - \mathcal{A}[S_2, I_2, R_2, u], 
S_1 - S_2 \rangle \geq 0,
\]
might provide additional insights into sufficient conditions for optimality.

However, these properties are not immediately evident in our current framework 
involving fractional derivatives and the \(p\)-Laplacian operator. Addressing 
this limitation would include exploring the functional properties of the system 
in greater depth and potentially imposing additional constraints or regularity 
assumptions. Future research could focus on establishing such conditions, providing 
stronger guarantees for the optimality of the control program.


\section{Numerical results}
\label{S7}

The primary objective of this section is to present approximate
and numerical results to our fractional model. Therefore, whether
in the absence of a vaccination program or in its presence, we study
the impact of the $\alpha$-order derivation and the coefficient
of diffusion $p$ on the spatial spread of infection over 80 days.

In the following, we presume that the illness begins in the sub-domain
$\Omega_0 = cell(11, 11)$ when the disease is born in the middle
of $\Omega$, where $\Omega$ symbolizes an urban area inhabited
by the individuals under examination, covering a
$21km \times 21km$ square grid of spatial extent.
Except for the sub-domain $\Omega_0$ of $\Omega$, we assume that all
susceptible individuals are homogeneously and uniformly distributed
at $t = 1$ with 50 in each $1km \times 1km$ cell, where we introduce
10 infected individuals in $\Omega_0$ while maintaining 40 susceptible
individuals in the same area.

Due to the unavailability of real-world data in this study, we have selected 
parameter values based on previous works 
\cite{Alshomrani2021, Ding2022, SidiAmmi2022, SidiAmmi2023, Zinihi2024MM}, 
which qualitatively represent the dynamics of disease spread. These values are 
chosen to capture the general progression of an epidemic, while respecting 
realistic assumptions about the relationships between the parameters, as observed 
in real-world scenarios. For example, certain rates, such as transmission or recovery 
rates, are expected to be larger or smaller depending on biological or 
intervention-related processes. The parameters in the SIR model reflect these 
various processes, and their interrelationships offer valuable insights into 
the dynamics of disease spread. A summary of the parameter values and initial 
conditions used in our simulations can be found in Table~\ref{Tab2}.

\begin{table}[H]
\centering
\caption{Initial conditions and parameters values.}\label{Tab2}
{\footnotesize \begin{tabular}{|c|c|c|c|}
\hline Symbol & Description & Unit & Value \\
\hline\hline $S_{0}(x,y)$ & Initial susceptible individuals
& $people\cdot km^{-2}$ & $50$ for $(x,y)\notin\Omega_0$\\
& & & $40$ for $(x,y)\in\Omega_0$ \\
\hline $I_{0}(x,y)$ & Initial infected individuals
& $people\cdot km^{-2}$ & $0$ for $(x,y)\notin\Omega_0$\\
& & & $10$ for $(x,y)\in\Omega_0$ \\
\hline $R_{0}(x,y)$ & Initial recovered individuals
& $people\cdot km^{-2}$ & $0$ for $(x,y)\in\Omega$ \\
\hline $\lambda_{1}=\lambda_{2}=\lambda_{3}$
& Diffusion coefficient & $km^{2}\cdot day^{-1}$ & $0.1$ \\
\hline $\beta$ & Birth rate & $day^{-1}$ & $0.02$ \\
\hline $\xi$ & Natural death rate & $day^{-1}$ & $0.03$ \\
\hline $\mu$ & Transmission rate & $(people\cdot day)^{-1}km^{2}$ & $0.9$ \\
\hline $\kappa$ & Recovery rate & $day^{-1}$ & $0.04$ \\
\hline $\mathcal{T}$ & Final time & $day$ & $80$ \\ \hline
\end{tabular}}
\end{table}


\subsection{Forward-backward sweep method algorithm}
\label{S7.1}

The approach we apply is founded on an iterative discrete scheme
similar to the forward-backward sweep method. To resolve our fractional
system \eqref{E3.1}--\eqref{E3.5}, we have used MatLab.
Systems \eqref{E3.1} and \eqref{E6.5} are numerically integrated by
using an explicit finite difference method to approximate the left-right
$\mathcal{C}$-fractional derivatives. By applying a forward approach in time,
we solve the state problem, and a backward approach in time solves the dual
system according to the transversality conditions. The preceding discussion
can be succinctly summarized in the algorithm organigram depicted in Figure~\ref{F2}.


\subsection{Numerical findings without a vaccination program}
\label{S7.2}

Our numerical results, obtained by varying the values of $p$ and $\alpha$,
without implementing a vaccination strategy, are depicted in Figures~\ref{F3},
\ref{F4}, and \ref{F5}. These figures illustrate the progression of the epidemic
from the first day to the last, revealing a consistent pattern of susceptible
citizens spreading from the center of $\Omega$ to its periphery, with infected
citizens multiplying rapidly. In Figure~\ref{F3}, for the case of $\alpha = 1$
and $p \in \{15, 10, 5\}$, we observe that on the final day, there are approximately
50 infected individuals per cell. This implies that the epidemic takes 80 days
to fully cover all $\Omega$. Contrary, as depicted in Figures~\ref{F4} and \ref{F5},
it takes more than 80 days for the pandemic to spread and encompass the entire
region when $\alpha \in \{0.95, 0.9\}$ and $p \in \{15, 10, 5\}$. In these scenarios,
there are approximately 43 infected individuals per cell.


\subsection{Numerical findings with a vaccination program}
\label{S7.3}

Now, the vaccination program is initiated from the first day of the simulation.
In this scenario, we assess the evolution of infection prevalence over an
80-days period, during which susceptible individuals move to the category of recovered.
Maintaining the same parameter configurations as presented in Section~\ref{S7.2},
Figure~\ref{F6} portrays the distribution of 30 infected citizens per grid cell
and 17 recovered individuals per cell in the center of $\Omega$. In contrast,
Figures~\ref{F7} and \ref{F8} illustrate an environment where the central region of
$\Omega$ contains approximately 5 susceptible persons, 33 infected individuals,
and 12 recovered per cell. For a comprehensive visualization of the epidemic
dynamics, under various parameter settings, $p \in \{15, 10, 5\}$
and $\alpha \in \{1, 0.95, 0.9\}$, Figure~\ref{F9} provides videos,
showcasing the spread of the epidemic over a 80-days period during
the implementation of the vaccination strategy.

\begin{sidewaysfigure}
\medskip
\centering
\scalebox{0.89}{ \begin{tikzpicture}[node distance=2cm]
\node (St) [rectangle, rounded corners, minimum width=1.7cm,
minimum height=1cm,text centered, draw=black, fill=red!30] {Start};
\node (Init) [trapezium, trapezium stretches=true, trapezium left angle=70,
trapezium right angle=110, minimum width=2.5cm, minimum height=1cm,
text centered, draw=black, fill=blue!30, right of=St, xshift=1.5cm] {Initialization};
\node (ErTe) [diamond, minimum width=1cm, minimum height=0.5cm, text centered,
draw=black, fill=green!30, right of=Init, xshift=1.5cm]
{$Err_{test} { }^{{ }^{\color{red} a}}<0$};
\node (A) [rectangle, minimum width=2.3cm, minimum height=1cm, text centered,
text width=3cm, draw=black, fill=orange!30, right of=ErTe, xshift=2.5cm, yshift=-1cm]
{Solve the state system SIR};
\node (B) [rectangle, minimum width=2.5cm, minimum height=1cm, text centered,
text width=3cm, draw=black, fill=orange!30, right of=A, xshift=2.5cm]
{Solve the adjoint problem};
\node (C) [rectangle, minimum width=2.5cm, minimum height=1cm, text centered,
text width=3cm, draw=black, fill=orange!30, above of=B, yshift=0.7cm]
{Update the optimal control$^{\color{red} b}$};
\node (D) [rectangle, minimum width=2.5cm, minimum height=1cm, text centered,
text width=3cm, draw=black, fill=orange!30, left of=C, xshift=-2.5cm]
{Update the condition loop$^{\color{red} c}$};
\node (FinSol) [trapezium, trapezium stretches=true, trapezium left angle=70,
trapezium right angle=110, minimum width=2cm, minimum height=1cm,
text centered, draw=black, fill=blue!30, below of=Init] {Final Solution};
\node (End) [rectangle, rounded corners, minimum width=1.5cm,
minimum height=1cm,text centered, draw=black,
fill=red!30, left of=FinSol, xshift=-1.5cm] {End};
\draw [thick,->,>=stealth] (St) -- ++ (Init);
\draw [thick,->,>=stealth] (Init) -- ++ (ErTe);
\draw [thick,->,>=stealth] (ErTe.east) |- (A) node[near end,left,xshift=0.2cm,yshift=-0.3cm]{Yes};
\draw [thick,->,>=stealth] (A) -- ++ (B);
\draw [thick,->,>=stealth] (B) -- ++ (C);
\draw [thick,->,>=stealth] (C) -- ++ (D);
\draw [thick,->,>=stealth] (D) -| (ErTe);
\draw [thick,->,>=stealth] (ErTe) |- (FinSol) node[midway,above,xshift=-0.5cm,yshift=0.05cm]{No};
\draw [thick,->,>=stealth] (FinSol) -- ++ (End);
\end{tikzpicture}}
\caption{Algorithm organigram for the proposed SIR epidemic model.}\label{F2}
\vspace{0.5cm}

\begin{subfigure}[t]{0.32\textwidth}
\includegraphics[scale=0.27, height=2.8cm]{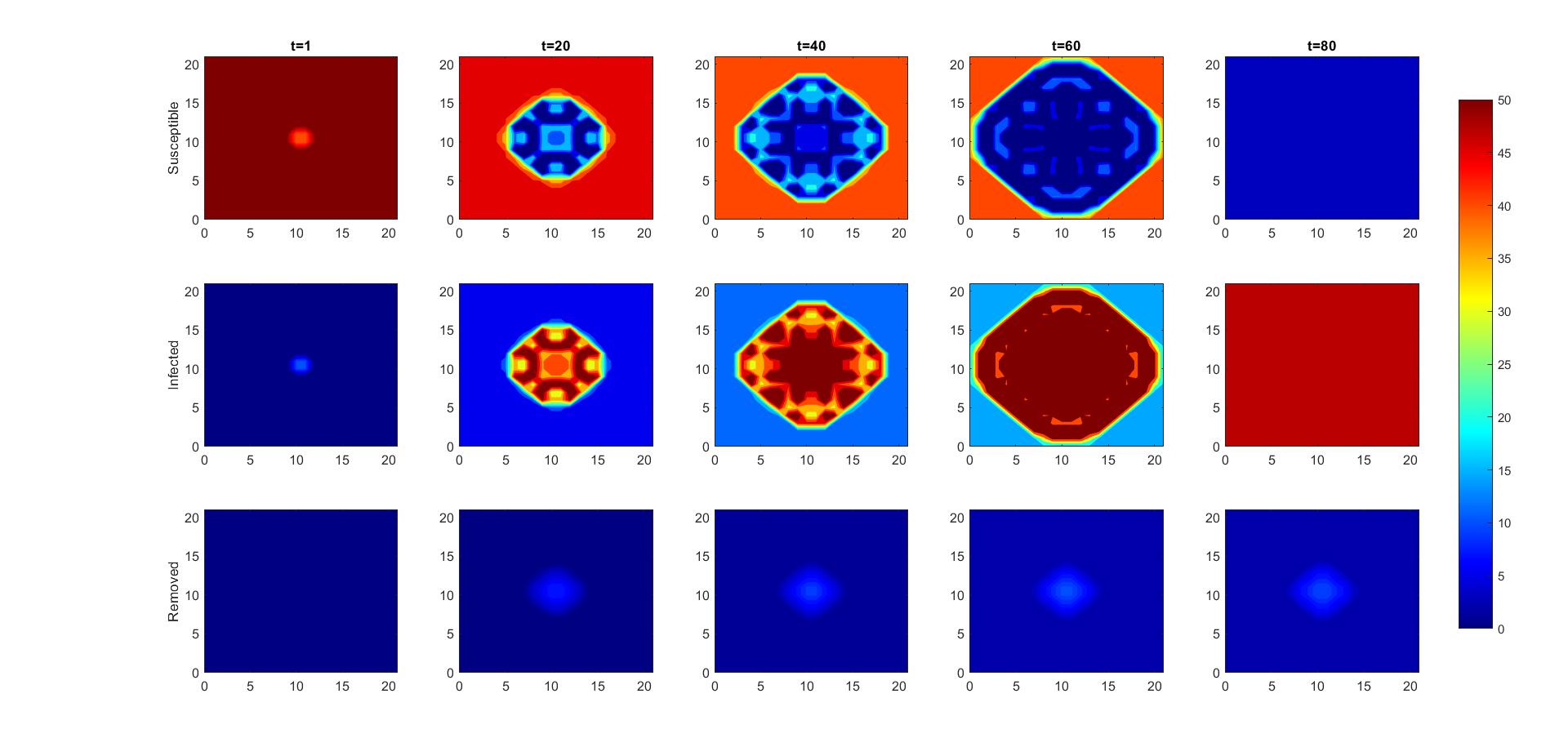}
\caption{For $p=15$}\label{F3A}
\end{subfigure}
\hfill
\begin{subfigure}[t]{0.32\textwidth}
\includegraphics[scale=0.27, height=2.8cm]{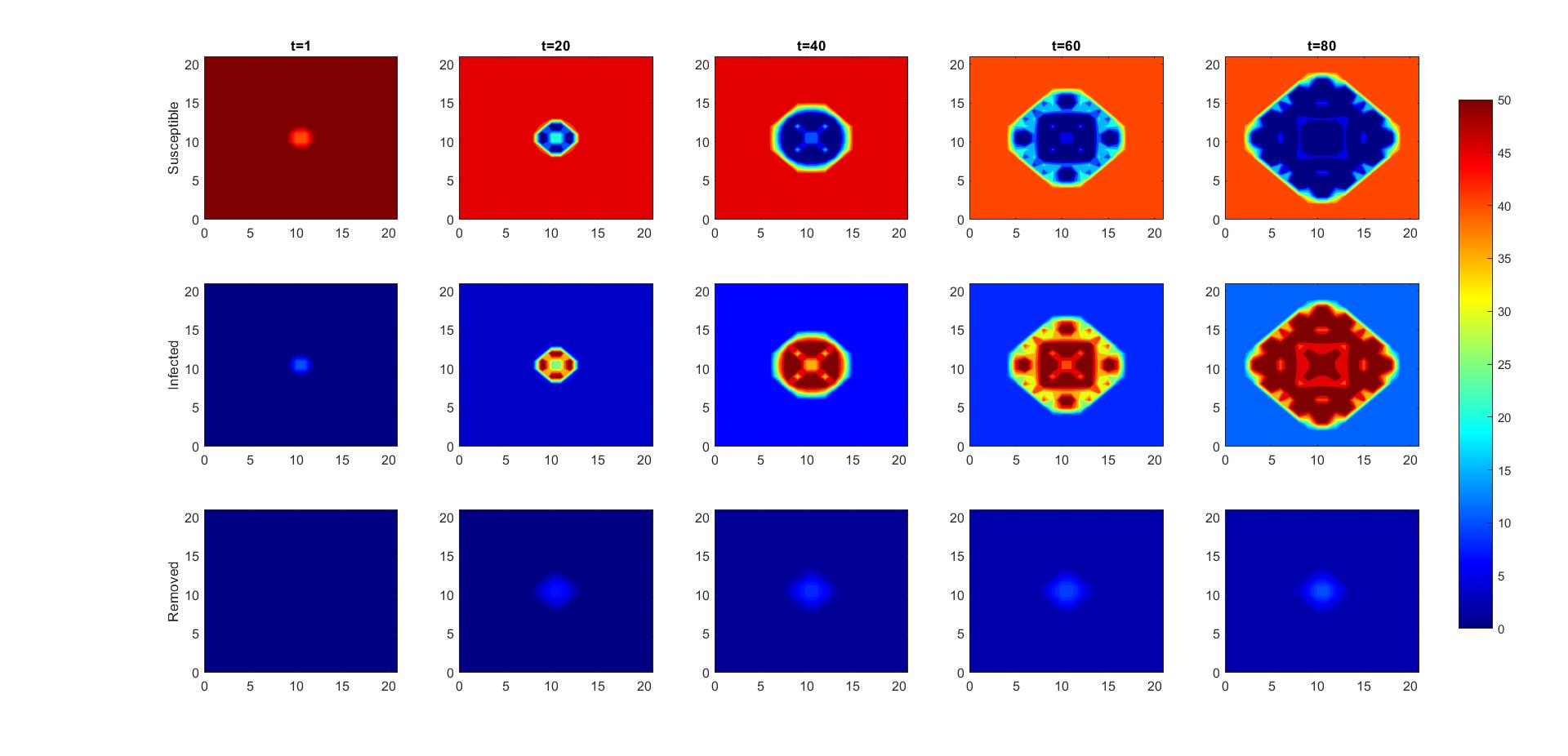}
\caption{For $p=10$}\label{F3B}
\end{subfigure}
\hfill
\begin{subfigure}[t]{0.32\textwidth}
\includegraphics[scale=0.27, height=2.8cm]{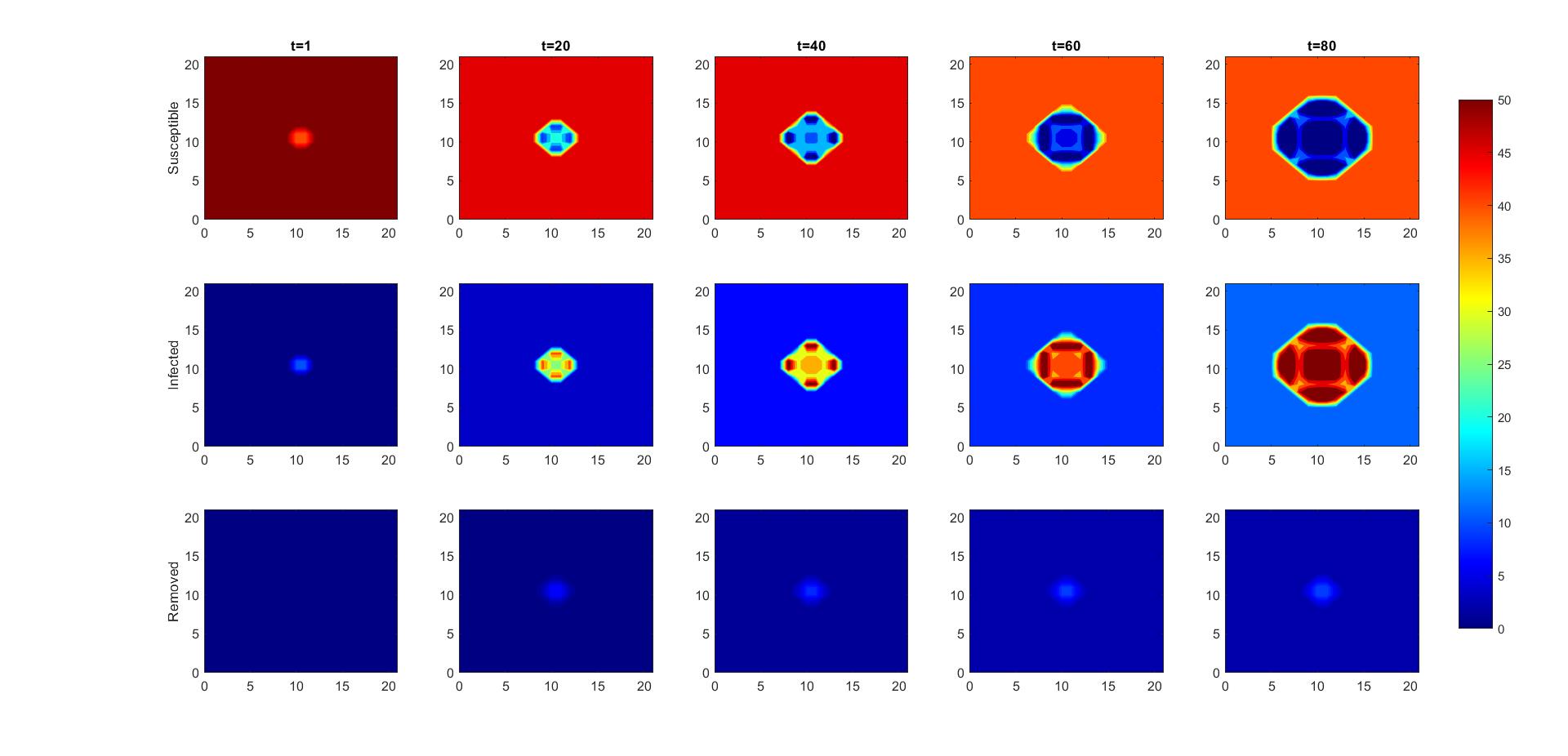}
\caption{For $p=5$}\label{F3C}
\end{subfigure}
\caption{Numerical findings of \eqref{E3.1}--\eqref{E3.3}
without vaccination program for $\alpha=1$.}\label{F3}
\begin{flushleft}
{\small
\noindent\rule{4cm}{0.4pt}\\
\quad Let $S = \{x_i = 1 + i\delta_x / i=0,\cdots N_x - 1\}$ be a uniform subdivision 
on $[1, \mathcal{T}]$ where $N_x$ and $\delta_x$ are the numbers and the step size of 
this subdivision, respectively. The number of steps in time is $N_t$, with the step 
time denoted by $\delta_t$; $\delta$ is the tolerance, 
and its value is $10^{-3}$ and $Err_{test} = -1$.\\
\quad ${}^{a}$Condition of the ``\textit{while}" loop.\\
\quad ${}^{b}$Using equation \eqref{E6.6}, update the optimal control $u$.\\
\quad ${}^{c}$$Err_{test} = \min\left\{\psi_{1}, \psi_{2}, \psi_{2}, \psi_{4}, 
\psi_{5}, \psi_{6}, \psi_{7} \right\}$, where $\psi_{1} = \delta\|S\| 
- \|S-S_{old}\|$, $\psi_{2} = \delta\|I\| - \|I-I_{old}$, 
$\psi_{3} = \delta\|R\| - \|R-R_{old}\|$, 
$\psi_{4} = \delta\|\rho_{1}\| - \|\rho_{1}-\rho_{old1}\|$, 
$\psi_{5} = \delta\|\rho_{2}\| - \|\rho_{2}-\rho_{old2}\|$, 
$\psi_{6} = \delta\|\rho_{3}\| - \|\rho_{3}-\rho_{old3}\|$, 
and $\psi_{7} = \delta\|u\| - \|u-u_{old}\|$.
}
\end{flushleft}
\end{sidewaysfigure}
\begin{sidewaysfigure}
\bigskip
\centering
\begin{subfigure}[t]{0.32\textwidth}
\includegraphics[scale=0.3, height=2.7cm]{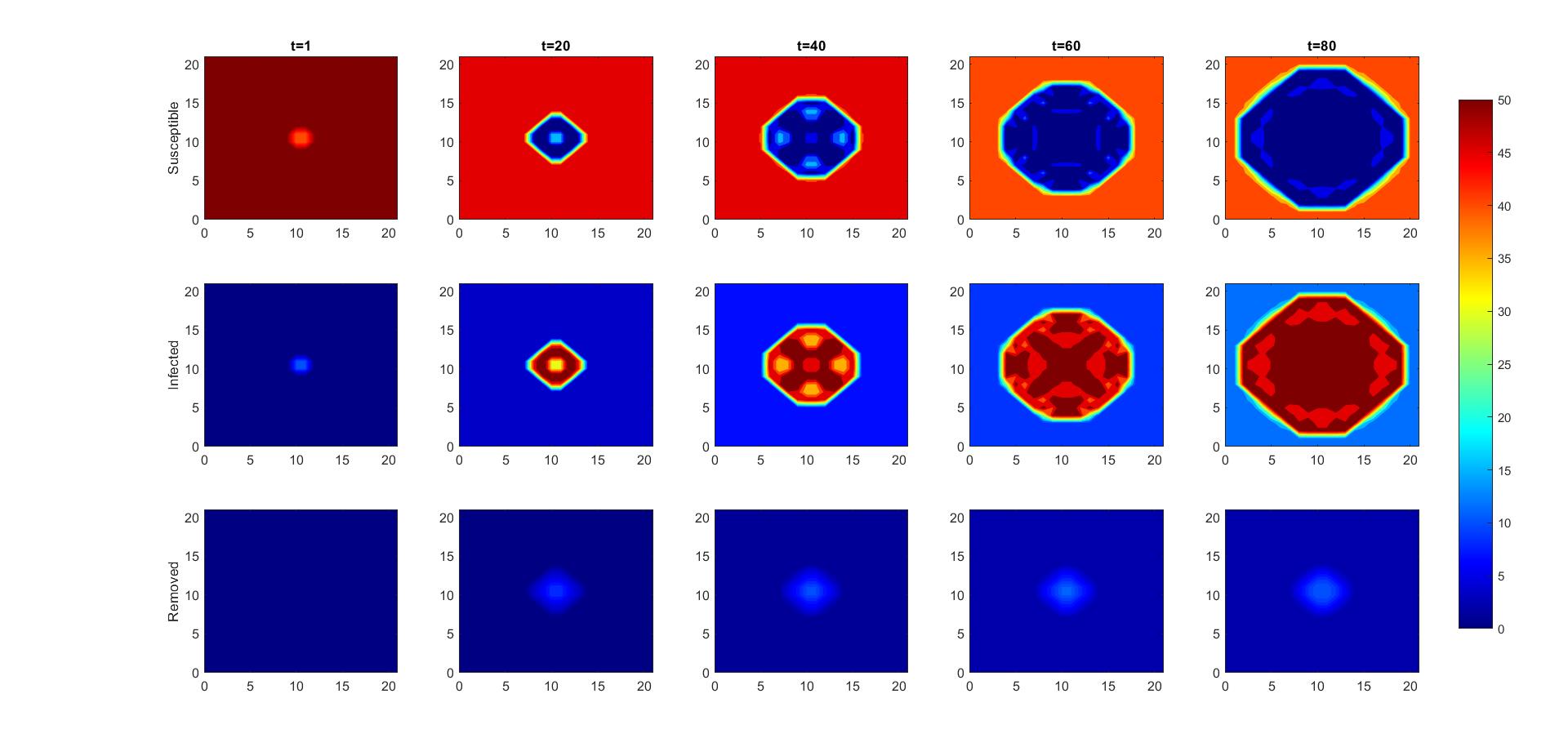}
\caption{For $p=15$}\label{F4A}
\end{subfigure}
\hfill
\begin{subfigure}[t]{0.32\textwidth}
\includegraphics[scale=0.3, height=2.7cm]{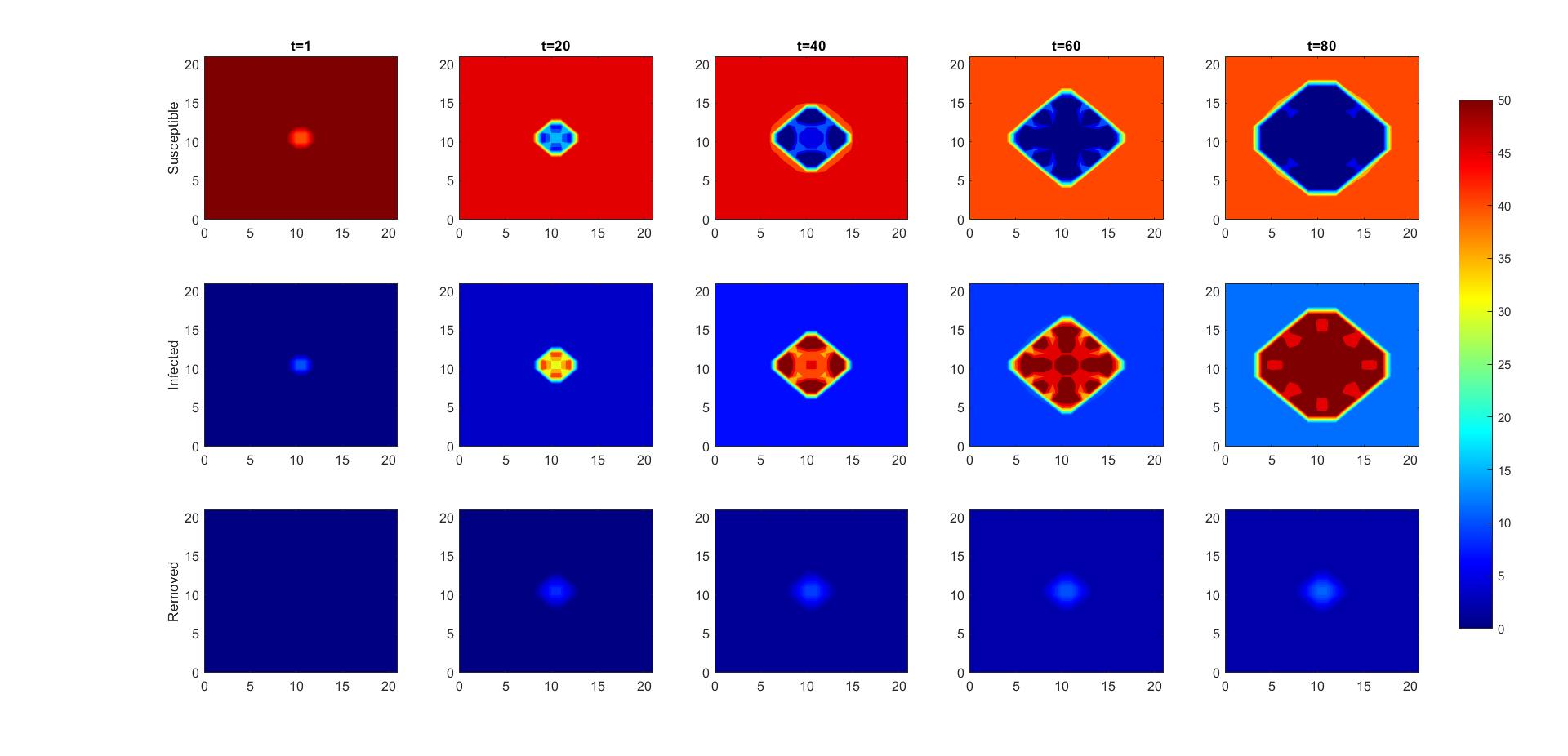}
\caption{For $p=10$}\label{F4B}
\end{subfigure}
\hfill
\begin{subfigure}[t]{0.32\textwidth}
\includegraphics[scale=0.3, height=2.7cm]{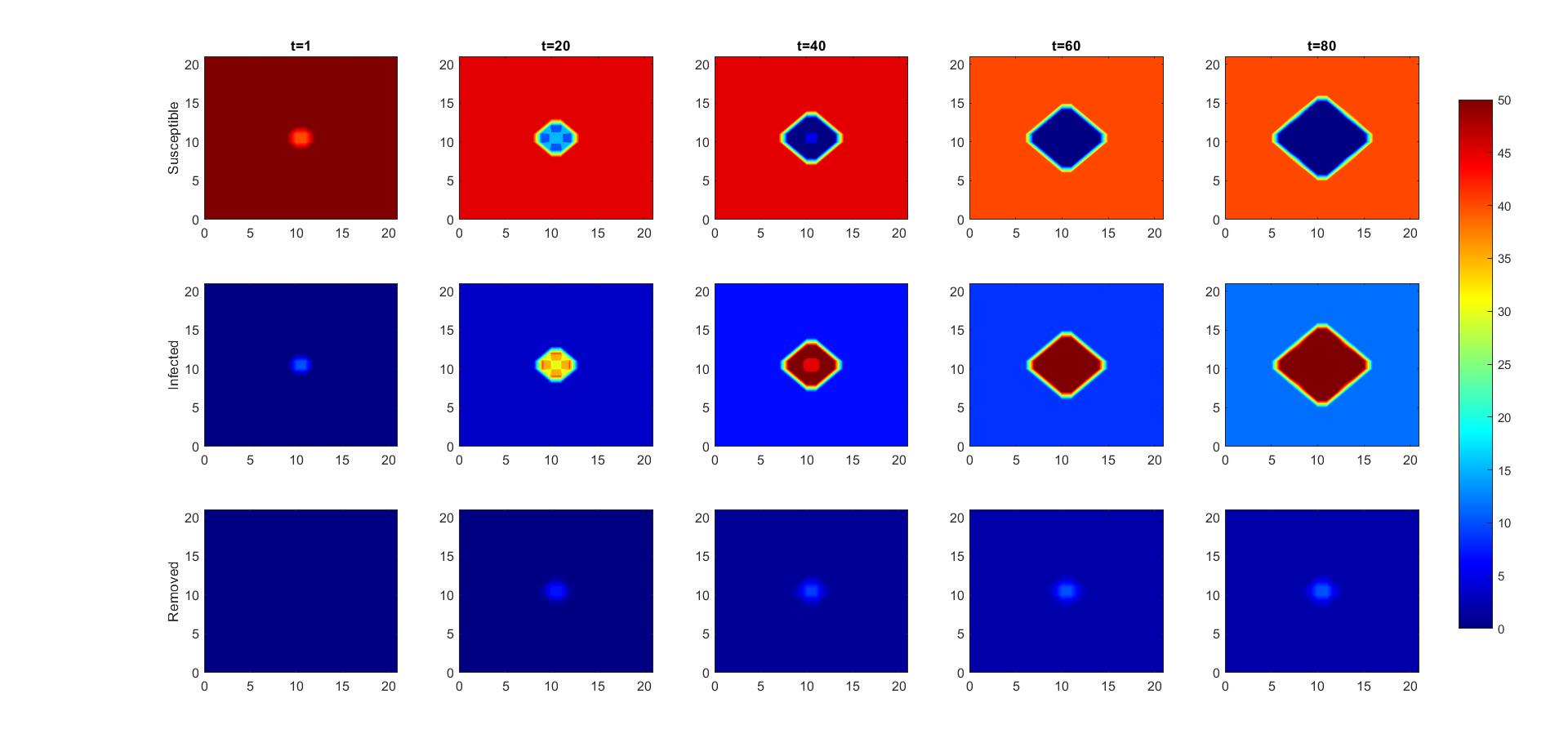}
\caption{For $p=5$}\label{F4C}
\end{subfigure}
\caption{Numerical findings of \eqref{E3.1}--\eqref{E3.3}
without vaccination program for $\alpha=0.95$.}\label{F4}
\vspace{0.2cm}
\begin{subfigure}[t]{0.32\textwidth}
\includegraphics[scale=0.3, height=2.7cm]{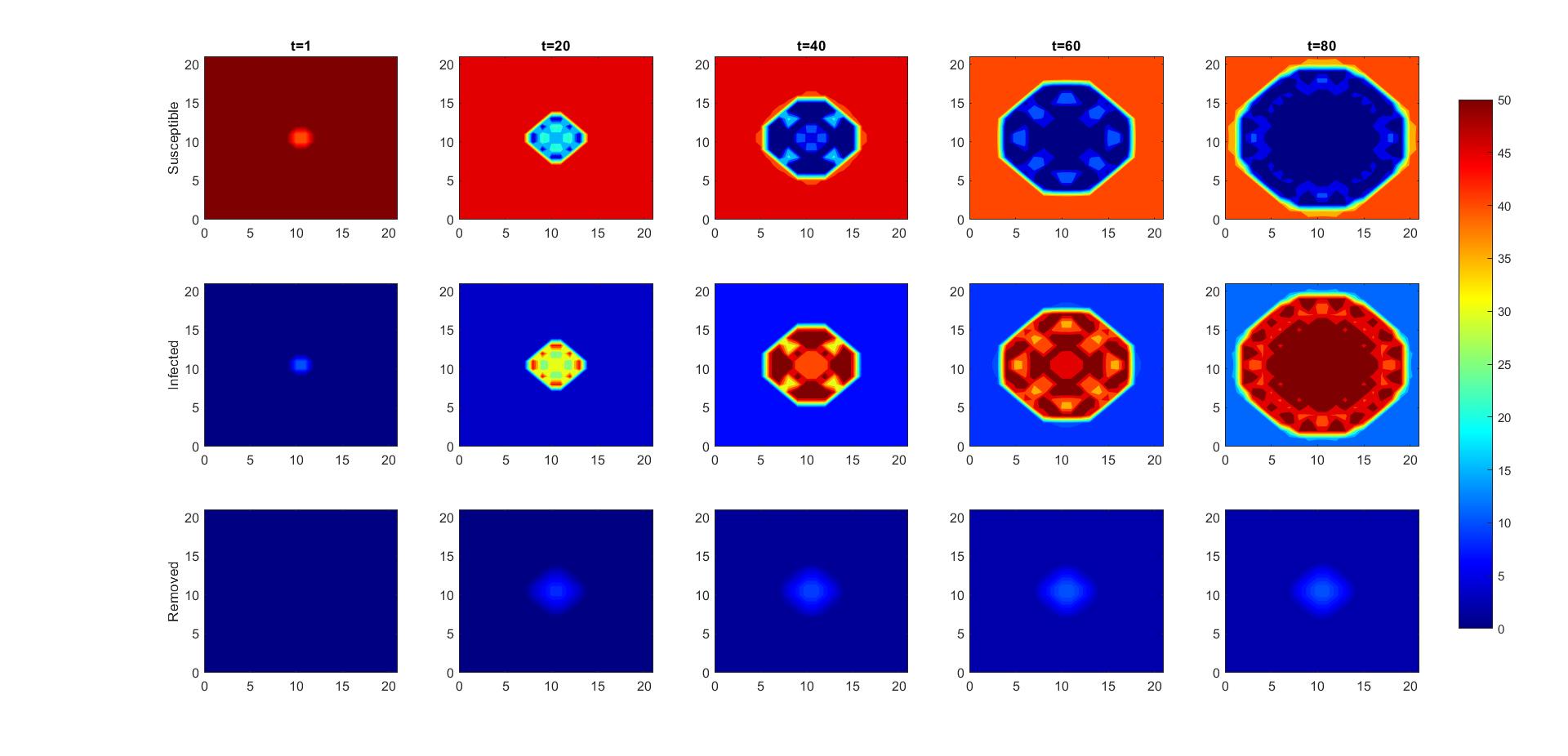}
\caption{For $p=15$}\label{F5A}
\end{subfigure}
\hfill
\begin{subfigure}[t]{0.32\textwidth}
\includegraphics[scale=0.3, height=2.7cm]{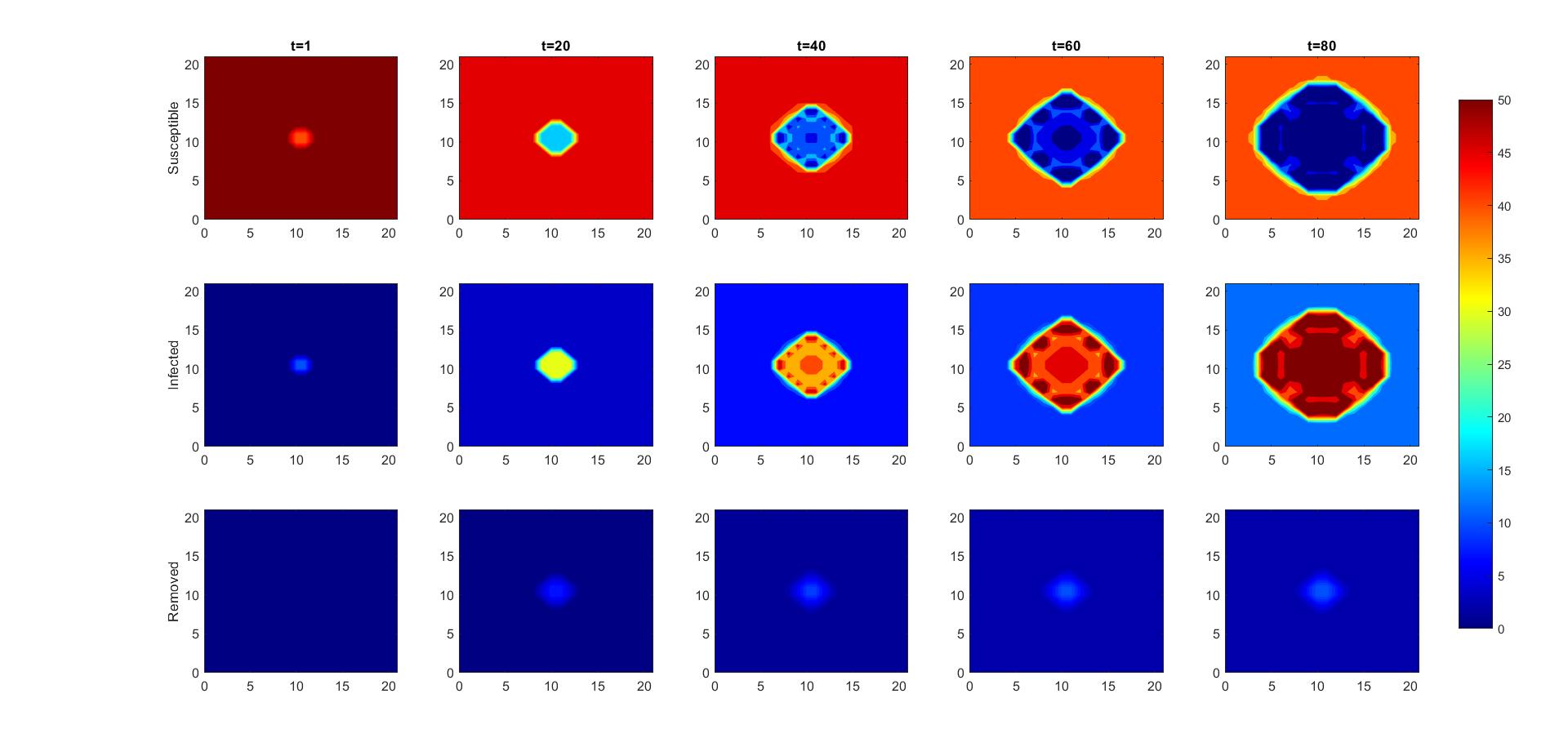}
\caption{For $p=10$}\label{F5B}
\end{subfigure}
\hfill
\begin{subfigure}[t]{0.32\textwidth}
\includegraphics[scale=0.3, height=2.7cm]{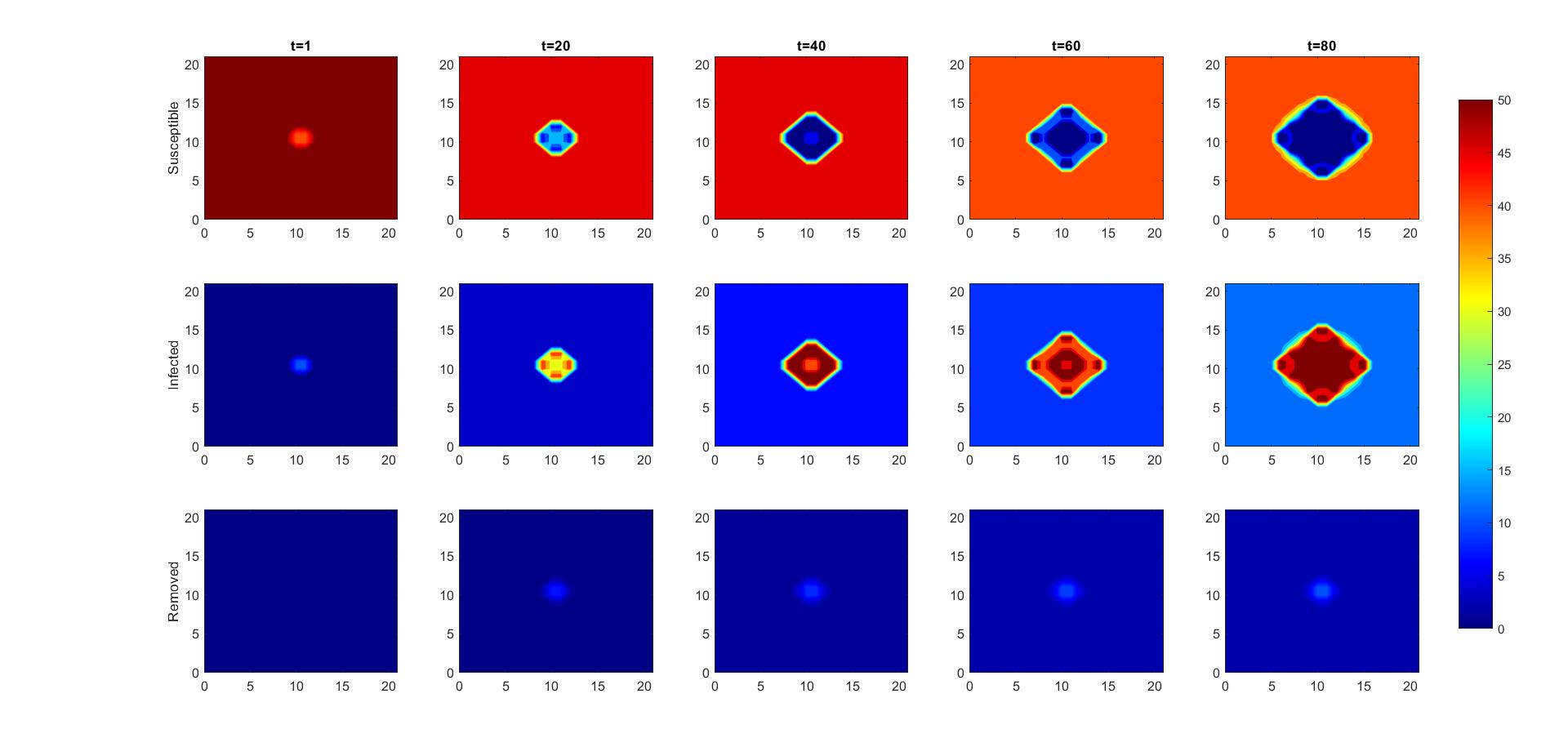}
\caption{For $p=5$}\label{F5C}
\end{subfigure}
\caption{Numerical findings of \eqref{E3.1}--\eqref{E3.3}
without vaccination program for $\alpha=0.9$.}\label{F5}

\vspace{0.2cm}
\begin{subfigure}[t]{0.32\textwidth}
\includegraphics[scale=0.3, height=2.7cm]{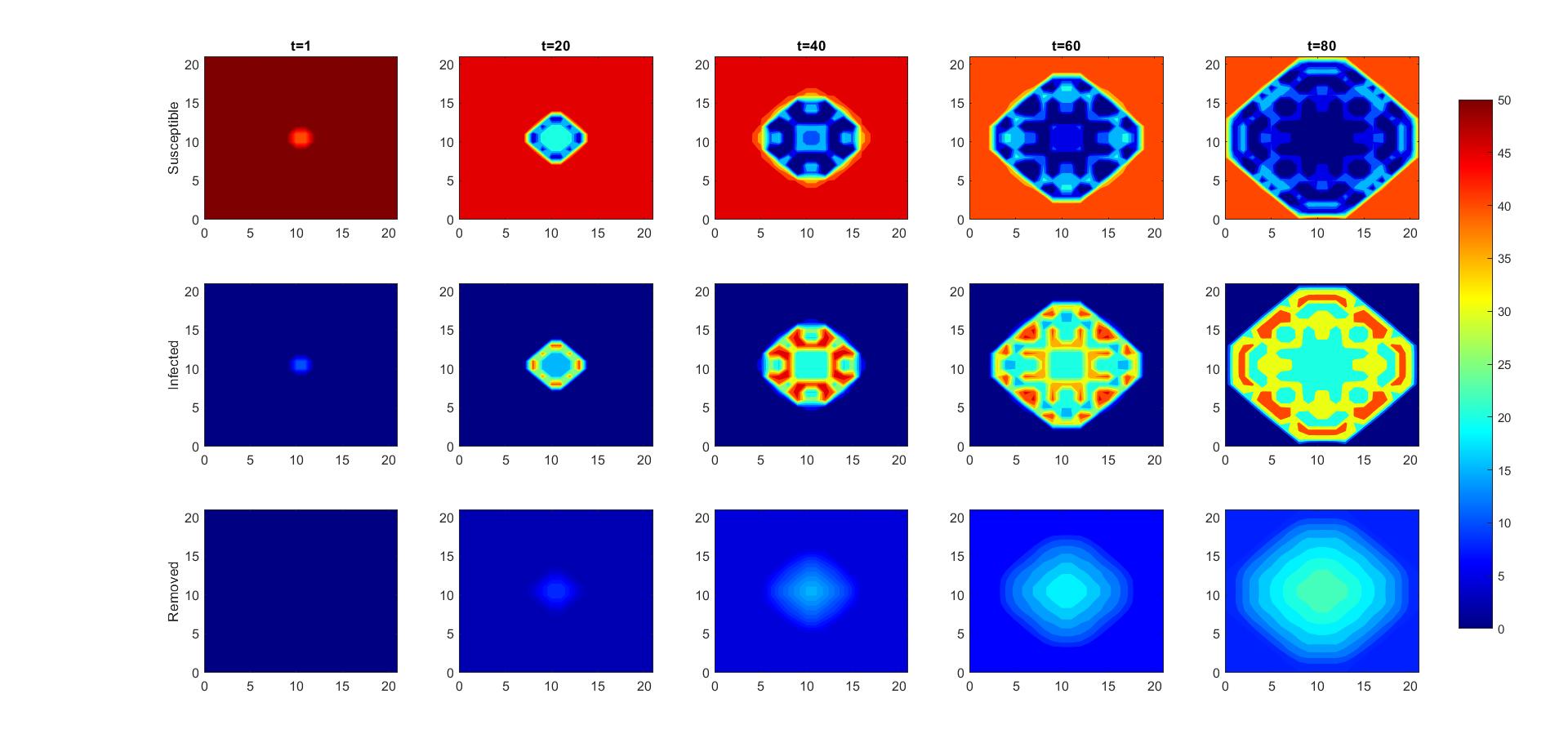}
\caption{For $p=15$}\label{F6A}
\end{subfigure}
\hfill
\begin{subfigure}[t]{0.32\textwidth}
\includegraphics[scale=0.3, height=2.7cm]{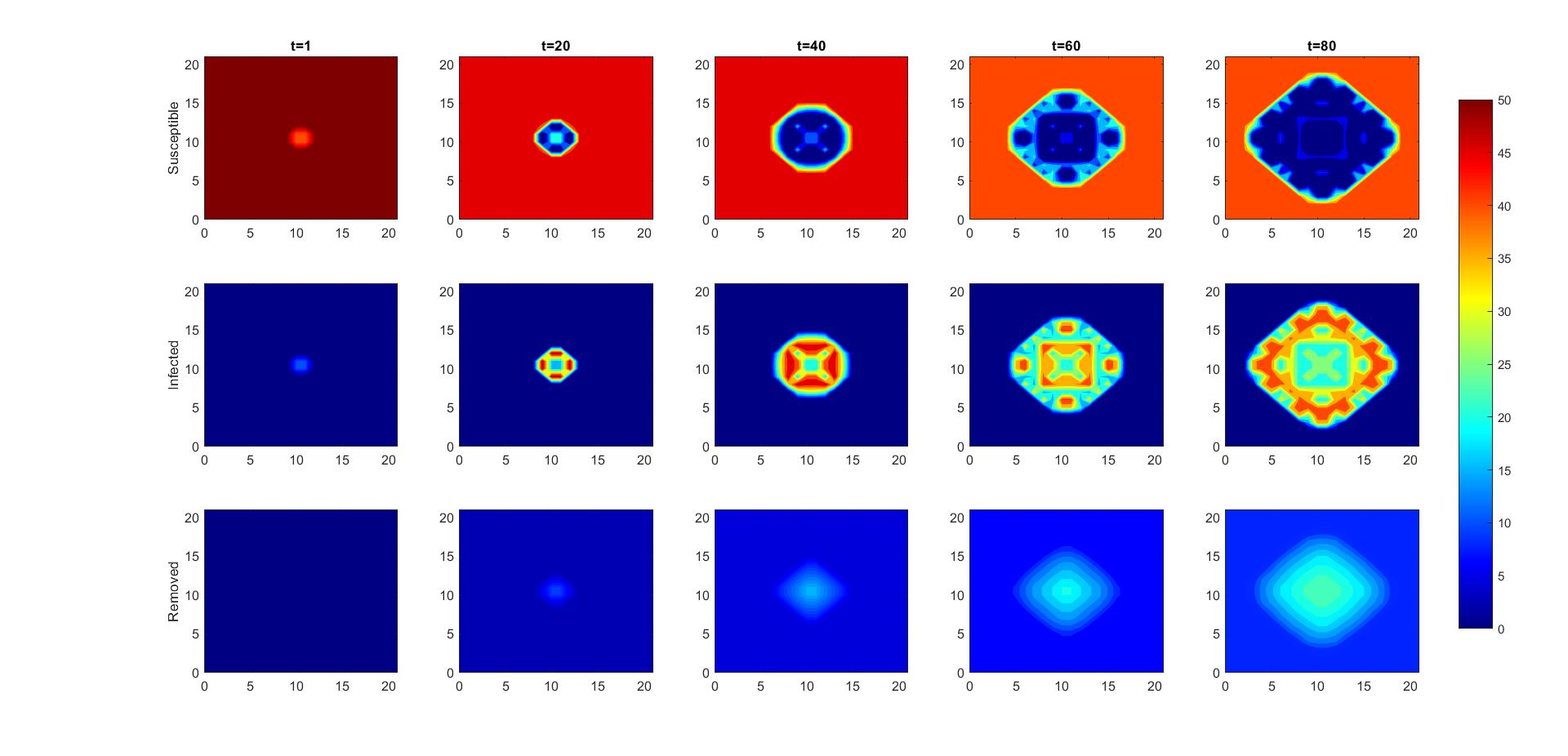}
\caption{For $p=10$}\label{F6B}
\end{subfigure}
\hfill
\begin{subfigure}[t]{0.32\textwidth}
\includegraphics[scale=0.3, height=2.7cm]{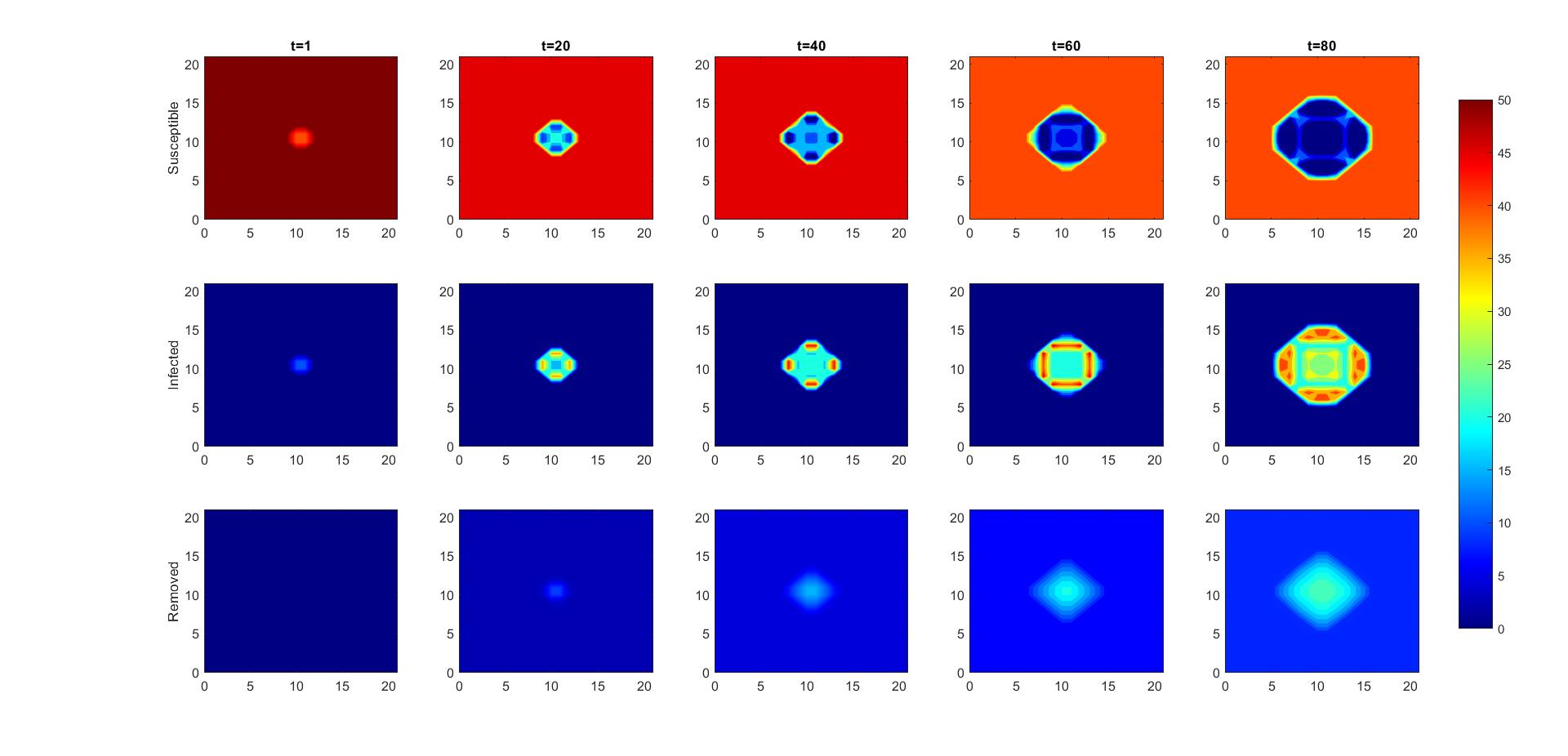}
\caption{For $p=5$}\label{F6C}
\end{subfigure}
\caption{Numerical findings of \eqref{E3.1}--\eqref{E3.3}
with vaccination program for $\alpha=1$.}\label{F6}
\end{sidewaysfigure}
\begin{sidewaysfigure}
\bigskip
\centering
\begin{subfigure}[t]{0.32\textwidth}
\includegraphics[scale=0.3, height=2.7cm]{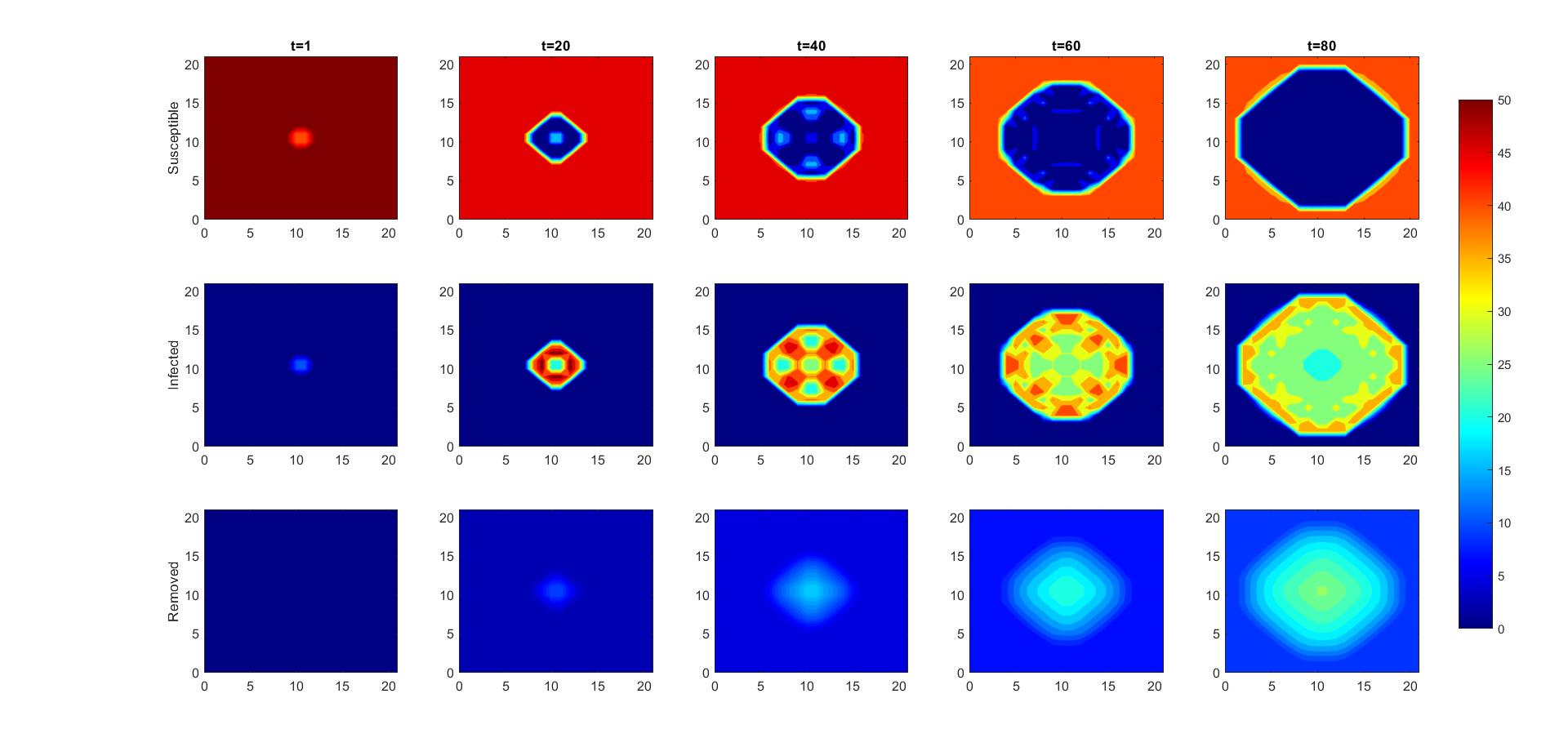}
\caption{For $p=15$}\label{F7A}
\end{subfigure}
\hfill
\begin{subfigure}[t]{0.32\textwidth}
\includegraphics[scale=0.3, height=2.7cm]{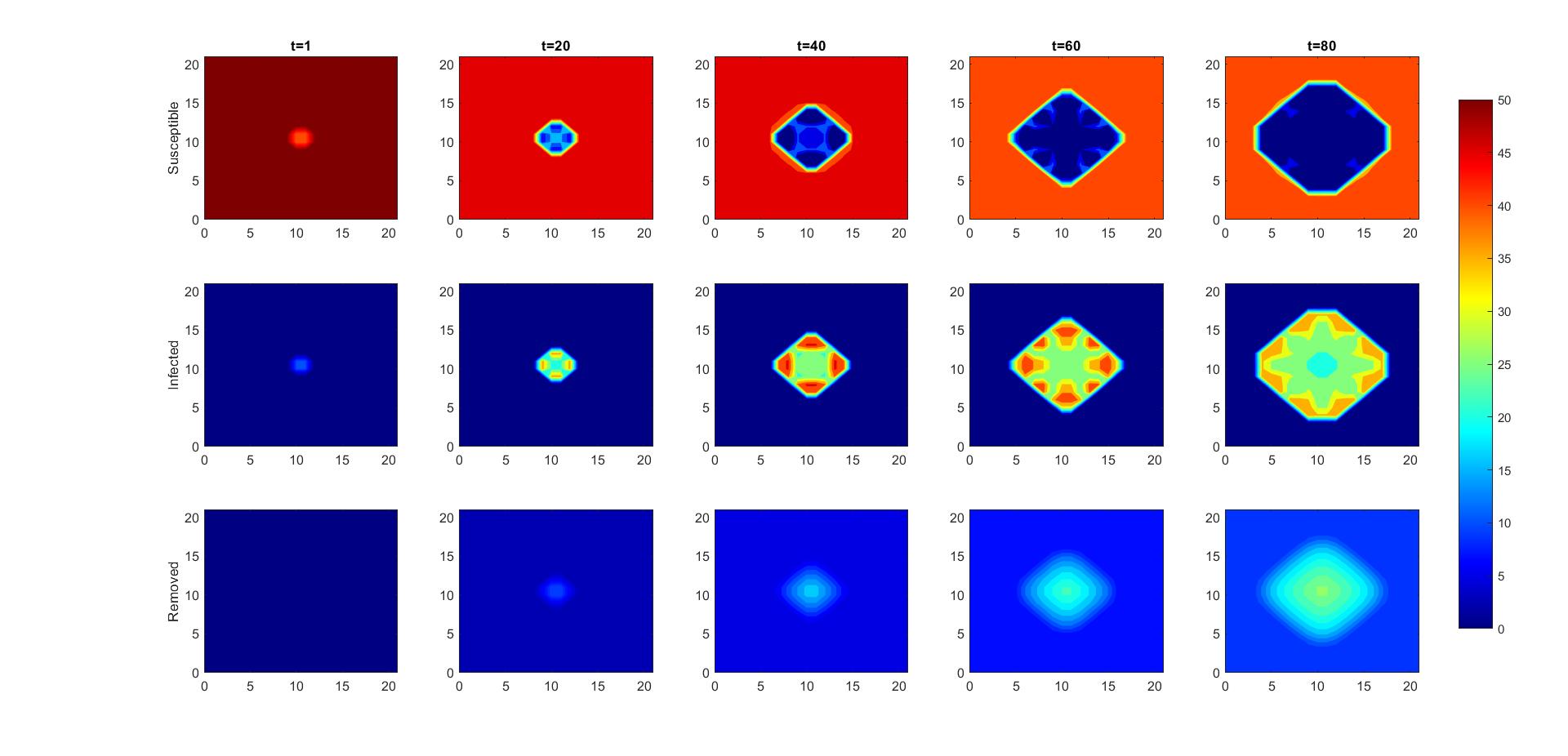}
\caption{For $p=10$}\label{F7B}
\end{subfigure}
\hfill
\begin{subfigure}[t]{0.32\textwidth}
\includegraphics[scale=0.3, height=2.7cm]{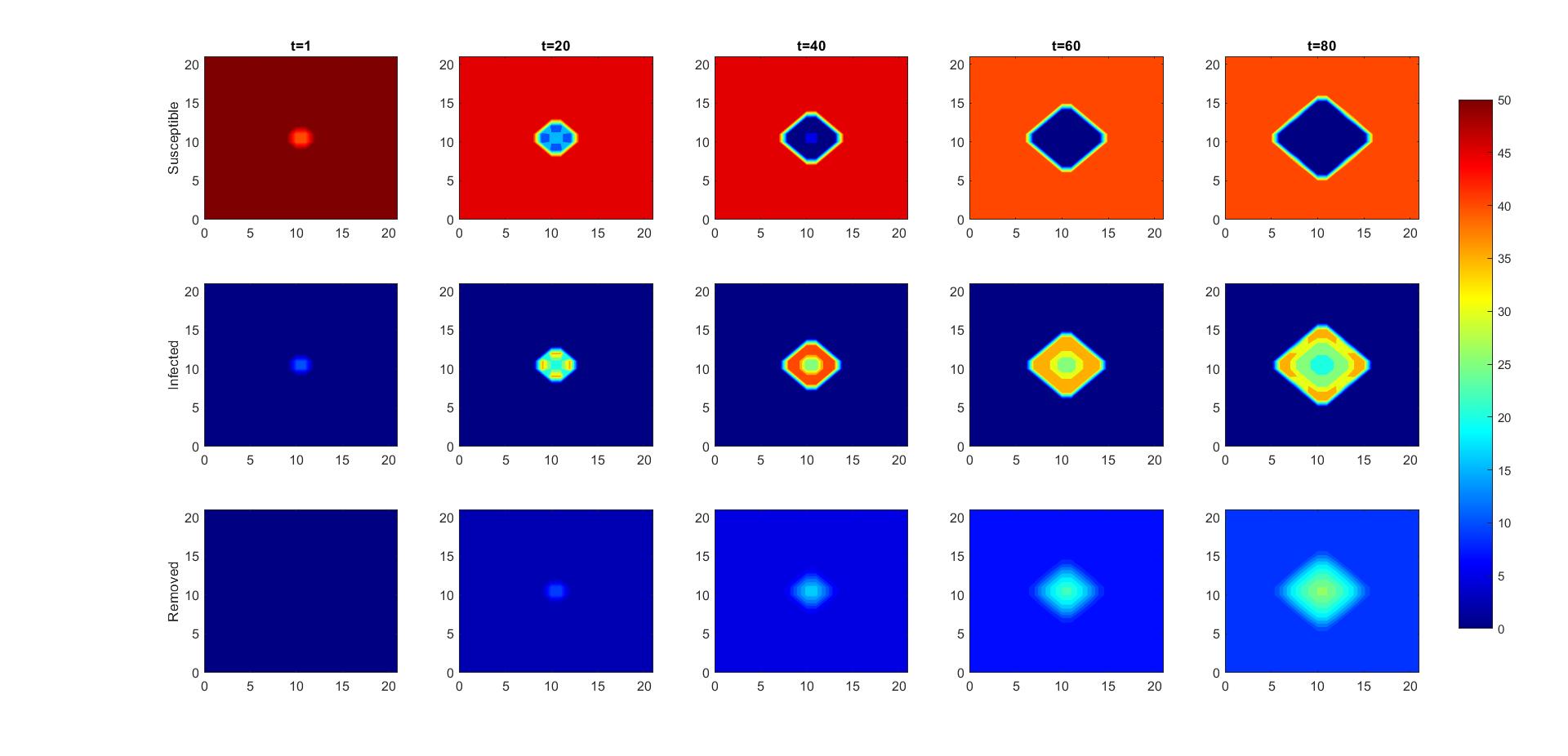}
\caption{For $p=5$}\label{F7C}
\end{subfigure}
\caption{Numerical findings of \eqref{E3.1}--\eqref{E3.3}
with vaccination program for $\alpha=0.95$.}\label{F7}
\vspace{0.2cm}
\begin{subfigure}[t]{0.32\textwidth}
\includegraphics[scale=0.3,height=2.7cm]{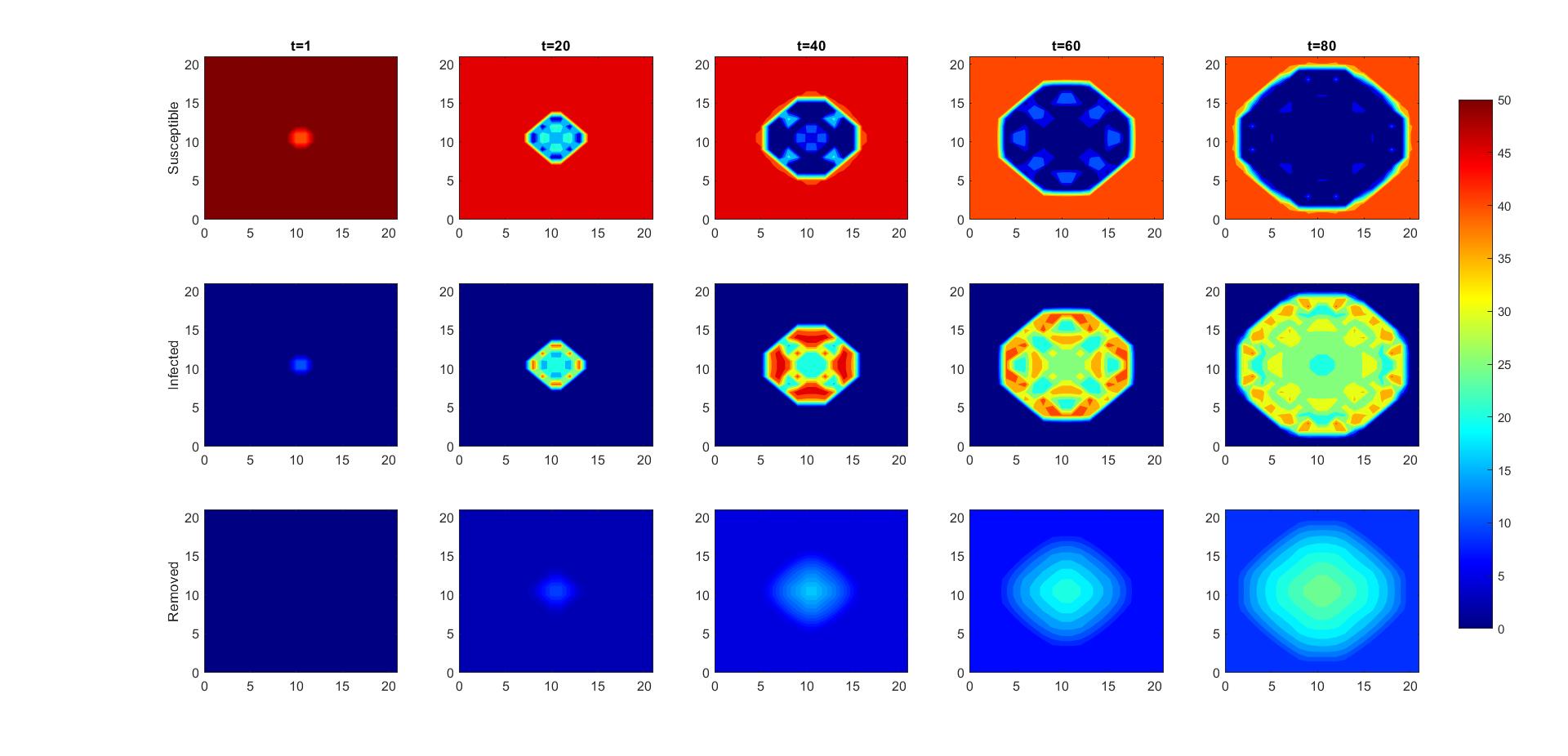}
\caption{For $p=15$}\label{F8A}
\end{subfigure}
\hfill
\begin{subfigure}[t]{0.32\textwidth}
\includegraphics[scale=0.3,height=2.7cm]{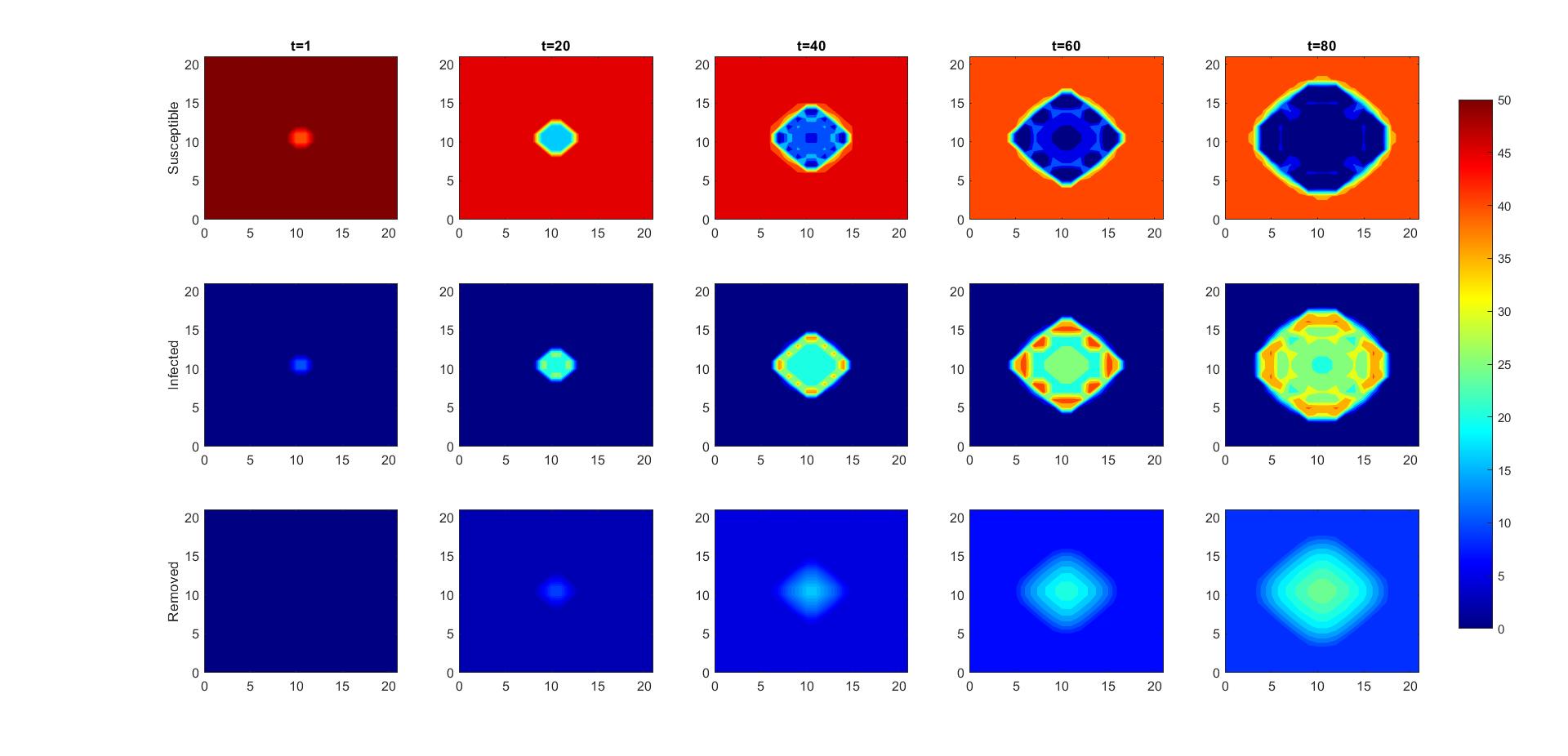}
\caption{For $p=10$}\label{F8B}
\end{subfigure}
\hfill
\begin{subfigure}[t]{0.32\textwidth}
\includegraphics[scale=0.3,height=2.7cm]{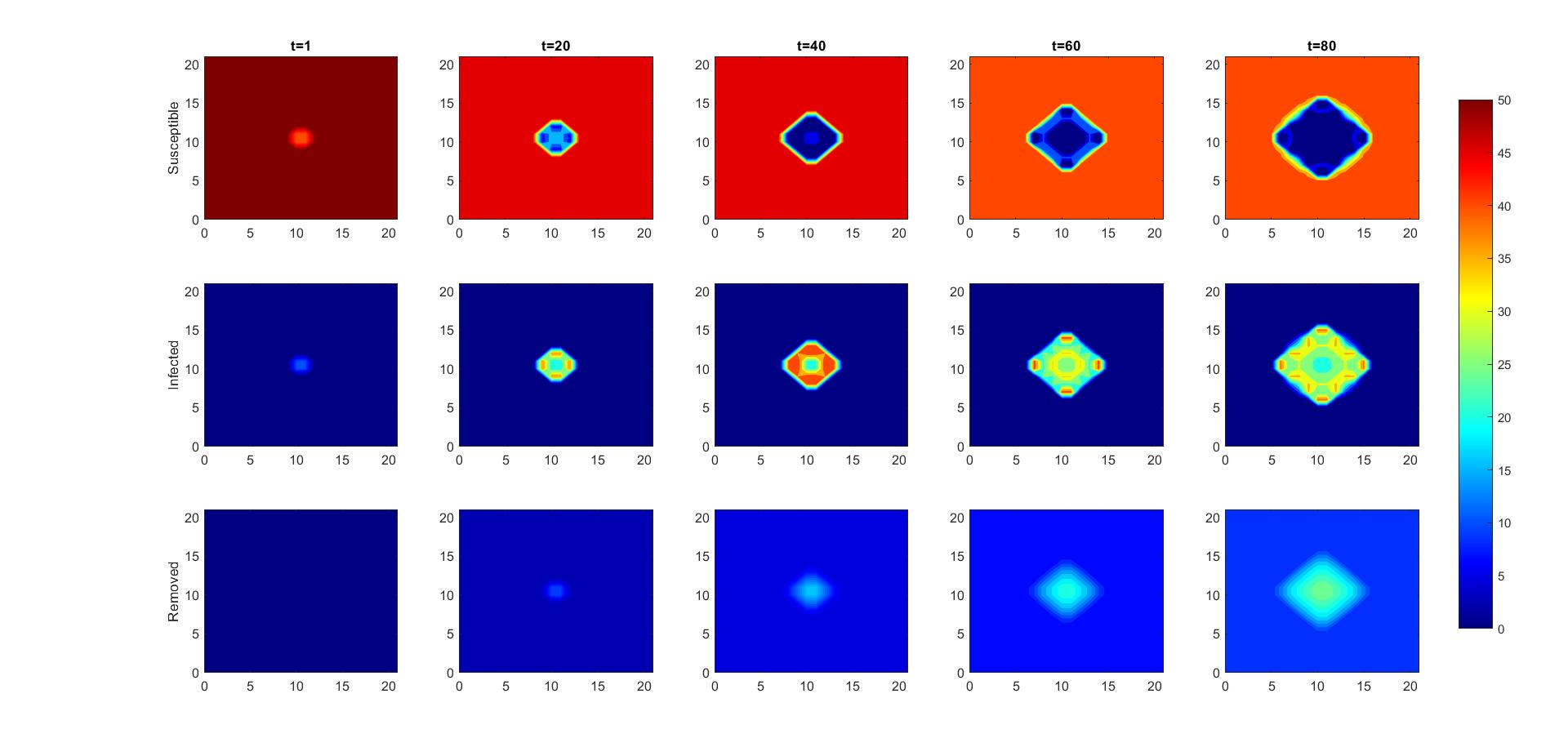}
\caption{For $p=5$}\label{F8C}
\end{subfigure}
\caption{Numerical findings of \eqref{E3.1}--\eqref{E3.3}
with vaccination program for $\alpha=0.9$.}\label{F8}
\vspace{0.2cm}
\begin{subfigure}[t]{0.32\textwidth}
\centering
\href{https://drive.google.com/drive/u/1/folders/1VyyfeWp9svoP6c_pKcNzpkvh7aXdpxO5}{%
\includegraphics[scale=0.28]{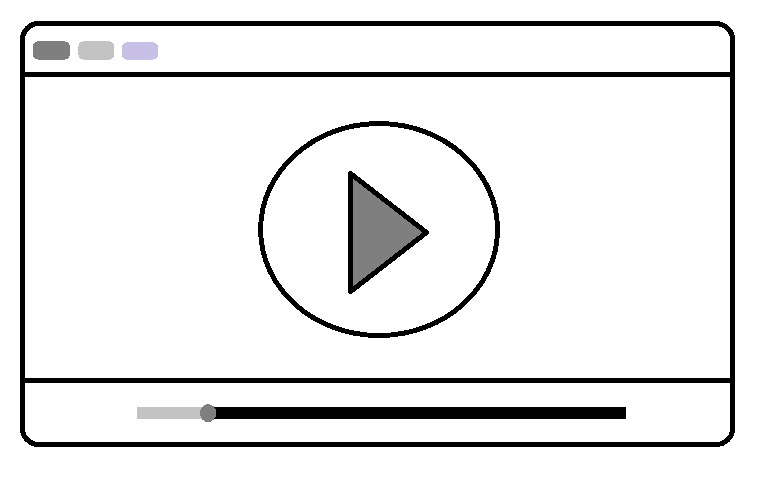}}
\caption{For $\alpha=1$ and $p=15, 10, 5$}\label{F9A}
\end{subfigure}
\hfill
\begin{subfigure}[t]{0.32\textwidth}
\centering
\href{https://drive.google.com/drive/u/1/folders/1W_qInlmDEhVcqk-wxraKydRlq0HbaLJd}{%
\includegraphics[scale=0.28]{System_dynamics.png}}
\caption{For $\alpha=0.95$ and $p=15, 10, 5$}\label{F9B}
\end{subfigure}
\hfill
\begin{subfigure}[t]{0.32\textwidth}
\centering
\href{https://drive.google.com/drive/u/1/folders/1MF9bbTxN5aIRgQgEmgY7-xPlDgC9WfdM}{%
\includegraphics[scale=0.28]{System_dynamics.png}}
\caption{For $\alpha=0.9$ and $p=15, 10, 5$}\label{F9C}
\end{subfigure}
\caption{System dynamics (Videos) with vaccination strategy in 80 days.}\label{F9}
\end{sidewaysfigure}

\newpage


\section{Conclusion}
\label{S8}

In this research work, we have introduced a new innovative application 
of spatiotemporal models within the framework of optimal control theory. 
Our approach involves the modeling of interactions among the three compartments 
through a system of fractional equations, utilizing the Caputo fractional 
derivative and the $p$-Laplacian operator.
We claim that our study has the potential to yield more realistic models 
for the spread of diseases in specific scenarios. One of the key contributions 
is the establishment of the existence and uniqueness of a solution for our 
biological system, as well as the derivation of an optimal control program.
We have characterized the optimal control by making use of the associated state 
and adjoint variables, shedding light on the effective management of the disease spread.
Moreover, as part of our illustrative applications, we have conducted extensive 
simulations to observe how infectious diseases propagate and how they can be 
controlled. The results of these simulations clearly demonstrate that an increase 
in the value of $p$ or the choice of integer values for $\alpha$ 
leads to a more rapid spread of the disease.
Notably, our research highlights that the implementation of a vaccination 
strategy is highly successful in controlling the spread of infections during 
the designated time frame. These findings are of considerable significance 
for public health and epidemiological research, as they contribute to our 
understanding of how different parameters influence the dynamics of infectious diseases.

In this study, we focused on the SIR model as a foundational framework 
to investigate the impact of fractional operators and the $p$-Laplacian 
within a reaction-diffusion system. While the SIR model is widely regarded 
as a benchmark for studying epidemic dynamics, we acknowledge that more 
complex models, such as the SEIR/SIQR models or multi-compartmental systems, 
may capture additional epidemiological features. Incorporating such models 
could provide deeper insights into specific scenarios, such as latent 
infection stages or age-structured populations. Extending our analysis 
to these more intricate frameworks would be a valuable direction 
for future research. It could further demonstrate the versatility 
and applicability of fractional reaction-diffusion systems in epidemiology.


\section*{Acknowledgments}

The authors express their appreciation to the reviewers 
for their valuable, constructive comments, and suggestions.


\section*{Declarations}


\subsection*{Funding}

This work is part of Zinihi's PhD thesis done under the financial support of
CNRST as part of the PASS program. Torres was supported by 
Funda\c{c}\~{a}o para a Ci\^{e}ncia e a Tecnologia (FCT),
within project number UIDB/04106/2020 (CIDMA).


\subsection*{Data availability}

All information analyzed or generated, which would support the results
of this work are available in this article. No data was used
for the research described in the article.


\subsection*{Conflict of interest}

The authors declare that there are no problems or conflicts
of interest between them that may affect the study in this paper



\medskip

Received October 2024; revised January 2025; early access February 2025.

\medskip

\end{document}